\documentclass[oneside,leqno,11pt]{amsart}
\usepackage[marginratio=1:1]{geometry}
\usepackage{setspace}\onehalfspacing\raggedbottom
\usepackage{mathpazo,bm}
\usepackage{eucal}
\usepackage{mydefs}

\usepackage[all]{xy}
\newcommand{\sammatrix}[1]{%
  \left[\vcenter{\xymatrix@1@!0@=.6cm@M=0pt{#1}}\right]%
}

\title[Samuel Coskey: Isomorphism and quasi-isomorphism]{The
  classification of torsion-free abelian groups of finite rank up to
  isomorphism and up to quasi-isomorphism}
\author{Samuel Coskey}
\thanks{This is a part of the author's doctoral thesis, which was written
under the supervision of Simon Thomas.}
\thanks{This work was partially supported by NSF grant DMS 0600940}

\begin{document}
\begin{abstract}
  The isomorphism and quasi-isomorphism relations on the $p$-local
  torsion-free abelian groups of rank $n\geq3$ are incomparable with
  respect to Borel reducibility.
\end{abstract}

\maketitle

\section{Introduction}

This paper extends some recent work (by Hjorth, Kechris, Adams and
Thomas) on the complexity of the classification problem for
torsion-free abelian groups of finite rank.  In 1937, Baer solved the
classification problem for torsion-free abelian groups of rank $1$.
The rank $2$ groups resisted satisfactory classification for sixty
years, after which Hjorth \cite{hjorth} used the theory of countable
Borel equivalence relations to prove that the classification problem
for rank $2$ torsion-free abelian groups is genuinely more complex
than that for rank $1$ torsion-free abelian groups.  Building upon the
work of Adams-Kechris \cite{adamskechris}, Thomas \cite{torsionfree}
later proved that the complexity of the classification problem for
torsion-free abelian groups of rank $n$ strictly increases with $n$.

As a stepping stone to this result, Thomas \cite{torsionfree} proved
that the classification problem for torsion-free abelian groups of
rank $n$ up to \emph{quasi-isomorphism} strictly increases in
complexity with $n$.  Here, $A$ and $B$ are said to be
quasi-isomorphic iff $A$ is commensurable with an isomorphic copy of
$B$.  This left open the question of which of the two classification
problems for torsion-free abelian groups of fixed rank $n$ is more
complex: that up to isomorphism or that up to quasi-isomorphism.  In
this paper, we prove that if $n\geq3$ then the two problems have
\emph{incomparable} complexities.

In order to state these results more formally, we must use the
terminology of Borel equivalence relations.  The idea, due to
Friedman-Stanley \cite{friedman} and Hjorth-Kechris
\cite{hjorthkechris}, is that often a ``classification problem'' may
be regarded as an equivalence relation on a standard Borel space.  (A
standard Borel space is a separable, completely metrizable space
equipped only with its $\sigma$-algebra of Borel sets.)  For instance,
any torsion-free abelian group of rank $n$ is isomorphic to a subgroup
of $\QQ^n$.  Hence, any torsion-free abelian group of rank $n$ can be
presented as an element of the standard Borel space $R(n)$ of all
subgroups of $\QQ^n$ that contain a basis of $\QQ^n$.  Studying the
complexity of the classification problem for torsion-free abelian
groups of rank $n$ thus amounts to studying the complexity of the
isomorphism relation $\oiso_n$ on $R(n)$.

The equivalence relations $\oiso_n$ lie in the class of countable
Borel equivalence relations, which we now describe.  An equivalence
relation $E$ on a standard Borel space $X$ is called \emph{Borel} iff
it is a Borel subset of the product space $X\times X$, and it is
called \emph{countable} iff all of its equivalence classes are
countable.  If $\Gamma$ is a countable group of Borel bijections of
$X$, then the corresponding \emph{orbit equivalence relation} defined
by
\[x\mathrel{E_\Gamma}y\iff x,y\textrm{ lie in the same $\Gamma$-orbit}
\]
is easily seen to be a countable Borel equivalence relation.  For
instance, it is easy to verify that subgroups $A,B\leq\QQ^n$ are
isomorphic iff there exists $g\in GL_n(\QQ)$ such that $B=g(A)$.
Hence, $\oiso_n$ is the orbit equivalence relation induced by the
action of $GL_n(\QQ)$ on $R(n)$, and thus it is a countable Borel
equivalence relation.

In fact, by Feldman and Moore \cite{feldmanmoore}, \emph{any}
countable Borel equivalence relation on a standard Borel space $X$
arises as the orbit equivalence relation induced by a Borel action of
a countable group $\Gamma$ on $X$.  We remark that neither the group
$\Gamma$ nor its action is canonically determined by $E_\Gamma$; the
case of $\oiso_n$ is special in the sense that there is a natural
group action which induces it.

We now discuss how to compare the complexity of two equivalence
relations.  If $E,F$ are equivalence relations on the standard Borel
spaces $X,Y$, respectively, then we write $E\leq_BF$ and say that $E$
is \emph{Borel reducible} to $F$ iff there exists a Borel function
$f:X\rightarrow Y$ such that for all $x,x'\in X$,
\[x\mathrel{E}x'\iff f(x)\mathrel{F}f(x').
\]
The relationship $E\leq_BF$ means that elements of $X$ can be
explicitly classified up to $E$ using invariants from the quotient
space $Y/F$, considered with its quotient Borel structure.
Additionally, $E\leq_BF$ implies that structurally, $X/E$ is a simpler
space of invariants than $Y/F$.

Extending this notation in the obvious fashion, we write $E\sim_BF$
when both $E\leq_BF$ and $F\leq_BE$, we write $E<_BF$ when both
$E\leq_BF$ and $E\not\sim_BF$, and we write $E\perp_BF$ when both
$E\not\leq_BF$ and $F\not\leq_BE$.  Returning to torsion-free abelian
groups, we can now state Hjorth's and Thomas's aforementioned theorems
together as:
\[\cong_1\;\;<_B\;\;\cong_2\;\;<_B\;\;\cong_3\;\;<_B\;\;\cdots\;\;<_B\;\;\oiso_n\;\;<_B\;\;\cdots
\]
This was the first naturally occurring example of an infinite
$\leq_B$-chain.  Shortly after this was found, Thomas found an
infinite $\leq_B$-antichain, again consisting of the isomorphism
relations on various spaces of torsion-free abelian groups of finite
rank.  If $p$ is prime, the abelian group $A$ is said to be
\emph{$p$-local} iff it is infinitely $q$-divisible for every prime
$q\neq p$.  Let $R(n,p)$ denote the subspace of $R(n)$ consisting of
just the $p$-local torsion-free abelian groups of rank $n$, and let
$\oiso_{n,p}$ denote the restriction of $\iso_n$ to $R(n,p)$.
Thomas's theorem says that if $n\geq3$ and $p,q$ are distinct primes,
then $\oiso_{n,p}$ is Borel incomparable with $\oiso_{n,q}$.  (This
was later extended to include the case $n=2$ by Hjorth-Thomas
\cite{hjorththomas}.)

In our comparison of isomorphism and quasi-isomorphism, we shall
consider only the $p$-local groups.  It will actually be necessary to
restrict our attention to a slightly smaller space, that of $p$-local
torsion-free abelian groups of fixed \emph{divisible rank}.  Here, the
divisible rank of a (finite rank) torsion-free abelian group $A$ is
defined as the maximum $k$ of the ranks of the divisible quotients of
$A$.  We let $R(n,p,k)$ denote the subspace of $R(n,p)$ consisting of
just the $p$-local torsion-free abelian groups of rank $n$ and of
divisible rank $k$.  Let $\oiso_{n,p}^k$ denote the restriction of
$\iso_n$ to $R(n,p,k)$.

\begin{thma}
  Let $n\geq 3$ and $p$ a prime, and suppose that $k,l<n$ and $k\neq
  l$.  Then $\oiso_{n,p}^k$ is Borel incomparable with
  $\oiso_{n,p}^l$.
\end{thma}

We now turn to a comparison of the isomorphism and quasi-isomorphism
relations on $R(n,p,k)$.  Recall that torsion-free abelian groups
$A,B\leq\QQ^n$ are said to be \emph{quasi-isomorphic} iff $A$ is
commensurable with an isomorphic copy of $B$ (\emph{i.e.}, there
exists $B'\iso B$ such that $A\cap B'$ has finite index in $A$ and in
$B'$).  Let $\oqiso_{n,p}^k$ denote the quasi-isomorphism equivalence
relation on $R(n,p,k)$.  Thomas found the quasi-isomorphism relation
easier to work with in \cite{torsionfree}, for reasons which will
become clear later on in this paper.  However, the next theorem shows
that the classification of ($p$-local) torsion-free abelian groups up
to quasi-isomorphism is not simpler than that up to isomorphism.

\begin{thmb}
  If $1\leq k\leq n-2$, then $\oiso_{n,p}^k$ is Borel incomparable
  with $\oqiso_{n,p}^k$.
\end{thmb}

It should be noted that by Theorem 4.4 of \cite{plocal}, when $k=n-1$
the notion of quasi-isomorphism \emph{coincides} with that of
isomorphism.

It follows easily from Theorems A and B, together with \cite[Theorem
4.7]{plocal} that for $n\geq 3$, the isomorphism and quasi-isomorphism
relations on the space of all \emph{local} (that is, $p$-local for
some $p$) torsion-free abelian groups of rank $n$ are also
incomparable.

\begin{conj}
  For $n\geq 3$, the isomorphism and quasi-isomorphism relations on
  the space $R(n)$ of all torsion-free abelian groups of rank $n$ are
  Borel incomparable.
\end{conj}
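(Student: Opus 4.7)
The plan is to lift the incomparability of Theorem~B from the $p$-local subspace $R(n,p,k)$ to the full space $R(n)$, using the Borel inclusion $R(n,p,k)\hookrightarrow R(n)$ together with a ``localization'' map in the other direction.  First, I would suppose toward contradiction that $f\colon R(n)\to R(n)$ is a Borel reduction of $\oiso_n$ to $\oqiso_n$, fix a prime $p$ and $k$ with $1\leq k\leq n-2$, and observe that the restriction of $f$ to $R(n,p,k)$ is a Borel reduction of $\oiso_{n,p}^k$ into $\oqiso_n$ on the image $f(R(n,p,k))$.  The natural next step is to apply $p$-localization $A\mapsto A_{(p)}:=A\otimes_{\ZZ}\ZZ_{(p)}\leq\QQ^n$, which is Borel and preserves quasi-isomorphism.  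However, localization is \emph{not} a Borel reduction of $\oqiso_n$ into $\oqiso_{n,p}$, because non-quasi-isomorphic groups may become quasi-isomorphic after localizing away from the primes at which they differ.

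To get around this obstacle, I would pass to all primes simultaneously.  By the classical local-global principle for quasi-isomorphism of finite-rank torsion-free abelian groups, the map $A\mapsto(A_{(q)})_{q\text{ prime}}$ is a Borel reduction of $\oqiso_n$ into the product $\prod_q\oqiso_{n,q}$.  Composing with $f$ yields a family of Borel homomorphisms $f_q\colon R(n,p,k)\to R(n,q)$ from $\oiso_{n,p}^k$ to $\oqiso_{n,q}$ which, jointly over all primes $q$, separate the $\oiso_{n,p}^k$-classes.  The strategy is then to employ an appropriate ergodic measure on $R(n,p,k)$ together with the cocycle superrigidity techniques underlying the proofs of Theorem~B and the earlier local incomparability results, in order to show that essentially all of the separating information must already be carried by the single coordinate $q=p$.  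Combined with a standard Borel decomposition to land in the divisible-rank-$k$ fibre, this would yield $\oiso_{n,p}^k\leq_B\oqiso_{n,p}^k$, contradicting Theorem~B.  A symmetric argument would handle the converse reduction $\oqiso_n\leq_B\oiso_n$, using that localization preserves isomorphism as well.

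The hard part will be this ``concentration'' step: proving that for $q\neq p$, any Borel homomorphism $R(n,p,k)\to R(n,q)$ from $\oiso_{n,p}^k$ to $\oqiso_{n,q}$ must collapse $\oiso_{n,p}^k$-classes on a conull set.  Such a rigidity statement must contend with the mismatch between the natural $SL_n(\ZZ_p)$-action on the source and the $SL_n(\ZZ_q)$-action on the target, which are essentially unrelated as topological groups.  Applying Zimmer-style cocycle superrigidity in this mixed $p$-$q$ setting appears to require rigidity inputs beyond those used to establish Theorem~B, and it is for this reason that the passage from the local to the global case remains only a conjecture.
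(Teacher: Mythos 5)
You are attempting the one statement in the paper that is explicitly labelled a conjecture: the paper gives no proof of it, and the closest it comes is the remark that Theorems A and B together with \cite[Theorem 4.7]{plocal} yield the incomparability for the space of groups that are $p$-local for \emph{some} prime $p$. So there is no argument in the paper to compare yours against, and your write-up is, as you yourself say in the final paragraph, a strategy sketch rather than a proof: the ``concentration'' step is exactly the open difficulty, and nothing in the paper supplies the mixed $p$--$q$ rigidity it would need, since Ioana's theorem is only ever applied here to homomorphisms landing in orbit equivalence relations of countable groups acting on Grassmann spaces over the \emph{same} prime.

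Beyond the gap you acknowledge, one step is wrong as stated: there is no ``classical local-global principle'' making $A\mapsto(A_{(q)})_q$, with $A_{(q)}=A\otimes\ZZ_{(q)}$, a Borel reduction of quasi-isomorphism on $R(n)$ into $\prod_q\oqiso_{n,q}$. It is a Borel homomorphism, but it identifies distinct quasi-isomorphism classes. For instance, let $B_0\leq\QQ$ be the subgroup generated by the elements $1/q$ for $q$ prime. For every prime $q$ one has $(B_0)_{(q)}=q^{-1}\ZZ_{(q)}\iso\ZZ_{(q)}$, so $A=\ZZ^n$ and $B=\ZZ^{n-1}\oplus B_0$ have isomorphic (a fortiori quasi-isomorphic) localizations at every prime; yet $A$ and $B$ are not quasi-isomorphic, because $\mathrm{Hom}(B_0,\ZZ^n)=0$ (the image of $1$ would have to be divisible by every prime), so there is no embedding of $B$ into $A$ with finite cokernel; in rank-one terms, the types $(0,0,\dots)$ and $(1,1,\dots)$ differ at infinitely many primes. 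Consequently the coordinate maps $f_q$ obtained by composing a hypothetical reduction with localization need not jointly separate $\oiso_{n,p}^k$-classes, and the scheme already fails before the concentration step; the same example defeats the symmetric argument for the reduction of quasi-isomorphism to isomorphism. Note also that $\prod_q\oqiso_{n,q}$ is not a countable Borel equivalence relation, so the $F$-ergodicity framework of Section 2 does not apply to it directly. The statement therefore remains, as in the paper, an open conjecture.
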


This paper is organized as follows.  In the second section, we
introduce ergodic theory and homogeneous spaces.  Our main example is
the $k$-Grassmann space $Gr_k\QQ_p^n$ of $k$-dimensional subspaces of
the $n$-dimensional vector space $\QQ_p^n$ over the $p$-adics.  We
then prove Theorem \ref{thm_affine}, which gives a characterization of
certain action-preserving maps between the $k$-Grassmann spaces.  In
the third section, we state a cocycle superrigidity theorem of Adrian
Ioana, and derive Corollary \ref{cor_rigidity}, which roughly states
that a homomorphism of $SL_n(\ZZ)$-orbits between Grassmann spaces is
a slight perturbation of an action-preserving map.  We then combine
\ref{thm_affine} and \ref{cor_rigidity} to prove the auxiliary result
that for $l\neq n-k$, the orbit equivalence relations induced by the
action of $GL_n(\QQ)$ on $Gr_k\QQ_p^n$ and on $Gr_l\QQ_p^n$ are Borel
incomparable.  In the fourth section, we use the Kurosh-Malcev
completion of torsion-free abelian groups $A\leq\QQ^n$ to establish a
connection (Lemma \ref{lem_containments}) between the space of
$p$-local torsion-free abelian groups and the Grassmann spaces.  In
the last section, we put \ref{thm_affine}, \ref{cor_rigidity}, and
\ref{lem_containments} together to prove Theorems A and B.

\section{Ergodic theory of homogeneous spaces}

In this section, we define the notion of ergodicity of a
measure-preserving action, which plays an essential role in the theory
of countable Borel equivalence relations.  We then consider the case
of countable groups $\Gamma$ acting on homogeneous spaces for compact
$K$ such that $\Gamma\leq K$ (by ``homogeneous,'' we simply mean that
$K$ acts transitively).  As an example, we introduce the Grassmann
space of $k$-dimensional subspaces of $\QQ_p^n$.  The material of this
section is self contained, but we shall see later that there is a
close relationship between Grassmann spaces and spaces of $p$-local
torsion-free abelian groups.

\subsection*{Ergodicity and Borel reductions}

Let $\Gamma$ be a countable group acting in a Borel fashion on the
standard Borel space $X$.  If $X$ carries a Borel probability measure
$\mu$, then we write $\Gamma\actson(X,\mu)$ to indicate that $\Gamma$
acts on $X$ in a $\mu$-preserving fashion.  (When $\mu$ is clear from
context, we often write $\Gamma\actson X$.)  As before, we let
$E_\Gamma$ denote the orbit equivalence relation on $X$ induced by the
action of $\Gamma$.  We say that the action $\Gamma\actson(X,\mu)$ is
\emph{ergodic} iff every $\Gamma$-invariant subset of $X$ is null or
conull for $\mu$.  We shall use the characterization that
$\Gamma\actson(X,\mu)$ is ergodic iff for every $\Gamma$-invariant
function $f:X\rightarrow Y$ into a standard Borel space $Y$, there
exists a conull $A\subset X$ such that $f|_A$ is a constant function.

This characterization leads to an important generalization of
ergodicity which will arise in our arguments.  First, if $E,F$ be
equivalence relations on standard Borel spaces $X,Y$, we define that a
function $f:X\rightarrow Y$ is a \emph{Borel homomorphism} $f$ from
$E$ to $F$ iff
\[x\mathrel{E}y\implies f(x)\mathrel{F}f(y).
\]
(This corresponds to using $Y/F$ as a space of \emph{incomplete}
invariants for the $E$-classification problem on $X$.)  By the last
paragraph, $\Gamma\actson(X,\mu)$ is ergodic iff every Borel
homomorphism from $E_\Gamma$ to $Id_Y$ is constant on a conull set,
where $Id_Y$ represents the \emph{equality} relation on the standard
Borel space $Y$.  More generally, if $F$ is a Borel equivalence
relation on $Y$, then we say $\Gamma\actson (X,\mu)$ is
\emph{$F$-ergodic} iff for every Borel homomorphism $f$ from
$E_\Gamma$ to $F$, there exists a $\mu$-conull subset $A\subset X$
such that $f(A)$ is contained in a single $F$-class.

A countable-to-one Borel homomorphism from $E$ to $F$ is called a
\emph{weak Borel reduction} from $E$ to $F$.  We write $E\leq_B^wF$ if
there exists a weak Borel reduction from $E$ to $F$.  We shall use the
fact that if $E,F$ are countable Borel equivalence relations and $\mu$
is nonatomic, then
\begin{equation}
  \label{eqn_Fergodicity}
  E\textrm{ is }F\textrm{-ergodic}\implies
  E\not\leq_B^wF\implies E\not\leq_BF.
\end{equation}
For the first implication, suppose that $E$ is $F$-ergodic and $f$ is
a weak Borel reduction from $E$ to $F$.  Then there exists a conull
subset $M\subset\dom E$ such that $f(M)$ is contained in a single
$F$-class.  Since $E$ and $F$ are countable and $f$ is
countable-to-one, it follows that $M$ is a countable conull set,
contradicting the fact that $\mu$ is nonatomic.  The second
implication of \eqref{eqn_Fergodicity} is clear from the definitions.

\subsection*{Ergodic components}

If $\Gamma\actson (X,\mu)$ is ergodic and $\Lambda\leq\Gamma$ is a
subgroup of finite index, then there exists a partition
$X=Z_1\sqcup\cdots\sqcup Z_N$ of $X$ into $\Lambda$-invariant subsets
such that for each $i$:
\begin{itemize}
\item $\mu(Z_i)>0$, and
\item $\Lambda\actson(Z_i,\mu_i)$ is ergodic, where $\mu_i$ denotes
  the (normalized) probability measure induced on $Z_i$ by $\mu$.
\end{itemize}
The $\Lambda$-spaces $Z_i$ are called the \emph{ergodic components}
for the action $\Gamma\actson(X,\mu)$.  The set of ergodic components
is determined uniquely up to null sets by the inclusion of $\Lambda$
into $\Gamma$ and the action of $\Gamma$ on $(X,\mu)$.

\subsection*{Homogeneous spaces}

Let $K$ be a compact, second countable group.  If $K$ acts
continuously and transitively on the standard Borel space $X$, then
$X$ is said to be a \emph{homogeneous space} for $K$.  Every
homogeneous space for $K$ is thus in $K$-preserving bijection with
$K/L$ for some closed subgroup $L\leq K$.  Since $K/L$ carries a
unique $K$-invariant probability measure (the projection of the Haar
probability measure on $K$), it follows that $X$ does as well.  Now if
$\Gamma\leq K$ is a countable dense subgroup, then the action of
$\Gamma$ on $K/L$ clearly preserves the Haar measure, and moreover it
is uniquely ergodic with respect to the Haar measure.  (Here, the
action $\Gamma\actson Y$ is said to be \emph{uniquely ergodic} iff
there is a unique $\Gamma$-invariant probability measure on $Y$.)  It
is easy to see that unique ergodicity implies ergodicity, and so
$\Gamma\actson K/L$ is ergodic.

Next, suppose that $\Lambda\leq\Gamma$ is a subgroup of finite index.
Then the ergodic components for the action of $\Lambda$ are precisely
the orbits of $\bar\Lambda$ (the closure in $K$) on $K/L$ and each
ergodic component is again a homogeneous space for the compact group
$K_0=\bar\Lambda$.  If $\Lambda\trianglelefteq\Gamma$ is a
\emph{normal} subgroup of finite index, then $\Gamma$ acts as a
transitive permutation group on the $\bar\Lambda$-orbits, \emph{i.e.},
on the ergodic components for $\Lambda$.  (For proofs of the last few
claims, see \cite[Lemma 2.2]{super}.)

\subsection*{Example: Grassmann spaces}

Let $n$ be a natural number and $p$ a prime.  Denote by $\QQ_p^n$ the
canonical $n$-dimensional vector space over the field of $p$-adic
numbers.  Then the $k$-Grassmann space of $\QQ_p^n$, denoted
$Gr_k\QQ_p^n$, is the set of $k$-dimensional subspaces of $\QQ_p^n$.
Since the compact group $SL_n(\ZZ_p)$ acts transitively on
$Gr_k\QQ_p^n$ \cite[Proposition 6.1]{super} we can view $Gr_k\QQ_p^n$
as a homogeneous $SL_n(\ZZ_p)$-space.  Accordingly, it carries a
corresponding Haar probability measure and the dense subgroup
$SL_n(\ZZ)\leq SL_n(\ZZ_p)$ acts (uniquely) ergodically on
$Gr_k\QQ_p^n$.

We now describe the ``principle congruence components'' of the
$k$-Grassmann space.  Recall that for any natural number $m$, the
principal congruence subgroup $\Gamma_m\trianglelefteq SL_n(\ZZ)$ is
defined by
\[\Gamma_m=\ker[SL_n(\ZZ)\rightarrow SL_n(\ZZ/m\ZZ)]
\]
where the map on the right-hand side is the canonical surjection.  It
is easily seen that the closure in $SL_n(\ZZ_p)$ of $\Gamma_m$ is
exactly $K_m$, where
\[K_m=\ker[SL_n(\ZZ_p)\rightarrow SL_n(\ZZ_p/m\ZZ_p)].
\]
Hence, the ergodic components of $Gr_k\QQ_p^n$ corresponding to the
action of $\Gamma_m$ are precisely the $K_m$-orbits (for example, see
\cite[Lemma 2.2]{super}).  We call these the $m^\textrm{th}$ principle
congruence components of the $k$-Grassmann space.

For example, any $V$ in the $K_{p^t}$ orbit of
$V_0:=\QQ_pe_1\oplus\cdots\oplus\QQ_pe_k$ can be written as the column
space of a matrix $\bracks av$, where $a$ is congruent to the $k\times
k$ identity matrix $I_k$ modulo $p^t$, and $v$ is congruent to $0$
modulo $p^t$.  Since $a$ is clearly invertible, one can use column
operations to suppose that $a=I_k$.  So we have
\begin{equation}
  \label{eqn_Z0}
  (K_{p^t})V_0=\left\{\col\bracks{I_k}{v}:p^t\mid v\right\}
\end{equation}
where $p^t\mid v$ means that for each entry $x$ of $v$, we have that
$x/p^t$ lies in $\ZZ_p$.

Recall that for $n\geq3$, $SL_n(\ZZ)$ has the \emph{congruence
  subgroup property}, meaning that every subgroup of finite index
contains a principle congruence subgroup.  With this, it is easy to
derive the following:

\begin{prop}
  \label{prop_congruence}
  If $n\geq3$, any ergodic component for the action of a subgroup
  $\Gamma\leq SL_n(\ZZ_p)$ of finite index on $Gr_k\QQ_p^n$ contains a
  principle congruence component.
\end{prop}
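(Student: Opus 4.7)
The plan is to reduce the proposition to two facts: that the ergodic components for $\Gamma\actson Gr_k\QQ_p^n$ are exactly the orbits of the closure $\bar\Gamma\leq SL_n(\ZZ_p)$, and that $\bar\Gamma$ is an open subgroup of $SL_n(\ZZ_p)$. Granting both, the principal congruence subgroups $\{K_{p^t}\}_{t\geq 0}$ form a neighborhood basis of the identity in the $p$-adic topology on $SL_n(\ZZ_p)$, so $\bar\Gamma$ contains some $K_{p^t}$; each $\bar\Gamma$-orbit is then a union of $K_{p^t}$-orbits and in particular contains a principal congruence component, as claimed.

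For the first fact I would apply the homogeneous-space formalism developed earlier in this section, now with $\bar\Gamma$ itself playing the role of a compact closure. Since $\bar\Gamma$ is a closed, hence compact, subgroup of $SL_n(\ZZ_p)$, each $\bar\Gamma$-orbit $\mathcal{O}\subseteq Gr_k\QQ_p^n$ is itself a homogeneous space for $\bar\Gamma$ and so carries a unique $\bar\Gamma$-invariant probability measure. Because $\Gamma$ is dense in $\bar\Gamma$ by definition of closure, the restricted action $\Gamma\actson\mathcal{O}$ is uniquely ergodic and in particular ergodic. Since $\bar\Gamma$ has finite index in $SL_n(\ZZ_p)$ there are only finitely many such orbits, and together they partition $Gr_k\QQ_p^n$ into finitely many $\Gamma$-invariant positive-measure ergodic pieces, so they must coincide with the ergodic components of the $\Gamma$-action.

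For the second fact I would observe that $\bar\Gamma\supseteq\Gamma$ has finite index in $SL_n(\ZZ_p)$, and that a closed finite-index subgroup of any topological group is automatically open: its complement is a finite union of cosets of a closed set, hence closed. So $\bar\Gamma$ is an open neighborhood of the identity, to which the neighborhood-basis statement applies.

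The only substantive ingredient is the identification of the ergodic components as $\bar\Gamma$-orbits, which adapts \cite[Lemma 2.2]{super} to the situation in which $\Gamma$ itself is a finite-index (possibly non-closed) subgroup of the ambient compact group rather than a finite-index subgroup of a dense countable one; this is where I would expect any technical obstacle to lie. The remaining steps are purely $p$-adic-topological. Note that the congruence subgroup property for $SL_n(\ZZ)$ recalled just before the proposition is the integral analogue of the neighborhood-basis fact invoked here, but is not itself needed in the argument above; nor, strictly speaking, is the hypothesis $n\geq 3$, which is retained to match the setting of later applications.
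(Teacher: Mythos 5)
Your argument is sound, but it takes a genuinely different route from the one the paper intends. The proposition is stated immediately after recalling the congruence subgroup property of $SL_n(\ZZ)$ precisely because the paper's (implicit) derivation runs through it: in the intended reading --- and in every later application --- $\Gamma$ is a finite-index subgroup of $SL_n(\ZZ)$ (the ``$SL_n(\ZZ_p)$'' in the statement is best read as a slip), CSP provides a principal congruence subgroup $\Gamma_m\leq\Gamma$, and since the ergodic components for $\Gamma_m$ are exactly the $K_m$-orbits, i.e.\ the principal congruence components, every ergodic component for $\Gamma$ is a union of them. You instead take the statement at face value and argue purely $p$-adically: the closure $\bar\Gamma$ is a closed finite-index, hence open, subgroup of $SL_n(\ZZ_p)$, so it contains some $K_{p^t}$, and the ergodic components are the $\bar\Gamma$-orbits, each a union of $K_{p^t}$-orbits. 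This works, and the one substantive step you flag --- ergodic components $=$ $\bar\Gamma$-orbits --- needs no real adaptation under the intended reading, since for finite-index $\Gamma\leq SL_n(\ZZ)$ it is exactly \cite[Lemma 2.2]{super}, with $\bar\Gamma$ of finite index in $SL_n(\ZZ_p)$ because $SL_n(\ZZ_p)=\bigcup_i\gamma_i\bar\Gamma$. So your proof is more elementary in that it avoids CSP and the hypothesis $n\geq3$ for the literal statement.

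One caution about your closing remark, though: CSP is not gratuitous in the paper. The way Proposition \ref{prop_congruence} is invoked in Section 5 is stronger than its literal statement --- there the acting group $\Gamma_0\leq SL_n(\ZZ)$ is itself replaced by a principal congruence subgroup contained in it (so that the permutation group homomorphism $(\phi,f)$ can be restricted), and that containment is exactly the congruence subgroup property, which genuinely requires $n\geq3$ and fails for $SL_2(\ZZ)$. Your closure argument yields $\bar\Gamma\supseteq K_{p^t}$ but gives no control of $\Gamma$ itself, e.g.\ it does not yield $\Gamma\supseteq\Gamma_{p^t}$, so it proves the proposition as stated but could not substitute for CSP at that later point.
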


We close this section with a characterization of the action-preserving
maps between ergodic components of Grassmann spaces.  In what follows,
when $\Gamma\actson X$ and $\Lambda\actson Y$, we shall use the term
\emph{permutation group homomorphism} for a pair $(\phi,f)$ where
$\phi:\Gamma\rightarrow\Lambda$ is a group homomorphism and
$f:X\rightarrow Y$ is a Borel map satisfying $f(\gamma
x)=\phi(\gamma)f(x)$ for all $\gamma\in\Gamma$ and $x\in X$.

\begin{thm}
  \label{thm_affine}
  Let $n\geq 3$ and suppose that $k,l\leq n$.  Let $\Gamma_0,\Gamma_1$
  be subgroups of $SL_n(\ZZ)$ of finite index, $X_0$ an ergodic
  component for the action of $\Gamma_0$ on $Gr_k\QQ_p^n$, and $X_1$
  an ergodic component for the action of $\Gamma_1$ on $Gr_l\QQ_p^n$.
  Suppose that:
  \begin{itemize}
  \item $\phi:\Gamma_0\rightarrow\Gamma_1$ is an isomorphism,
  \item $f:X_0\rightarrow X_1$ is a Borel function, and
  \item $(\phi,f):\Gamma_0\actson X_0\longrightarrow\Gamma_1\actson
    X_1$ is a permutation group homomorphism.
  \end{itemize}
  Then $l=k$ or $l=n-k$, and:
  \begin{enumerate}
  \item If $l=k\neq n-k$ then there exists $h\in GL_n(\QQ)$ such that
    $f$ satisfies $f(x)=hx$ for almost every $x\in X_0$.
  \item If $l=n-k\neq k$ then there exists $h\in GL_n(\QQ)$ such that
    $f$ satisfies $f(x)=hx^\bot$ for almost every $x\in X_0$, where
    $x^\bot$ denotes the orthogonal complement of $x$ with respect to
    the usual dot product.
  \item If $l=k=n/2$ then either the conclusion of (a) holds or the
    conclusion of (b) holds.
  \end{enumerate}
\end{thm}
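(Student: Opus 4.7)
The plan has three stages: use algebraic rigidity to force $\phi$ into a standard form, reduce $f$ to a $\Gamma_m$-equivariant Borel map into the right Grassmannian, and use ergodicity of the diagonal action to identify it.

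\emph{Stage 1 (algebraic rigidity of $\phi$).} Since $n\geq 3$, Margulis's superrigidity theorem applied to the arithmetic lattice $\Gamma_0\leq SL_n(\RR)$ (alternatively, the classical computation of $\mathrm{Comm}(SL_n(\ZZ))$) shows that $\phi$ extends, after restriction to a finite-index subgroup $\Lambda\leq\Gamma_0$, to an automorphism of the $\QQ$-algebraic group $SL_n$. By Dieudonn\'e's theorem, every such automorphism is either inner or an inner composed with the outer involution $g\mapsto(g^T)^{-1}$. Hence there exists $h\in GL_n(\QQ)$ such that on $\Lambda$, either $\phi(\gamma)=h\gamma h^{-1}$ (the inner case) or $\phi(\gamma)=h(\gamma^T)^{-1}h^{-1}$ (the outer case). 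This step is the main conceptual input of the proof.

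\emph{Stage 2 (reduction).} Using the identity $(\gamma^T)^{-1}W^\bot=(\gamma W)^\bot$, define $g(V):=h^{-1}f(V)$ in the inner case and $g(V):=(h^{-1}f(V))^\bot$ in the outer case. A direct check shows that $g$ is a Borel $\Lambda$-equivariant map from $X_0$ into $Gr_{k'}\QQ_p^n$, where $k'=l$ (inner) or $k'=n-l$ (outer). By Proposition \ref{prop_congruence} we may further restrict to a principal congruence subgroup $\Gamma_m\leq\Lambda$ and to a principal congruence component $Y_0=K_m\cdot V_0\subseteq X_0$.

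\emph{Stage 3 (classifying $g$, and conclusion).} Let $\tilde\mu=(V\mapsto(V,g(V)))_\ast\mu_0$ be the pushforward of the Haar measure on $Y_0$; this is a probability measure on $Y_0\times Gr_{k'}\QQ_p^n$. Since $g$ is $\Gamma_m$-equivariant, $\tilde\mu$ is $\Gamma_m$-invariant under the diagonal action; because $\Gamma_m$ is dense in the compact group $K_m$, $\tilde\mu$ is then automatically $K_m$-invariant. Decomposing $\tilde\mu$ into $K_m$-ergodic components (each the Haar measure on a $K_m$-orbit) and using that $\tilde\mu$ is concentrated on the graph of a function, the conditional distribution above almost every $V$ must be a Dirac mass; hence $g(V)$ is fixed by $\mathrm{Stab}_{K_m}(V)$ for almost every $V$. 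At $V=V_0$, this stabilizer is $K_m\cap P_{V_0}$, an open (hence Zariski-dense) subgroup of the maximal parabolic $P_{V_0}\leq SL_n(\QQ_p)$ stabilizing $V_0$; a Levi-plus-unipotent-radical calculation shows that the only $P_{V_0}$-fixed subspaces of $\QQ_p^n$ are $0$, $V_0$, and $\QQ_p^n$. Excluding the degenerate cases $k'\in\{0,n\}$, we conclude $k'=k$ and $g(V_0)=V_0$; by $K_m$-invariance of the graph, $g(V)=V$ for almost every $V\in Y_0$. Unwinding: in the inner case $l=k$ and $f(V)=hV$ a.e., giving (a); in the outer case $l=n-k$ and $f(V)=hV^\bot$ a.e., giving (b); when $k=n/2$ both Dieudonn\'e alternatives may occur, giving (c). Stage 3 is a concrete but delicate measure-theoretic computation (requiring care with Zariski density of compact-open subgroups of $p$-adic parabolics); the main obstacle, however, remains Stage 1, since the Dieudonn\'e dichotomy can only be deployed once $\phi$ has been algebraically extended, which requires the substantial rigidity input.
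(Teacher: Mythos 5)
Your proposal is correct in substance and shares the paper's skeleton: Margulis-type rigidity puts $\phi$ in the form $\chi_h$ or $\chi_h\circ(-T)$ with $h\in GL_n(\QQ)$ (this is exactly the paper's Lemma \ref{lem_margulis}), and then a stabilizer argument shows that an equivariant map between Grassmann components must send $V$ to a subspace fixed by a Zariski-dense subgroup of the parabolic stabilizing $V$, which forces $l\in\{k,n-k\}$ and the formula for $f$. Where you genuinely differ is the implementation of the middle step. The paper works on the compact group $K_0=\overline{\Gamma_0}$ itself: it defines $\beta(k)=\Phi(k)^{-1}f(kL_0)$, uses ergodicity of the translation action $\Gamma_0\actson K_0$ to make $\beta$ almost constant, and converts the resulting a.e.\ identity into the containment $\Phi(\stab_{K_0}x)\subset\stab_{K_1}f(x)$, finishing with Proposition \ref{prop_stab} (primitivity of the $SL_n(\QQ_p)$-action, i.e.\ maximality of parabolic stabilizers). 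You instead push Haar measure forward under $V\mapsto(V,g(V))$, upgrade $\Gamma_m$-invariance of this graph measure to $K_m$-invariance by weak* continuity of the measure-stabilizer, and decompose into orbital measures to force the fiberwise conditionals to be Dirac, so that $g(V)$ is $\stab_{K_m}(V)$-fixed; your explicit Levi-plus-unipotent computation that the only $P_{V}$-invariant subspaces are $0$, $V$, $\QQ_p^n$ replaces the appeal to primitivity. Both routes are sound; yours costs some disintegration bookkeeping but makes the final linear-algebra input self-contained, while the paper's $\beta$-map argument is shorter once unique ergodicity of $\Gamma_0\actson K_0$ is in hand.

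One loose end you should close: because you invoke superrigidity only after passing to a finite-index subgroup $\Lambda\leq\Gamma_0$, and you identify $g$ only on a congruence subcomponent $Y_0\subseteq X_0$, your argument as written delivers $f(x)=hx$ (resp.\ $hx^\bot$) only almost everywhere on $Y_0$, whereas the theorem asserts a single $h\in GL_n(\QQ)$ working almost everywhere on the whole $\Gamma_0$-component $X_0$. This is fixable in either of two ways: quote the rigidity statement in the stronger form the paper uses (on all of $\Gamma_0$ one has $\phi=\epsilon\circ\chi_h\circ(-T)^i$ with $\epsilon(\gamma)=\pm\gamma$, and the sign acts trivially on the Grassmannian), or propagate from $Y_0$ across the finitely many $\Gamma_0$-translates covering $X_0$ via $f(\gamma x)=\phi(\gamma)f(x)$ and check that the resulting elements $\phi(\gamma)h\gamma^{-1}$ agree with $h$ up to scalars, which follows since $h^{-1}\phi(\gamma)h\gamma^{-1}$ centralizes a finite-index, hence Zariski-dense, subgroup of $SL_n(\ZZ)$. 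Either patch is a couple of sentences, but without it the stated conclusion ``for almost every $x\in X_0$'' with one $h$ is not quite reached. (Your explicit exclusion of the degenerate dimensions $k'\in\{0,n\}$ is harmless; the paper's Proposition \ref{prop_stab} implicitly assumes the same.)
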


In the proof, we shall make use of the following well-known result.

\begin{lem}
  \label{lem_margulis}
  Let $n\geq 3$ and $\Gamma_0\leq SL_n(\ZZ)$ be a subgroup of finite
  index.  Let $\phi:\Gamma_0\rightarrow SL_n(\ZZ)$ be an injective
  homomorphism.  Then $\phi$ decomposes as
  $\phi=\epsilon\circ\chi_h\circ(-T)^i$ where:
  \begin{itemize}
  \item $\chi_h(g)=h^{-1}gh$ is conjugation by some $h\in GL_n(\QQ)$,
  \item $-T$ is the inverse-transpose map and $i=0$ or $1$, and
  \item $\epsilon$ is an automorphism of $SL_n(\ZZ)$ satisfying
    $\epsilon(\gamma)=\pm\gamma$.
  \end{itemize}
\end{lem}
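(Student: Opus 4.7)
The plan is to invoke Margulis' Superrigidity Theorem. Since $n\geq 3$, the finite-index subgroup $\Gamma_0\leq SL_n(\ZZ)$ is an irreducible arithmetic lattice in the simple Lie group $SL_n(\RR)$, which has $\RR$-rank at least $2$; and $\phi:\Gamma_0\to SL_n(\ZZ)\subset SL_n(\RR)$ is an injective homomorphism into another arithmetic lattice. Margulis' theorem then produces a $\QQ$-rational algebraic homomorphism $\tilde\phi:SL_n\to SL_n$ together with a finite-index subgroup $\Gamma_0'\leq\Gamma_0$ on which $\tilde\phi$ agrees with $\phi$. The Zariski density of $\phi(\Gamma_0)$, required as an input to Margulis' theorem, follows in a standard way from the Margulis Normal Subgroup Theorem together with Borel density.

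The second step is to classify the injective $\QQ$-rational endomorphisms of the connected simple algebraic group $SL_n$. Any such endomorphism is necessarily an automorphism, and for $n\geq 3$ the automorphism group of $SL_n$ is generated by the inner automorphisms together with the graph automorphism $g\mapsto g^{-T}$ coming from the unique non-trivial symmetry of the Dynkin diagram of type $A_{n-1}$. Hence $\tilde\phi(g)=h^{-1}g^{(-T)^i}h$ for some $i\in\{0,1\}$ and some $h\in GL_n(\bar\QQ)$, where $h$ is determined up to a scalar. The $\QQ$-rationality of $\tilde\phi$ then lets us rescale $h$ to lie in $GL_n(\QQ)$.

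Finally I would account for the residual discrepancy by setting $\epsilon:=\phi\circ(\chi_h\circ(-T)^i)^{-1}$. By construction $\epsilon$ is the identity on $(\chi_h\circ(-T)^i)(\Gamma_0')$ and is a homomorphism on $(\chi_h\circ(-T)^i)(\Gamma_0)$. Because $SL_n(\ZZ)$ is perfect for $n\geq 3$ and its center is $\{\pm I\}$, a short cocycle computation shows that the only deviations possible are central signs, so $\epsilon$ extends to an automorphism of $SL_n(\ZZ)$ with $\epsilon(\gamma)\in\{\pm\gamma\}$ for every $\gamma$.

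The main obstacle is invoking Margulis' theorem itself, which is of course a deep result; however, in this arithmetic setting the statement is entirely classical (see, e.g., Margulis' monograph or Zimmer's book) and needs only to be cited. The remaining steps — classifying the algebraic automorphisms of $SL_n$ via the Dynkin diagram, together with the rationality and sign bookkeeping — are routine applications of standard structure theory.
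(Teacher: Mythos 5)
Your overall route is the same as the paper's: use Margulis superrigidity to extend $\phi$ (up to finite index and central signs) to the ambient group, use that the automorphisms of $SL_n$ are the inner ones together with $g\mapsto g^{-T}$, arrange $h\in GL_n(\QQ)$, and absorb the residual discrepancy into $\epsilon$. The one genuine difference is in the packaging, and it buys you something: by invoking the arithmetic/$\QQ$-rational form of superrigidity you get the rationality of $h$ essentially for free from the $\QQ$-rationality of $\tilde\phi$, whereas the paper works with the archimedean statement (extension to an automorphism of $SL_n(\RR)$, or of $PSL_n(\RR)$ when $n$ is even) and must then argue separately that $h$ commensurates $SL_n(\ZZ)$ and can be rescaled into $GL_n(\QQ)$ via the proof of Zimmer's Proposition 6.2.2.

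The weak point is your justification of Zariski density of $\phi(\Gamma_0)$. As stated, ``Margulis' normal subgroup theorem plus Borel density'' does not deliver it: the Borel density theorem applies to lattices, and $\phi(\Gamma_0)$ is not known to be a lattice in its Zariski closure $H$ --- it is merely a discrete subgroup of $H(\RR)$ abstractly isomorphic to $\Gamma_0$. The normal subgroup theorem (or finite abelianization) does let you kill the unipotent radical and any central torus, i.e.\ it shows $H$ is essentially semisimple, but it does not by itself exclude an embedding of $\Gamma_0$ into a proper semisimple subgroup; ruling that out is exactly where more rigidity is needed. This is the point the paper treats explicitly: it cites Margulis IX.5.8 for semisimplicity of the closure, then applies superrigidity to each simple factor and uses simplicity of $SL_n(\RR)$ together with a dimension count to see that the only factor receiving an infinite image is $SL_n(\RR)$ itself, which gives density a posteriori. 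You should either reproduce such an argument or cite a form of superrigidity for finite-dimensional representations in characteristic zero that does not presuppose density of the image. Two smaller points: the center of $SL_n(\ZZ)$ is $\{\pm I\}$ only for $n$ even (for $n$ odd, $\epsilon$ is forced to be the identity), and the sign bookkeeping should be carried out on $\Gamma_0$, whose finite abelianization may be nontrivial; the relevant fact is not perfectness of $SL_n(\ZZ)$ but that the deviation $\gamma\mapsto\phi(\gamma)\,\tilde\phi(\gamma)^{-1}$ centralizes the Zariski-dense image of the finite-index subgroup on which $\phi$ and $\tilde\phi$ agree, and hence takes central values.
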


\begin{proof}[Proof of Lemma~\ref{lem_margulis}]
  We first suppose that $n$ is odd, so $SL_n(\RR)$ is a simple group.
  By \cite[Theorem IX.5.8]{margulis}, the Zariski closure $H$ of
  $\phi(\Gamma_0)$ in $SL_n(\RR)$ is semisimple.  Let
  $\pi_i:H\rightarrow H_i$ denote the projections of $H$ onto its
  simple factors.  Then $\phi_i(\Gamma_0)$ is Zariski dense in $H_i$,
  and so by the Mostow-Margulis superrigidity theorem (see
  \cite[Theorem 5.1.2]{zimmer}), $\phi_i:=\pi_i\circ\phi$ extends to a
  homomorphism $\Phi_i:SL_n(\RR)\rightarrow H_i$ for each $i$.  Since
  $SL_n(\RR)$ is simple, there is exactly one $\Phi_i$ with infinite
  image, and it follows that the corresponding factor $H_i$ is
  actually $SL_n(\RR)$ itself.  Hence we have that $\phi$ lifts to an
  automorphism $\Phi$ of $SL_n(\RR)$.

  Now, it is well known that the outer automorphism group of
  $SL_n(\RR)$ is comprised just of the identity and the involution
  $-T$.  In other words, we may decompose $\Phi$ as
  $\chi_h\circ(-T)^i$ where $\chi_h$ is conjugation by an element $h$
  of $SL_n(\RR)$ and $i=0$ or $1$.  (When $n$ is even, it may be
  necessary to also include conjugation by a permutation matrix $r$ of
  determinant $-1$.)  Since $\Phi(\Gamma_0)$ is again a lattice of
  $SL_n(\RR)$, we clearly have that $h$ commensurates $SL_n(\ZZ)$.  By
  the proof of \cite[Proposition 6.2.2]{zimmer} (the statement itself
  should be slightly modified to accommodate this case), there exists
  $a\in\RR^*$ such that $ah\in GL_n(\QQ)$.  Of course,
  $\chi_{ah}=\chi_h$, and so the proof is complete in this case.

  In the case that $n$ is even, $\phi$ determines an embedding
  $\bar\phi:\Gamma_0/\mathcal Z(\Gamma_0)\rightarrow PSL_n(\ZZ)$.  One
  may then carry out the above argument inside $PSL_n(\RR)$ to obtain
  that $\bar\phi=\chi_h\circ(-T)^i$.  This map lifts to an
  automorphism $\phi'$ of $SL_n(\ZZ)$, and it is immediate that
  $\phi=\epsilon\circ\phi'$, where $\epsilon$ is as required.
\end{proof}

In the next proposition we shall use the following notation.  For
$V\in Gr_k\QQ_p^n$, let $\stab V$ denote the stabilizer in
$GL_n(\QQ_p)$ of $V$, and if $H\leq GL_n(\QQ_p)$ then let $\stab_H V$
denote the stabilizer in $H$ of $V$.

\begin{prop}
  \label{prop_stab}
  Let $n\geq3$, and let $V\in Gr_k\QQ_p^n$ and $W\in Gr_l\QQ_p^n$.
  Suppose that $K\leq SL_n(\ZZ_p)$ is a subgroup of finite index.  If
  $\stab_KV\subset\stab_KW$ then $l=k$ and $W=V$.
\end{prop}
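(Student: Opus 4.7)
The plan is to exploit a large family of unipotent matrices lying in $\stab_K V$, each of which must also preserve $W$ by hypothesis, and to conclude from this that $W$ has very few possibilities. First, since $SL_n(\ZZ_p)$ acts transitively on $Gr_k\QQ_p^n$, I would conjugate by a suitable element of $SL_n(\ZZ_p)$ to reduce to the case $V = V_0 := \QQ_p e_1 \oplus \cdots \oplus \QQ_p e_k$; this replaces $K$ by another finite-index subgroup of $SL_n(\ZZ_p)$ and $W$ by another $l$-dimensional subspace. Since the new $K$ still has finite index, I can pick $t \geq 1$ with $K_{p^t} \subseteq K$, so that $K_{p^t} \cap \stab V_0 \subseteq \stab_K V_0$.

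The heart of the proof is then to write down explicit elements of $K_{p^t} \cap \stab V_0$ and analyze what they force on $W$. For each $i \neq j$, the elementary matrix $\gamma_{ij} := I + p^t E_{ij}$ has determinant $1$, has entries in $\ZZ_p$, and reduces to $I$ modulo $p^t$, hence lies in $K_{p^t}$; a direct check shows that it preserves $V_0$ precisely when $(i,j)$ is \emph{not} in the ``lower-left block'' (i.e., not in the range $i > k$, $j \leq k$). The key linear-algebraic claim is that any $\QQ_p$-subspace $W \subseteq \QQ_p^n$ preserved setwise by all such $\gamma_{ij}$ must lie in $\{0, V_0, \QQ_p^n\}$. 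If $W \not\subseteq V_0$, pick $w \in W$ with some coordinate $w_j \neq 0$ for some $j > k$; applying $\gamma_{ij}$ with $i \leq k$ and subtracting $w$ yields $p^t w_j e_i \in W$ and hence $e_i \in W$ for every $i \leq k$, so $V_0 \subseteq W$. An analogous argument on the quotient $\QQ_p^n/V_0$, using $\gamma_{ij}$ with $i, j > k$, then shows $W/V_0$ is either $0$ or all of $\QQ_p^n/V_0$, giving $W = V_0$ or $W = \QQ_p^n$. If instead $W \subseteq V_0$, the same reasoning with $\gamma_{ij}$ for $i, j \leq k$ gives $W = 0$ or $W = V_0$.

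Thus $W \in \{0, V_0, \QQ_p^n\}$. In the nontrivial range $1 \leq k, l \leq n-1$ in which the proposition will be applied, the options $W = 0$ and $W = \QQ_p^n$ are incompatible with $\dim W = l$, leaving $W = V_0 = V$ and hence $l = k$ as required. The main content is the case analysis on $\gamma_{ij}$-invariant subspaces; the reduction to $V = V_0$ and the verification that the $\gamma_{ij}$ lie in $\stab_{K_{p^t}} V_0$ are both routine, so no step appears to present a real obstacle beyond careful bookkeeping in the case split.
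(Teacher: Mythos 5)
Your proof is correct, and it takes a genuinely different route from the paper's. The paper argues top--down: since $K$ has finite index (hence is open) in $SL_n(\ZZ_p)$, it is Zariski dense in $H=SL_n(\QQ_p)$, so the hypothesis upgrades to $\stab_HV\subset\stab_HW$; primitivity of the $H$-action on Grassmann spaces (point stabilizers are maximal subgroups) then forces $\stab_HV=\stab_HW$, and since $V$ is determined by its stabilizer, $V=W$ and $l=k$. You argue bottom--up: after conjugating $V$ to $V_0$ by transitivity of $SL_n(\ZZ_p)$, you exhibit the explicit unipotents $I+p^tE_{ij}\in K_{p^t}\cap\stab V_0$ (for $(i,j)$ outside the lower-left block) and show by elementary linear algebra that the only subspaces they jointly preserve are $0$, $V_0$ and $\QQ_p^n$; your difference trick $\gamma_{ij}w-w=p^tw_je_i$ and the quotient argument both check out. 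What your approach buys is concreteness: no Zariski closures and no appeal to maximality of parabolic stabilizers, at the cost of a short case split and of needing $K\supseteq K_{p^t}$ for some $t$, i.e.\ that a finite-index subgroup of $SL_n(\ZZ_p)$ is open. The paper makes the same tacit assumption (its parenthetical ``it is an open subgroup''), and it is harmless here because in the application $K$ arises as the closure of a finite-index subgroup of $SL_n(\ZZ)$, hence is closed of finite index and therefore open, so it contains a principal congruence subgroup. Likewise, your explicit exclusion of the degenerate possibilities $W\in\{0,\QQ_p^n\}$ via the restriction $1\leq k,l\leq n-1$ mirrors an implicit restriction in the paper (for $k\in\{0,n\}$ the stabilizer is all of $H$ and the primitivity argument does not literally apply, and indeed the statement fails at those extremes), and is exactly the range in which the proposition is used.
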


\begin{proof}
  Since $K$ is Zariski dense in $H=SL_n(\QQ_p)$ (it is an open
  subgroup), we have that $\stab_HV\subset\stab_HW$.  It is well-known
  that $H$ acts \emph{primitively} on each $k$-Grassmann space,
  \emph{i.e.}, $H$ acts transitively on $Gr_k\QQ_p^n$ and the
  stabilizer in $H$ of each point of $Gr_k\QQ_p^n$ is a maximal
  subgroup of $H$.  It follows immediately that we have
  $\stab_HV=\stab_HW$.  Now, it is not hard to see that $V$ is
  uniquely determined by $\stab_HV$ and so $V=W$.
\end{proof}

\begin{proof}[Proof of Theorem \ref{thm_affine}]
  In the notation of Lemma \ref{lem_margulis}, we have that
  $\phi=\epsilon\circ\chi_h\circ(-T)^i$.  Since the center of
  $SL_n(\ZZ)$ acts trivially on $Gr_l\QQ_p^n$, we may suppose that
  $\epsilon$ is the identity map and that $\phi=\chi_h\circ(-T)^i$.
  Having done so, $\phi$ clearly lifts to an automorphism $\Phi$ of
  $GL_n(\QQ_p)$, again defined by the formula $\chi_h\circ(-T)^i$.
  For $i=0,1$, let $K_i$ denote the closure in $SL_n(\ZZ_p)$ of
  $\Gamma_i$, so that $X_i$ is a homogeneous $K_i$-space.  Since
  $\Phi(K_0)$ is a compact group containing $\Gamma_1$, we have that
  $\Phi(K_0)\supset K_1$.  By the same reasoning, we have
  $\Phi^{-1}(K_1)\supset K_0$, and so $\Phi(K_0)=K_1$.  Hence, we may
  define a function $\beta:K_0\rightarrow X_1$ by
  \[\beta(k)=\Phi(k)^{-1}f(kL_0).
  \]
  Then $\beta$ is $\Gamma_0$-invariant, since for $\gamma\in\Gamma_0$
  we have
  \[\beta(\gamma k)=\Phi(\gamma k)^{-1}f(\gamma kL_0)
  =\Phi(k)^{-1}\Phi(\gamma)^{-1}\phi(\gamma)f(kL_0)
  =\Phi(k)^{-1}f(kL_0)
  =\beta(k).
  \]
  Hence, by ergodicity of $\Gamma_0\actson K_0$, there exists $t\in
  K_1$ such that $\beta(k)=tL_1$ for almost all $k\in K_0$.  It
  follows that there exists a conull subset $K_0^*\subset K_0$ such
  that for all $k_0\in K_0^*$
  \[f(k_0L_0)=\Phi(k_0)tL_1
  \]
  We next argue that this implies that if $k_0\in K_0^*$, if
  $x=k_0L_0$ then $\Phi(\stab_{K_0}x)\subset\stab_{K_1}f(x)$.  Indeed,
  suppose that $s\in K_0$ satisfies $sx=x$.  Then choose an element
  $k\in K_0$ such that $kk_0,ksk_0\in K_0^*$ (this is possibile since
  $K_0$ is compact and hence unimodular).  It follows that
  \[f(kx)=\phi(kk_0L_0)=\phi(kk_0)tL_1=\phi(k)\phi(k_0)tL_1=\phi(k)f(x)
  \]
  and
  \[f(kx)=f(ksx)=\phi(ksk_0L_0)=\phi(ksk_0)tL_1=\phi(k)\phi(s)\phi(k_0)tL_1=\phi(k)\phi(s)f(x)
  \]
  and hence $\phi(s)\in\stab_{K_1}f(x)$.

  Finally, since $\Phi$ is either $\chi_h$ or $\chi_h\circ(-T)$, we
  have that either $\stab_{K_1}(h^{-1}x)\subset\stab_{K_1}f(x)$ or
  $\stab_{K_1}(h^{-1}x^\bot)\subset\stab_{K_1}f(x)$.  In the first
  case, we can apply Proposition \ref{prop_stab} to conclude that
  $l=k$ and $f(x)=h^{-1}x$.  In the second case we conclude that
  $l=n-k$ and $f(x)=h^{-1}x^\bot$.
\end{proof}

\section{A superrigidity theorem}

In this section, we describe a recent cocycle superrigidity theorem of
Adrian Ioana (see Chapter 3 of \cite{ioana}).  We then derive a
corollary which will be used in the proofs of our main theorems.
Familiarity with Borel cocycles is needed only to understand the
statement of Ioana's theorem and the proof of the corollary.  While it
is probably also possible to use Zimmer's cocycle superrigidity
theorem in our arguments, doing so would require a more technical
approach.

For $i\in\NN$ let $\Gamma\actson(X_i,\mu_i)$ and
$\rho_i:X_{i+1}\rightarrow X_i$ be a factor map (\emph{i.e.}, a
$\Gamma$-invariant measure-preserving map).  Then the corresponding
\emph{inverse limit} is a $\Gamma$-space $(X,\mu)$ together with
factor maps $\pi_i:X\rightarrow X_i$ satisfying
$\pi_i=\rho_i\circ\pi_{i+1}$ and the usual universal property
associated with inverse limits.

\begin{defn}
  \label{defn_profinite}
  If $(X_i,\mu_i)$ are finite $\Gamma$-spaces (with factor maps as
  above), then the inverse limit $(X,\mu)$ is called a
  \emph{profinite} $\Gamma$-space.
\end{defn}

\begin{thm}[Theorem 3.3.2 of \cite{ioana}]
  \label{thm_ioana}
  Suppose that $\Gamma$ is a countable Kazhdan group, and let
  $(X,\mu)$ be a profinite $\Gamma$-space with corresponding factor
  maps $\pi_i:X\rightarrow X_i$.  Suppose additionally that the action
  $\Gamma\actson(X,\mu)$ is ergodic and free.  If $\alpha:\Gamma\times
  X\rightarrow\Lambda$ is a cocycle into an arbitrary countable group
  $\Lambda$, then there exists $i\in\NN$ such that $\alpha$ is
  cohomologous to a cocycle $\Gamma\times X_i\rightarrow\Lambda$.
  More precisely, there exists $i\in\NN$ and a cocycle
  $\alpha_i:\Gamma\times X_i\rightarrow\Lambda$ such that $\alpha$ is
  cohomologous to the cocycle $\alpha'$ defined by
  $\alpha'(g,x)=\alpha_i(g,\pi_i(x))$.
\end{thm}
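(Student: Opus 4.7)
The plan is to combine property (T) of $\Gamma$ with the filtration coming from the profinite presentation $X=\varprojlim X_i$, following a Popa-style deformation/rigidity template adapted to countable-valued cocycles. First, I would restate the conclusion in Hilbert-space terms: form the skew product action $g(x,\lambda)=(gx,\alpha(g,x)\lambda)$ of $\Gamma$ on $X\times\Lambda$ (preserving $\mu$ times counting measure on $\Lambda$), with associated Koopman-type representation $\pi_\alpha$ of $\Gamma$ on $\mathcal{H}:=L^2(X)\otimes\ell^2(\Lambda)$ given by $(\pi_\alpha(g)\xi)(x)=L_{\alpha(g,g^{-1}x)}\,\xi(g^{-1}x)$, where $L$ denotes the left regular representation of $\Lambda$.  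The profinite structure supplies a dense increasing filtration by finite-dimensional subspaces $\mathcal{H}_i:=L^2(X_i)\otimes\ell^2(\Lambda)$.  The desired conclusion translates into producing a $\pi_\alpha$-invariant vector in some $\mathcal{H}_i$ of the special form $x\mapsto\delta_{b(x)}$ for a measurable $b:X_i\to\Lambda$.

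The heart of the argument is then to feed this set-up into Ioana's rigidity machinery.  The tower $\{L^\infty(X_i)\}$ induces a malleable Popa-style deformation of the twisted crossed product $L^\infty(X)\rtimes_\alpha\Gamma$: the subalgebras $L^\infty(X_i)\rtimes\Gamma$ exhaust the ambient algebra, while their ``orthogonal'' directions record the failure of $\alpha$ to factor through $\pi_i$.  Starting from the unit vector $\xi_0=1\otimes\delta_e$ and its conditional projections onto the $\mathcal{H}_i$, one builds a sequence of $\Gamma$-almost-invariant unit vectors in the representation transverse to the genuinely $\pi_\alpha$-invariant subspace.  Property (T) then forces this sequence to converge (after a small correction) to a genuine $\pi_\alpha$-invariant vector, and the profinite filtration forces that invariant vector to already lie in some $\mathcal{H}_i$.

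Finally I would extract the coboundary.  An invariant unit vector $\eta\in\mathcal{H}_i$, viewed as a measurable map $X_i\to\ell^2(\Lambda)$, must by the freeness of $\Gamma\actson X$ and the faithfulness of the regular representation $L$ have the form $\eta(x)=\delta_{b(x)}$ for some measurable $b:X_i\to\Lambda$.  The function $b\circ\pi_i$ then implements the required cohomology, and invariance of $\eta$ translates into $\alpha'(g,x):=b(\pi_i(gx))^{-1}\alpha(g,x)b(\pi_i(x))$ depending on $x$ only through $\pi_i(x)$, which gives the cocycle $\alpha_i$ from the statement.

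The main obstacle is the construction of the almost-invariant vectors via the malleable deformation: the naive projections $\xi_i$ of a fixed vector onto $\mathcal{H}_i$ are not automatically almost-invariant, and Ioana's argument requires a careful analysis in the language of Hilbert bimodules (equivalently, representations of the orbit equivalence relation twisted by $\alpha$) to apply property (T) in the right place.  The extraction step also depends essentially on freeness of $\Gamma\actson X$, which is needed to rule out invariant vectors whose mass is spread across more than one element of $\Lambda$ in a generic fiber.
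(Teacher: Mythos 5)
You should note at the outset that the paper does not prove this statement at all: it is quoted verbatim as Theorem 3.3.2 of Ioana's thesis and used as a black box, so the relevant comparison is with Ioana's own argument. Measured against that, your proposal has a genuine gap, and in fact the Hilbert-space reformulation of the target is wrong. In the skew-product Koopman representation on $L^2(X)\otimes\ell^2(\Lambda)$, a $\pi_\alpha$-invariant vector of the form $x\mapsto\delta_{b(x)}$ satisfies $\delta_{b(gx)}=\delta_{\alpha(g,x)b(x)}$, i.e.\ $\alpha(g,x)=b(gx)b(x)^{-1}$; asking that $b$ factor through $X_i$ only strengthens this. So the object you propose to produce exists precisely when $\alpha$ is a coboundary, which is far stronger than the theorem and false in general: for $\alpha(g,x)=\rho(g)$ with $\rho:\Gamma\rightarrow\Lambda$ a homomorphism with infinite image, $\alpha$ already factors through the one-point factor, yet the skew product $\Gamma$-action on $X\times\Lambda$ admits no nonzero invariant $L^2$-vector. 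For the same reason your closing claim that ``invariance of $\eta$ translates into $\alpha'(g,x)=b(\pi_i(gx))^{-1}\alpha(g,x)b(\pi_i(x))$ depending only on $\pi_i(x)$'' is inconsistent with your own setup: genuine invariance of a delta-valued vector forces $\alpha'$ to be trivial, not merely to factor. Moreover the step you yourself flag as the obstacle --- producing almost-invariant vectors --- is not a detail but the entire engine, and the label ``malleable Popa-style deformation'' is a mischaracterization: profinite actions are compact, not malleable, and Ioana's proof is not a deformation argument of Popa--Bernoulli type.

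What Ioana actually does is intrinsically a two-variable argument. For each $i$ he forms the relatively independent self-joining $X\times_{X_i}X$ over the finite factor $X_i$ and considers the unitary representation of $\Gamma$ on $L^2(X\times_{X_i}X)\otimes\ell^2(\Lambda)$ in which the $\ell^2(\Lambda)$-leg is twisted by $\lambda\mapsto\alpha(g,x)\,\lambda\,\alpha(g,y)^{-1}$, i.e.\ by the \emph{difference} of the two cocycle legs. Since the joinings concentrate on the diagonal as $i\rightarrow\infty$ and the difference cocycle is trivial on the diagonal, the vectors $1\otimes\delta_e$ become almost invariant; property (T) then yields a genuinely invariant vector for some finite $i$, and an untwisting/measurable-selection argument (using ergodicity on the joining and the freeness hypothesis) produces $b:X\rightarrow\Lambda$ with $b(gx)\alpha(g,x)b(x)^{-1}$ depending only on $(g,\pi_i(x))$. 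It is exactly this difference-of-cocycles structure on the joining that makes the conclusion ``cohomologous to a cocycle factoring through $X_i$'' rather than ``coboundary''; that structure is absent from your single-copy skew product, so the proposal as written cannot be repaired merely by supplying the missing almost-invariance step --- the underlying representation has to be changed.
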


This is most useful in the case when $\alpha$ is a cocycle
corresponding to a Borel homomorphism $f$ from $E_\Gamma$ to
$E_\Lambda$, where $E_\Lambda$ is some orbit equivalence relation.  In
this case, the theorem gives hypotheses under which $f$ can be
replaced with an action-preserving map.

\begin{cor}
  \label{cor_rigidity}
  Suppose that $\Gamma$ is a countable Kazhdan group, and let
  $(X,\mu)$ be a free, ergodic profinite $\Gamma$-space.  Let
  $\Lambda$ be a countable group and let $\Lambda\actson Y$ be a free
  action.  Suppose that $f$ is a Borel homomorphism from $E_\Gamma$ to
  $E_\Lambda$.  Then there exists an ergodic component
  $\Gamma_0\actson X_0$ for $\Gamma\actson X$ and a permutation group
  homomorphism $(\phi,f'):\Gamma_0\actson X_0\longrightarrow
  \Lambda\actson Y$ such that for all $x\in X_0$, we have that
  $f'(x)\mathrel{E}_\Lambda f(x)$.
\end{cor}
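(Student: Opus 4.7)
The plan is to extract a Borel cocycle $\alpha$ from the homomorphism $f$, apply Ioana's theorem to trivialize $\alpha$ in cohomology up to a finite quotient, and then read off the desired permutation group homomorphism from the cocycle that survives.

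First, since $\Lambda$ acts freely on $Y$, for each $g\in\Gamma$ and almost every $x\in X$ there is a unique element $\alpha(g,x)\in\Lambda$ with $f(gx)=\alpha(g,x)f(x)$. The usual routine check, together with the Luzin--Novikov uniformization, shows that $\alpha:\Gamma\times X\to\Lambda$ is a Borel cocycle. Applying Theorem~\ref{thm_ioana} yields some $i\in\NN$, a cocycle $\alpha_i:\Gamma\times X_i\to\Lambda$, and a Borel function $b:X\to\Lambda$ such that
\[\alpha_i(g,\pi_i(x))\;=\;b(gx)\,\alpha(g,x)\,b(x)^{-1}\]
holds for all $g\in\Gamma$ and almost every $x\in X$.

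Set $f'(x)=b(x)f(x)$. Then $f'(x)$ is $E_\Lambda$-equivalent to $f(x)$ pointwise, and substituting the cohomology identity into the defining relation of $\alpha$ gives $f'(gx)=\alpha_i(g,\pi_i(x))f'(x)$. Because $\Gamma\actson(X,\mu)$ is ergodic and $X_i$ is a finite $\Gamma$-space, $(\pi_i)_*\mu$ is supported on a single $\Gamma$-orbit, so after discarding a null set we may assume $X_i$ is $\Gamma$-transitive. Fix a base point $x_0\in X_i$, set $\Gamma_0=\{g\in\Gamma:gx_0=x_0\}$, and let $X_0=\pi_i^{-1}(x_0)$. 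By the discussion of ergodic components in Section~2, $\Gamma_0$ has finite index in $\Gamma$ and $X_0$ is an ergodic component for $\Gamma_0\actson X$. Define $\phi:\Gamma_0\to\Lambda$ by $\phi(g)=\alpha_i(g,x_0)$; since $x_0$ is fixed by $\Gamma_0$, the cocycle identity for $\alpha_i$ collapses to $\phi(gh)=\phi(g)\phi(h)$, so $\phi$ is a genuine group homomorphism.

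Restricting $f'$ to $X_0$, the equivariance $f'(gx)=\alpha_i(g,\pi_i(x))f'(x)$ specializes on $X_0$ to $f'(gx)=\phi(g)f'(x)$ for $g\in\Gamma_0$ and $x\in X_0$, so $(\phi,f'|_{X_0})$ is the required permutation group homomorphism and $f'(x)\mathrel{E}_\Lambda f(x)$ holds by construction. The main technical obstacle is to synchronize the various null sets — confirming that the cohomology identity, the definition of $\alpha$, and the function $b$ are simultaneously valid on a $\Gamma_0$-invariant conull subset of $X_0$ — and to reconcile the ergodic-decomposition language of Section~2 with the profinite inverse limit language of Theorem~\ref{thm_ioana}. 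Both tasks are routine once one observes that $X_0$ inherits a natural $\Gamma_0$-invariant probability measure and that $\Gamma_0\actson X_0$ is itself ergodic, so that any further null exceptional set produced along the way can be deleted without harm.
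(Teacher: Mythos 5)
Your proposal is correct and follows essentially the same route as the paper: form the cocycle $\alpha$ from $f$ via freeness of $\Lambda\actson Y$, untwist it by $b$ using Theorem~\ref{thm_ioana}, set $f'=bf$, take $\Gamma_0$ to be the stabilizer of a point $x_0$ in the finite factor and $X_0=\pi_i^{-1}(x_0)$ as the ergodic component, and read off $\phi(\gamma)=\alpha_i(\gamma,x_0)$. The paper's proof is the same argument, differing only in that it cites Ioana's 3.1.2(ii) for $X_0$ being an ergodic component and likewise relegates the null-set bookkeeping to a parenthetical remark.
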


\begin{proof}
  Let $\alpha:\Gamma\actson X\rightarrow\Lambda$ be the cocycle
  corresponding to $f$.  By Theorem \ref{thm_ioana}, there exists a
  finite factor $(X',\mu')$ of $(X,\mu)$ (denote the projection map by
  $\pi$), and a Borel function $b:X\rightarrow\Lambda$ such that the
  adjusted cocycle
  \[\alpha'(g,x):=b(gx)\alpha(g,x)b(x)^{-1}
  \]
  depends only on $g$ and $\pi(x)$.  Choose any $x_0\in X'$, and let
  $\Gamma_0$ be the stabilizer of $x_0$ in $\Gamma$.  Clearly
  $\Gamma_0\leq\Gamma$ is a subgroup of finite index, and by 3.1.2(ii)
  of \cite{ioana}, $X_0:=\pi^{-1}(x_0)$ is an ergodic component for
  the action of $\Gamma_0$.  Since $\pi$ is constant on $X_0$, the
  restriction of $\alpha'$ to $\Gamma_0\times X_0$ is independent of
  $x\in X_0$.  It follows that $\phi(\gamma):=\alpha'(\gamma,\cdot)$
  defines a homomorphism $\Gamma_0\rightarrow\Lambda$, and then
  letting $f'=bf$ it is easily seen that $(\phi,f')$ satisfies our
  requirements.  (Of course, since $\alpha'$ need only satisfy the
  cocycle identity almost everywhere, one may need to delete a null
  set of $X_0$.)
\end{proof}

We first use this corollary to establish the following Borel
incomparability result.

\begin{defn}
  Fix $n\in\NN$ and $p$ prime.  Let $E_{SL_n\ZZ}^k$ and
  $E_{GL_n\QQ}^k$ denote the orbit equivalence relations induced on
  $Gr_k\QQ_p^n$ by $SL_n(\ZZ)$ and $GL_n(\QQ)$, respectively.
\end{defn}

\begin{thm}
  \label{thm_glnq_separate}
  Suppose that $k,l<n$ and $l$ is neither $k$ nor $n-k$.  Then
  $E_{GL_n\QQ}^k$ is Borel incomparable with $E_{GL_n\QQ}^l$.
\end{thm}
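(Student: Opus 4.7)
By symmetry of the hypothesis in $k$ and $l$, it suffices to rule out $E_{GL_n\QQ}^k\leq_B E_{GL_n\QQ}^l$. Suppose for contradiction that $f:Gr_k\QQ_p^n\to Gr_l\QQ_p^n$ is a Borel reduction; in particular, $f$ is a Borel homomorphism from $E^k_{SL_n\ZZ}$ (the restriction of $E_{GL_n\QQ}^k$ to $SL_n(\ZZ)$-orbits) to $E_{GL_n\QQ}^l$. The plan is to upgrade $f$ to a permutation-group homomorphism between Grassmannian actions satisfying the hypotheses of Theorem~\ref{thm_affine}, whose conclusion then contradicts the assumption $l\notin\{k,n-k\}$.

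I would apply Corollary~\ref{cor_rigidity} with $\Gamma=SL_n(\ZZ)$ (Kazhdan for $n\geq3$) acting on $Gr_k\QQ_p^n$; this action is ergodic and profinite via the principal congruence components of Proposition~\ref{prop_congruence}. The required freeness fails because scalars act trivially, but can be arranged by standard modifications such as tensoring with a free Bernoulli shift and extending $f$ via projection. The corollary then produces a finite-index subgroup $\Gamma_0\leq SL_n(\ZZ)$, an ergodic component $X_0\subset Gr_k\QQ_p^n$ for $\Gamma_0$, a homomorphism $\phi:\Gamma_0\to GL_n(\QQ)$, and a Borel map $f':X_0\to Gr_l\QQ_p^n$ with $f'(\gamma x)=\phi(\gamma)f'(x)$ and $f'(x)\mathrel{E_{GL_n\QQ}^l}f(x)$ on $X_0$.

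Next I would show $\phi(\Gamma_0)$ is infinite. Otherwise $\ker\phi$ has finite index in $\Gamma_0$, so by the congruence subgroup property some ergodic component of $\ker\phi\actson X_0$ is a principal congruence component (Proposition~\ref{prop_congruence}) of positive Haar measure; but $f'$ is constant on such a component, forcing $f$ to map a positive-measure set into a single $E_{GL_n\QQ}^l$-class, contradicting the fact that preimages under a Borel reduction of $E_{GL_n\QQ}^l$-classes are single countable $E_{GL_n\QQ}^k$-classes while the measure is nonatomic. With $\phi(\Gamma_0)$ infinite, the Mostow--Margulis argument from the proof of Lemma~\ref{lem_margulis}, applied to the composition of $\phi$ with $GL_n(\QQ)\to PGL_n(\QQ)$ (valid since scalars act trivially on Grassmannians), produces $h\in GL_n(\QQ)$ and $i\in\{0,1\}$ such that, after passing to a finite-index $\Gamma_0'\leq\Gamma_0$, we have $\phi(\gamma)\equiv h\gamma^{(-T)^i}h^{-1}$ modulo scalars for all $\gamma\in\Gamma_0'$.

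Finally, by conjugating $f'$ by $h^{-1}$ and, if $i=1$, post-composing with the orthogonal-complement map $V\mapsto V^\bot:Gr_l\QQ_p^n\to Gr_{n-l}\QQ_p^n$ (which intertwines $\gamma^{-T}$ and $\gamma$), I obtain a $\Gamma_0'$-equivariant Borel map $f'':X_0\to Gr_m\QQ_p^n$, where $m=l$ when $i=0$ and $m=n-l$ when $i=1$. After restricting to a $\Gamma_0'$-ergodic sub-component of $X_0$, ergodicity forces $f''$ to land a.e.\ in a single $\Gamma_0'$-ergodic component $X_1\subset Gr_m\QQ_p^n$, and Theorem~\ref{thm_affine} applied to this permutation-group homomorphism forces $m\in\{k,n-k\}$, i.e., $l\in\{k,n-k\}$ -- the desired contradiction. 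The steps I expect to require the most care are the freeness set-up needed to invoke Corollary~\ref{cor_rigidity} and the Margulis-modulo-scalars reduction of $\phi$ to the rational conjugation (plus inverse-transpose) form required by Theorem~\ref{thm_affine}; everything else is a straightforward assembly of results established earlier in the paper.
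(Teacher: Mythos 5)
Your overall architecture is the same as the paper's: reduce to a homomorphism from $E_{SL_n\ZZ}^k$ to $E_{GL_n\QQ}^l$, apply Ioana's superrigidity (Corollary~\ref{cor_rigidity}) to get a permutation group homomorphism, use the Margulis machinery (Lemmas~\ref{lem_margulis} and \ref{lem_margulis2}) to put $\phi$ in the form $\chi_h\circ(-T)^i$ up to scalars, and then invoke Theorem~\ref{thm_affine} to force $l\in\{k,n-k\}$. (The paper actually proves the stronger statement that $E_{SL_n\ZZ}^k$ is $E_{GL_n\QQ}^l$-ergodic, Theorem~\ref{thm_qiso_ergodic}, from which the present theorem is immediate, but the skeleton is identical.) The Margulis step and the endgame are acceptable as sketches.

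However, there is a genuine gap at exactly the point you flag as delicate: the freeness needed for Corollary~\ref{cor_rigidity}. Your proposed fix --- tensoring with a free Bernoulli shift and extending $f$ via projection --- does not work here. First, Ioana's theorem is specifically about \emph{profinite} actions: the conclusion that the cocycle descends to a finite factor comes from the inverse-limit structure, and the product of $Gr_k\QQ_p^n$ with a Bernoulli shift is no longer profinite (the Bernoulli factor is mixing and has no nontrivial finite factors), so Theorem~\ref{thm_ioana} simply does not apply to the modified action. Second, the more serious freeness problem is on the \emph{target} side, and it is not caused by scalars alone: even $PGL_n(\QQ)$ has many non-scalar elements fixing points of $Gr_l\QQ_p^n$, and without freeness of $\Lambda\actson Y$ on the relevant set the cocycle associated to $f$ (needed to even state Corollary~\ref{cor_rigidity}) is not well defined; ``extending $f$ via projection'' does not produce such a cocycle. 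The paper handles both issues with Lemma~\ref{lem_thomas_free} (Thomas's dichotomy): either $f$ maps a conull set into a single $GL_n(\QQ)$-orbit (which is ruled out, in your setting, by the countable-to-one/nonatomicity argument you give later), or $f$ maps a conull set into $Fr(PGL_n(\QQ)\actson Gr_l\QQ_p^n)$; applying the same lemma with $f=\mathrm{id}$ shows $PSL_n(\ZZ)$ acts almost freely on $Gr_k\QQ_p^n$. One then works with $PSL_n(\ZZ)$ and $PGL_n(\QQ)$ restricted to the free part (discarding a null set), which preserves profiniteness, and afterwards lifts $\bar\phi$ back to $SL_n(\ZZ)\rightarrow GL_n(\QQ)$. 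You need some argument of this kind in place of the Bernoulli modification; without it the appeal to Corollary~\ref{cor_rigidity} is unjustified.
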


In the next section, we shall see that this theorem implies that the
quasi-isomorphism relations on $R(n,p,k),R(n,p,l)$ are Borel
incomparable.  We prove slightly more:

\begin{thm}
  \label{thm_qiso_ergodic}
  Let $n\geq4$ and $k,l<n$, and suppose that $l$ is neither $k$
  nor $n-k$.  Then $E_{SL_n\ZZ}^k$ (together with the Haar measure on
  $Gr_k\QQ_p^n$) is $E_{GL_n\QQ}^l$-ergodic.
\end{thm}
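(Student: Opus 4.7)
The strategy is to apply Corollary \ref{cor_rigidity} to any Borel homomorphism $f$ from $E_{SL_n\ZZ}^k$ to $E_{GL_n\QQ}^l$, and then to combine the Margulis normal subgroup theorem with the argument of Theorem \ref{thm_affine} to force the resulting permutation-group homomorphism into a degenerate form whose image lies in a single $GL_n(\QQ)$-orbit.

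So suppose $f : Gr_k\QQ_p^n \to Gr_l\QQ_p^n$ is a Borel homomorphism from $E_{SL_n\ZZ}^k$ to $E_{GL_n\QQ}^l$; the goal is a conull subset on which $f$ lands in a single $GL_n(\QQ)$-orbit. Since $n \geq 4 \geq 3$, the group $SL_n(\ZZ)$ is Kazhdan, and $Gr_k\QQ_p^n$ is an ergodic, profinite $SL_n(\ZZ)$-space as the inverse limit of its finite principal congruence components. After quotienting by the scalar centers of $SL_n(\ZZ)$ and $GL_n(\QQ)$ and restricting to a conull subset on which the projective actions are free, Corollary \ref{cor_rigidity} yields a finite-index subgroup $\Gamma_0 \leq SL_n(\ZZ)$ (which by Proposition \ref{prop_congruence} we may assume contains a principal congruence subgroup), an ergodic component $X_0$ for $\Gamma_0 \actson Gr_k\QQ_p^n$, and a permutation-group homomorphism $(\phi, f'): \Gamma_0 \actson X_0 \to GL_n(\QQ) \actson Gr_l\QQ_p^n$ satisfying $f'(x) \mathrel{E_{GL_n\QQ}^l} f(x)$ on $X_0$.

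By the Margulis normal subgroup theorem applied to $N := \ker \phi \trianglelefteq \Gamma_0$, either $N$ is finite or $[\Gamma_0 : N] < \infty$. In the first case $\phi$ is virtually injective, and by adapting Lemma \ref{lem_margulis} via Margulis--Mostow superrigidity applied to the $n$-dimensional rational representation of $SL_n$ determined by $\phi$, we may write $\phi = \chi_h \circ (-T)^i$ modulo the center, for some $h \in GL_n(\QQ)$ and $i \in \{0,1\}$. The argument of Theorem \ref{thm_affine} then applies almost verbatim, with the compact group $K_1$ replaced by the (still Zariski-dense) compact $h K_0 h^{-1} \leq GL_n(\QQ_p)$, yielding $f'(x) = hx$ or $f'(x) = h x^\perp$ for almost every $x \in X_0$. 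By Proposition \ref{prop_stab} this forces $l = k$ or $l = n-k$, contradicting our hypothesis. In the second case, $f'$ is $N$-invariant; ergodicity of $N$ on each of its finitely many ergodic components $Y_1, \ldots, Y_M \subset X_0$ forces $f'$ to be essentially constant on each $Y_j$, and since $\Gamma_0/N$ transitively permutes these components while acting on the resulting finite set of values through $\phi(\Gamma_0) \leq GL_n(\QQ)$, all these values lie in a common $GL_n(\QQ)$-orbit. Propagating across coset representatives of $\Gamma_0$ in $SL_n(\ZZ)$, using that $f$ is itself a homomorphism of orbit equivalence relations, then yields the conclusion on a conull subset of $Gr_k\QQ_p^n$.

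The primary obstacle is securing the freeness hypothesis of Corollary \ref{cor_rigidity} in the presence of the scalar centers of $SL_n(\ZZ)$ and $GL_n(\QQ)$; this demands a careful passage to projective quotients and a verification that the relevant orbit equivalence relations are preserved and that the set of free-orbit points is conull. A secondary issue is the extension of Lemma \ref{lem_margulis} to a target in $GL_n(\QQ)$ rather than $SL_n(\ZZ)$, but this is a routine consequence of Margulis--Mostow superrigidity together with the classification of $n$-dimensional rational representations of $SL_n$.
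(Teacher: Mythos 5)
Your overall skeleton---Ioana's Corollary \ref{cor_rigidity}, then the Margulis normal subgroup dichotomy for $\phi$, then superrigidity to put $\phi$ in the form $\chi_h\circ(-T)^i$ and force $l=k$ or $l=n-k$ via a Theorem \ref{thm_affine}-type argument---is the same as the paper's, and your direct handling of the finite-image case (values permuted through $\phi(\Gamma_0)$, then propagated over $SL_n(\ZZ)$-translates) is fine. The genuine gap is at exactly the point you label the ``primary obstacle'' and then defer: the freeness hypothesis of Corollary \ref{cor_rigidity} on the \emph{target} side cannot be secured by ``a verification that the set of free-orbit points is conull.'' Even granting that $Fr(PGL_n(\QQ)\actson Gr_l\QQ_p^n)$ is Haar-conull, the relevant measure is the pushforward of $\mu$ under $f$, and $f$ is an arbitrary Borel homomorphism: it could send a positive-measure set to points stabilized by nontrivial elements of $PGL_n(\QQ)$ (a constant map is the extreme case), in which case no conull restriction of the domain lands in the free part and the cocycle $\alpha(\gamma,x)=$ the unique $\lambda$ with $f(\gamma x)=\lambda f(x)$ is not even defined. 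The paper resolves this with Lemma \ref{lem_thomas_free} (Thomas's Lemma 5.1 of \cite{plocal}), a dichotomy intertwined with the conclusion: either $f$ already maps a conull set into a single $GL_n(\QQ)$-orbit (which is what the theorem asserts, and contradicts the paper's contradiction hypothesis), or $f$ maps a conull set into the free part of the projective action. Without this dichotomy your application of Corollary \ref{cor_rigidity} does not get off the ground; this is a missing idea, not a routine check. (Almost-freeness of the source action is the harmless half and follows from the same lemma applied to the identity map.)

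A smaller remark: in the injective case you propose to rerun the proof of Theorem \ref{thm_affine} with $K_1$ replaced by $hK_0h^{-1}\leq GL_n(\QQ_p)$. The paper instead invokes Lemma \ref{lem_margulis2} to pass to a further ergodic subcomponent on which $\phi(\Gamma_0)$ is a finite-index subgroup of $SL_n(\ZZ)$, and then needs one more ergodicity argument to ensure $f(X_0)$ lies in a single ergodic component of $\phi(\Gamma_0)\actson Gr_l\QQ_p^n$ before Theorem \ref{thm_affine} applies; your variant would need the analogous step (that $f'(X_0)$ is essentially contained in a single orbit of the conjugated compact group), which you do not supply. That part is repairable; the freeness dichotomy above is the substantive omission.
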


Theorem \ref{thm_glnq_separate} follows immediately; any Borel
reduction from $E_{GL_n\QQ}^k$ to $E_{GL_n\QQ}^l$ is clearly a weak
Borel reduction from $E_{SL_n\ZZ}^k$ to $E_{GL_n\QQ}^l$, and Theorem
\ref{thm_qiso_ergodic} implies that
$E_{SL_n\ZZ}^k\not\leq_B^wE_{GL_n\QQ}^l$.

\begin{proof}[Proof of Theorem~\ref{thm_qiso_ergodic}]
  Let $f:Gr_k\QQ_p^n\rightarrow Gr_l\QQ_p^n$ be a Borel homomorphism
  from $E_{SL_n\ZZ}^k$ to $E_{GL_n\QQ}^l$, and suppose towards a
  contradiction that $f$ does not map a conull set into a single
  $GL_n(\QQ)$-orbit.  We first reduce the analysis to a situation
  where the hypotheses of Corollary \ref{cor_rigidity} hold.  Since
  $SL_n(\ZZ)$ acts ergodically on $Gr_k\QQ_p^n$, we need only argue
  that both $SL_n(\ZZ)\actson Gr_k\QQ_p^n$ and $GL_n(\QQ)\actson
  Gr_l\QQ_p^n$ are free actions.  While neither action is literally
  free, it will suffice to establish the following statements:

  \begin{claim}[a]
    The action of $PSL_n(\ZZ)$ on $Gr_k\QQ_p^n$ is almost free.
  \end{claim}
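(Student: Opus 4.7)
The plan is to show that for each $g \in SL_n(\ZZ) \setminus \{\pm I\}$, the fixed-point set $F_g := \{V \in Gr_k\QQ_p^n : gV = V\}$ has Haar measure zero. Because the center $\{\pm I\}$ acts trivially on $Gr_k\QQ_p^n$, this will reduce the claim to showing that the complement $Gr_k\QQ_p^n \setminus \bigcup_{g \neq \pm I} F_g$, which consists exactly of the points with trivial $PSL_n(\ZZ)$-stabilizer, is conull. Since $SL_n(\ZZ)$ is countable, it suffices to handle one $g$ at a time.

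First I would observe that the condition $gV = V$ is polynomial in the Pl\"ucker coordinates of $V$, so $F_g$ is a Zariski-closed subvariety of the irreducible $p$-adic analytic manifold $Gr_k\QQ_p^n$. It is a standard fact that any proper Zariski-closed subvariety of such a manifold has Haar measure zero (locally, in an affine chart it is cut out by a nonzero polynomial, which has Lebesgue measure zero). Thus it suffices to show $F_g \neq Gr_k\QQ_p^n$ whenever $g \neq \pm I$.

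The key step is therefore to verify that no element of $SL_n(\ZZ) \setminus \{\pm I\}$ stabilizes every $k$-dimensional subspace of $\QQ_p^n$. Assuming $1 \leq k \leq n-1$, for any nonzero $v \in \QQ_p^n$ the intersection of all $k$-dimensional subspaces containing $v$ equals the line $\QQ_p v$, since given any $w \notin \QQ_p v$ one may extend $v$ to a basis of $\QQ_p^n$ avoiding $w$ and take the span of the first $k$ basis vectors. Hence if $g$ fixes every such $V$ setwise, then $gv \in \QQ_p v$ for every nonzero $v$, which forces $g = \lambda I$ for some $\lambda \in \QQ_p$. Since $g \in SL_n(\ZZ)$, we must have $\lambda \in \ZZ$ with $\lambda^n = 1$, so $\lambda = \pm 1$, contradicting $g \notin \{\pm I\}$.

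The main obstacle is really just the algebraic-geometric input that proper Zariski-closed subvarieties of the $p$-adic Grassmannian are Haar-null; everything else is elementary linear algebra together with countable additivity of the Haar measure. I expect the write-up to be quite short once this standard measure-zero fact is invoked.
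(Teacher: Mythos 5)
Your argument is correct, but it is not the route the paper takes. The paper obtains Claim (a) as an immediate by-product of Lemma \ref{lem_thomas_free} (essentially Thomas's Lemma 5.1 of \cite{plocal}): apply that lemma with $l=k$ and $f$ the identity map on $Gr_k\QQ_p^n$; the first alternative (a conull set mapped into a single $GL_n(\QQ)$-orbit) is impossible since orbits are countable and the Haar measure is nonatomic, so a conull set lies in $Fr(PGL_n(\QQ)\actson Gr_k\QQ_p^n)\subset Fr(PSL_n(\ZZ)\actson Gr_k\QQ_p^n)$. You instead prove the statement directly: for each nonscalar $g\in SL_n(\ZZ)$ the fixed-point set $F_g$ is a proper Zariski-closed subset of the (irreducible) Grassmannian --- properness because an element stabilizing every $k$-dimensional subspace with $1\leq k\leq n-1$ fixes every line and hence is scalar, and $\pm I$ are the only scalars in $SL_n(\ZZ)$ --- so $F_g$ is Haar-null, and the conclusion follows by countability of $SL_n(\ZZ)$. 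This is a sound and essentially self-contained argument (your parenthetical construction of a $k$-plane containing $v$ but not $w$ is slightly sketchy --- better to take a hyperplane through $v$ missing $w$ and a $k$-plane inside it --- but that is cosmetic), and it proves somewhat more than needed: the same reasoning gives almost-freeness for any countable group of nonscalar matrices, e.g.\ $PGL_n(\QQ)$, which is exactly the content behind Thomas's lemma. What the paper's approach buys is economy --- Lemma \ref{lem_thomas_free} is already needed for Claim (b), so Claim (a) comes for free; what yours buys is independence from that citation, at the cost of invoking (or proving) the standard fact that a proper subvariety of the $p$-adic Grassmannian is null for the invariant measure.
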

  
  \begin{claim}[b]
    The function $f$ maps a (Haar) conull set into the free part of
    the action of $PGL_n(\QQ)$ on $Gr_l\QQ_p^n$.
  \end{claim}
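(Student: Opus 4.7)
The proofs of (a) and (b) have rather different flavors: (a) is a direct algebraic-geometric measure computation, while (b) requires a genuine rigidity argument and is where the real work lies.

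For (a), the plan is to enumerate the countable group $SL_n(\ZZ)$ and, for each $\gamma$ outside the finite center $Z$, to observe that the fixed set $\{V \in Gr_k\QQ_p^n : \gamma V = V\}$ is a proper Zariski closed subvariety of $Gr_k\QQ_p^n$. Properness holds because, for $1\leq k\leq n-1$, a matrix fixing every $k$-dimensional subspace of $\QQ_p^n$ would fix every line (obtained by intersecting $k$-planes through any two given lines) and hence be scalar, so central. A proper Zariski closed subset of the $p$-adic Grassmannian carries zero Haar measure, so the union of these fixed sets over non-central $\gamma$ is a countable union of $\mu$-null sets; its complement is the free part for the $PSL_n(\ZZ)$-action.

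For (b), let $N\subseteq Gr_l\QQ_p^n$ be the set of $W$ with nontrivial $PGL_n(\QQ)$-stabilizer. Because $N$ is $GL_n(\QQ)$-invariant and $f$ is a homomorphism of equivalence relations, the preimage $f^{-1}(N)$ is $SL_n(\ZZ)$-invariant; ergodicity of $SL_n(\ZZ)\actson Gr_k\QQ_p^n$ therefore forces $f^{-1}(N)$ to be $\mu$-null (giving the claim) or $\mu$-conull, and the plan is to rule out the conull case. Assuming $f^{-1}(N)$ is conull, the assignment $V\mapsto \stab_{PGL_n(\QQ)} f(V)$ is $SL_n(\ZZ)$-equivariant up to conjugation, so ergodicity pins its $PGL_n(\QQ)$-conjugacy class down to some fixed nontrivial class $[H]$ for $\mu$-almost every $V$. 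A Borel selection of conjugators then adjusts $f$ within $GL_n(\QQ)$-orbits (leaving the homomorphism property intact) so that $\stab_{PGL_n(\QQ)} f(V) = H$ a.e., i.e.\ $f$ lands in the fixed locus of $H$, and the effective target action factors through the countable quotient $N_{PGL_n(\QQ)}(H)/H$.

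The main obstacle is converting this rigid structure into a contradiction. The intended route is to restrict to the free part of the residual $N(H)/H$-action on the fixed locus of $H$, apply Corollary \ref{cor_rigidity} on some ergodic component to obtain a permutation group homomorphism $(\phi,f'):\Gamma_0\actson X_0 \to (N(H)/H)\actson(\text{fixed locus of }H)$, and then argue that since $H\neq\{e\}$, Lemma \ref{lem_margulis} together with structural facts about proper algebraic quotients forces $\phi$ to have finite image. Once $\phi$ is finite-image, $f'$---and hence the original $f$---takes only countably many values modulo $GL_n(\QQ)$-orbits, and a final appeal to ergodicity of the $SL_n(\ZZ)$-action collapses these values into a single $GL_n(\QQ)$-orbit, contradicting our standing hypothesis that $f$ does not map a conull set into one $GL_n(\QQ)$-orbit. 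Carrying out this last step, especially verifying that the Margulis-type rigidity really does cut $\phi$ down to finite image for every nontrivial $H$, is the delicate part I expect to have to do most carefully.
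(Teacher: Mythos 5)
Your route is genuinely different from the paper's: the paper gets Claim (b) for free, since it is proved under the standing assumption (towards a contradiction) that $f$ does not map a conull set into a single $GL_n(\QQ)$-orbit, and Lemma \ref{lem_thomas_free} (Thomas's Lemma 5.1 of \cite{plocal}, observed to work for arbitrary $k,l$) says that the only other alternative is exactly that $f$ maps a conull set into $Fr(PGL_n(\QQ)\actson Gr_l\QQ_p^n)$. However, your self-contained argument has genuine gaps. The central one is the step ``ergodicity pins the $PGL_n(\QQ)$-conjugacy class of $\stab f(V)$ down to a fixed class $[H]$'': the map $V\mapsto[\stab_{PGL_n(\QQ)}f(V)]$ is invariant only as a map into the quotient of the space of subgroups of $PGL_n(\QQ)$ by the conjugacy relation, which is a countable, non-smooth Borel equivalence relation, so the quotient is not a standard Borel space and plain ergodicity does not apply. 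What you would need is $F$-ergodicity of $E_{SL_n\ZZ}^k$ with respect to that conjugacy relation, which is not available at this point and is of essentially the same strength as the rigidity statements the whole section is trying to establish.

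Two further problems compound this. First, your plan to apply Corollary \ref{cor_rigidity} to the residual action $N(H)/H\actson\mathrm{Fix}(H)$ requires that target action to be free; restricting to its free part only helps if you already know the adjusted $f$ lands there almost everywhere, which is precisely another instance of the freeness claim you are trying to prove, so the argument is circular (or at least needs a recursion on the non-free locus that you have not set up). Second, the key assertion that Margulis-type rigidity forces $\phi:\Gamma_0\rightarrow N(H)/H$ to have finite image for \emph{every} nontrivial $H$ is left unproved, and it does not follow from Lemma \ref{lem_margulis}, which concerns injective homomorphisms into $SL_n(\ZZ)$; for instance, when $H$ is finite, $N(H)/H$ can contain large linear groups over $\QQ$, and ruling out an infinite (in particular, infinite bounded) image requires a separate superrigidity/arithmeticity argument at all places. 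Your argument for part (a) (fixed sets of non-central elements are proper subvarieties, hence Haar null) is fine and is essentially the standard one, but for part (b) the proposal as it stands does not close; the economical fix is to do what the paper does and quote Lemma \ref{lem_thomas_free}, using the standing hypothesis to exclude its first alternative.
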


  Here, if the countable group $\Gamma$ acts on a standard Borel space
  $X$, then we let
  \[Fr(\Gamma\actson X):=\{x\in X:1\neq g\in\Gamma\implies gx\neq x\}
  \]
  denote the \emph{free part} of the action of $\Gamma$ on $X$.  If
  $X$ carries a (not necessarily $\Gamma$-invariant) probability
  measure, then we say that $\Gamma\actson X$ is \emph{almost free}
  iff $Fr(\Gamma\actson X)$ is conull.

  \begin{lem}[essentially Lemma 5.1 of \cite{plocal}]
    \label{lem_thomas_free}
    Suppose that $f:Gr_k\QQ_p^n\rightarrow Gr_l\QQ_p^n$ is a Borel
    homomorphism from $E_{SL_n\ZZ}^k$ to $E_{GL_n\QQ}^l$.  Then either
    $f$ maps a conull set into a single $GL_n(\QQ)$-orbit, or there
    exists a conull $X\subset Gr_k\QQ_p^n$ such that
    \[f(X)\subset Fr(PGL_n(\QQ)\actson Gr_l\QQ_p^n).
    \]
  \end{lem}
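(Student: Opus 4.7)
The plan is to split the analysis via ergodicity into a dichotomy, and on the non-trivial branch derive rigidity from the stabilizer structure of the points of $Gr_l\QQ_p^n$.

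First, for any single $GL_n(\QQ)$-orbit $\mathcal{O}\subset Gr_l\QQ_p^n$, the preimage $f^{-1}(\mathcal{O})$ is $SL_n(\ZZ)$-invariant, since $SL_n(\ZZ)\subset GL_n(\QQ)$ preserves $GL_n(\QQ)$-orbits under the homomorphism $f$. By ergodicity of $SL_n(\ZZ)\actson Gr_k\QQ_p^n$, each $f^{-1}(\mathcal{O})$ is null or conull. If some $\mathcal{O}$ has conull preimage we are in the first alternative, so assume henceforth that every $GL_n(\QQ)$-orbit has null preimage. Similarly, the set $B=f^{-1}(Gr_l\QQ_p^n\setminus Fr(PGL_n(\QQ)\actson Gr_l\QQ_p^n))$ is $SL_n(\ZZ)$-invariant, since triviality of the stabilizer in $PGL_n(\QQ)$ is preserved by conjugation. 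Thus $B$ is null or conull; if null we obtain the second alternative, so assume for contradiction that $B$ is conull.

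On $B$ the assignment $\Sigma(x):=\stab_{PGL_n(\QQ)}(f(x))$ is a Borel map into the countable collection of nontrivial subgroups of $PGL_n(\QQ)$, and for $\gamma\in SL_n(\ZZ)$ with $f(\gamma x)=hf(x)$ we have $\Sigma(\gamma x)=h\Sigma(x)h^{-1}$. Hence $x\mapsto [\Sigma(x)]_{PGL_n(\QQ)}$ is an $SL_n(\ZZ)$-invariant map into the countable set of conjugacy classes of subgroups, so by ergodicity $\Sigma(x)$ is a.e.\ conjugate to a fixed subgroup $H$. Using a Borel selector on the countable conjugacy class of $H$, write $\Sigma(x)=c(x)Hc(x)^{-1}$ with $c(x)\in GL_n(\QQ)$ and set $y(x)=c(x)^{-1}f(x)\in S_H:=\{V\in Gr_l\QQ_p^n:\stab_{PGL_n(\QQ)}(V)=H\}$. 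Because any two elements of $S_H$ related by $GL_n(\QQ)$ are necessarily related by the normalizer $N=N_{GL_n(\QQ)}(H)$, the only $SL_n(\ZZ)$-invariant content of $y(x)$ is its $N$-orbit.

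The final step is to show that $x\mapsto N\cdot y(x)$ is a.e.\ constant, since this would put $f(x)$ into a single $GL_n(\QQ)$-orbit and contradict the reduction of the first paragraph. Following \cite[Lemma 5.1]{plocal}, this rests on the algebraic structure of the variety $\mathrm{Fix}(H)\subset Gr_l\QQ_p^n$ of $H$-invariant $l$-subspaces: a Jordan decomposition of a non-scalar element of $H$ canonically parameterizes $\mathrm{Fix}(H)$ by tuples of subspaces of the generalized eigenspaces, and this yields a Borel transversal for the $N$-action on $S_H$, hence a standard Borel structure on $S_H/N$ to which ergodicity applies in the usual way. This smoothness claim is the main technical obstacle in the argument, but it is a finite-dimensional representation-theoretic exercise over $\QQ_p$ rather than a further dynamical input.
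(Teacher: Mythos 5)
Your first two reduction steps are essentially sound: the preimages of orbits and of the non-free part are $SL_n(\ZZ)$-invariant, and after normalizing by a Borel choice of $c(x)$ the map $x\mapsto N\cdot y(x)$, $N=N_{GL_n(\QQ)}(H)$, is indeed invariant. (One smaller issue there: $PGL_n(\QQ)$ has uncountably many subgroups, so "countably many (conjugacy classes of) stabilizers" is not automatic; it is true, but needs an argument, e.g.\ that $\stab_{PGL_n(\QQ)}(V)$ coincides with the rational points of its Zariski closure, an algebraic subgroup defined over $\QQ$, of which there are only countably many.) The fatal problem is the final step. The claim that the $N$-action on $S_H$ admits a Borel transversal, i.e.\ that $S_H/N$ is standard Borel, is false --- already in Thomas's original case $l=n-1$ (equivalently $l=1$ by duality). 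Take $n=3$, $l=1$, let $E=\QQ e_2\oplus\QQ e_3$, and let $H$ be the image in $PGL_3(\QQ)$ of $\{g\in GL_3(\QQ):g|_E\text{ is a scalar}\}$; this $H$ is a realized stabilizer, $S_H$ is a co-countable subset of $P(E\otimes\QQ_p)\cong P^1(\QQ_p)$, and $N$ is the stabilizer of the rational plane $E$, acting on $P^1(\QQ_p)$ through all of $PGL_2(\QQ)$. That action is ergodic for the nonatomic Haar measure (already $SL_2(\ZZ)$, being dense in $SL_2(\ZZ_p)$, acts ergodically), and an ergodic countable Borel equivalence relation with respect to a nonatomic measure is never smooth. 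So there is no Borel transversal, $S_H/N$ is not standard, and ergodicity of the source action --- which gives a.e.\ constancy only for invariant maps into standard Borel spaces --- cannot be applied as you do.

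This is not a gap you can close by a more careful Jordan-decomposition analysis of $\mathrm{Fix}(H)$: after your normalization, what remains to prove is exactly that a Borel homomorphism from the ergodic $SL_n(\ZZ)$-action into the (typically non-smooth) orbit relation of $N$ on $S_H$ sends a conull set into a single $N$-class, i.e.\ an $F$-ergodicity statement of the very kind this paper is about. In the example above it amounts to showing that a homomorphism from $E_{SL_n\ZZ}^k$ into the orbit relation of $PGL_2(\QQ)$ on $P^1(\QQ_p)$ is a.e.\ constant up to orbits; that is true, but for dynamical reasons --- the target relation is hyperfinite because the action of $PGL_2(\QQ_p)$ on $P^1(\QQ_p)$ is amenable, while the source is $E_0$-ergodic because $SL_n(\ZZ)$ is Kazhdan and acts measure-preservingly and ergodically --- not because the quotient is smooth. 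Note also that the paper itself gives no proof of this lemma: it quotes Lemma 5.1 of \cite{plocal} and observes that Thomas's argument works for arbitrary $k,l$; that argument handles the non-free branch with dynamical input of roughly the above sort (strong ergodicity against amenability of the singular strata), and in any case cannot be replaced by the smoothness claim on which your last step rests.
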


  It is clear that the lemma establishes Claim (b); Claim (a) also
  follows by applying it in the case when $l=k$ and $f$ is the
  identity map on $Gr_k\QQ_p^n$.  (While Thomas only stated Lemma
  \ref{lem_thomas_free} for the special case when $k=n-1$, his
  argument goes through without change for arbitrary $k,l$.

  \begin{claim}
    We may suppose that there exists an ergodic component
    $\Gamma_0\actson X_0$ for the action $SL_n(\ZZ)\actson
    Gr_k\QQ_p^n$ and a homomorphism $\phi:\Gamma_0\rightarrow
    GL_n(\QQ)$ such that
    \[(\phi,f):\Gamma_0\actson X_0\longrightarrow GL_n(\QQ)\actson
      Gr_l\QQ_p^n
    \]
    is a homomorphism of permutation groups.
  \end{claim}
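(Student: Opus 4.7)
The plan is to apply Corollary \ref{cor_rigidity}. The immediate difficulty is that neither $SL_n(\ZZ)\actson Gr_k\QQ_p^n$ nor $GL_n(\QQ)\actson Gr_l\QQ_p^n$ is literally a free action, since the respective centers fix every subspace of $\QQ_p^n$. I would circumvent this by passing to the projective groups $PSL_n(\ZZ)$ and $PGL_n(\QQ)$, where Claims~(a) and~(b) respectively restore the needed freeness on conull sets.

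Next I would verify the hypotheses of Corollary \ref{cor_rigidity} with $\Gamma=PSL_n(\ZZ)$, $X=Gr_k\QQ_p^n$, $\Lambda=PGL_n(\QQ)$ acting on $Y=Fr(PGL_n(\QQ)\actson Gr_l\QQ_p^n)$. The group $PSL_n(\ZZ)$ is Kazhdan because $n\geq 3$ forces $SL_n(\ZZ)$ to be Kazhdan and Kazhdan's property passes to quotients. The action of $PSL_n(\ZZ)$ on $Gr_k\QQ_p^n$ is profinite in the sense of Definition \ref{defn_profinite}: the finite quotients $Gr_k\QQ_p^n/K_{p^t}$ form a cofinal system of $PSL_n(\ZZ)$-factors whose inverse limit is $Gr_k\QQ_p^n$. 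Ergodicity is recorded in the Grassmann example, and Claim~(a) delivers almost-freeness, so after deleting a null set we have a free ergodic profinite $PSL_n(\ZZ)$-space. Claim~(b) then lets us restrict to a conull subset of $X$ on which $f$ maps into $Y$.

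Applying Corollary \ref{cor_rigidity} yields an ergodic component $\bar\Gamma_0\actson X_0$ for the projective action, together with a permutation group homomorphism $(\bar\phi,f'):\bar\Gamma_0\actson X_0\rightarrow PGL_n(\QQ)\actson Y$ satisfying $f'(x)\mathrel{E_{PGL_n\QQ}}f(x)$ for every $x\in X_0$. The conclusion we ultimately want---that $f$ maps a conull set into a single $GL_n(\QQ)$-orbit---is invariant under pointwise $E_{GL_n\QQ}$-modification of $f$, so we may replace $f$ by $f'$ without loss of generality. Taking $\Gamma_0$ to be the preimage of $\bar\Gamma_0$ in $SL_n(\ZZ)$ yields a finite-index subgroup of $SL_n(\ZZ)$, and $X_0$ remains an ergodic component for $\Gamma_0\actson Gr_k\QQ_p^n$ since the center acts trivially.

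The main technical nuisance I anticipate is lifting $\bar\phi$ to an honest group homomorphism $\phi:\Gamma_0\to GL_n(\QQ)$. A set-theoretic lift automatically satisfies the intertwining identity $f(\gamma x)=\phi(\gamma)f(x)$ because the $GL_n(\QQ)$-action on $Gr_l\QQ_p^n$ factors through $PGL_n(\QQ)$, but its failure to be multiplicative is recorded by a $\QQ^*$-valued $2$-cocycle on $\Gamma_0$, with the existence of a multiplicative lift amounting to the vanishing of the corresponding class in $H^2(\Gamma_0,\QQ^*)$. I would handle this either by invoking known computations of the Schur multiplier for congruence subgroups of $SL_n(\ZZ)$ with $n\geq 3$, or by passing to a further normal finite-index subgroup of $\Gamma_0$ on which the obstruction becomes a coboundary, and then relabelling $\Gamma_0$ and $X_0$ accordingly to produce the desired $\phi$.
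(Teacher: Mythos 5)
Your reduction to Corollary \ref{cor_rigidity} is exactly the paper's route: pass to $PSL_n(\ZZ)$ and $PGL_n(\QQ)$, use Claims (a) and (b) to get freeness on conull sets, note that $Gr_k\QQ_p^n$ is a profinite ergodic $PSL_n(\ZZ)$-space, and replace $f$ by the perturbation $f'$, which is harmless since the eventual conclusion only depends on $f$ up to $E_{GL_n\QQ}$. The divergence, and the gap, is in the lifting step. The paper does not lift $\bar\phi$ by a cohomological splitting argument; it first disposes of the non-injective case via Margulis's normal subgroup theorem (finite image, hence $\bar\phi=1$ on a subcomponent, hence $f$ essentially constant, contradicting the standing hypothesis of Theorem \ref{thm_qiso_ergodic}), and in the injective case it applies superrigidity (Lemma \ref{lem_margulis}, via Lemma \ref{lem_margulis2} adapted to $PGL_n(\QQ)$) to see that $\bar\phi$ is, up to sign, of the explicit form $\chi_h\circ(-T)^i$; a map of that form visibly lifts to $\Gamma_0\rightarrow SL_n(\ZZ)$ by the same formula, and moreover with image of finite index in $SL_n(\ZZ)$ --- a property announced just before the claim and genuinely needed afterwards (for the finitely many ergodic components of $\phi(\Gamma_0)\actson Gr_l\QQ_p^n$ and for the hypotheses of Theorem \ref{thm_affine}).

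Your proposed lift has a real hole. You correctly identify the obstruction as the class in $H^2(\Gamma_0,\QQ^*)$ obtained by pulling back $1\rightarrow\QQ^*\rightarrow GL_n(\QQ)\rightarrow PGL_n(\QQ)\rightarrow1$ along $\bar\phi$, but neither of your two ways of killing it is justified. The group $H^2(\Gamma_0,\QQ^*)$ is far from trivial for congruence subgroups: $H_2(SL_n(\ZZ))$ is already nontrivial for $n\geq3$, and since $H_1(\Gamma_0)$ is finite (property (T)) while $\QQ^*$ is not divisible, the $\mathrm{Ext}$-contribution is nontrivial as well, so ``known Schur multiplier computations'' do not give vanishing. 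The fallback --- that the obstruction becomes a coboundary on some finite-index subgroup --- is false as a general principle precisely for higher-rank lattices: by Deligne's construction there are central extensions of such lattices (pulled back from topological covers of the ambient Lie group) that remain nonsplit on \emph{every} finite-index subgroup, and the congruence subgroup property for $SL_n(\ZZ)$, $n\geq3$, is what makes this phenomenon bite. So to rule out that your particular class is of this stubborn kind you need structural information about $\bar\phi$ itself, and that is exactly what Margulis superrigidity supplies; without it, your argument does not close. (Note also that even if your lift existed, it would only give some homomorphism into $GL_n(\QQ)$, not the finite-index image in $SL_n(\ZZ)$ that the subsequent steps of the proof rely on.)
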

  
  In the proof, we will in fact produce a $\phi$ such that
  $\phi(\Gamma_0)$ is a subgroup of $SL_n(\ZZ)$ of finite index.

  \begin{claimproof}
  Using Claims (a) and (b) together, it is not difficult to see that
  we may apply Corollary \ref{cor_rigidity} to suppose that there
  exists an ergodic component $\bar\Gamma_0\actson X_0$ for
  $PSL_n(\ZZ)\actson Gr_k\QQ_p^n$ and a homomorphism of permutation
  groups
  \[(\bar\phi,f):\bar\Gamma_0\actson X_0\longrightarrow
  PGL_n(\QQ)\actson Gr_l\QQ_p^n.
  \]
  We wish to lift $\phi$ to a map $\Gamma_0\rightarrow GL_n(\QQ)$,
  where $\Gamma_0$ is the preimage in $SL_n(\ZZ)$ of $\bar\Gamma_0$.

  First, suppose that $\bar\phi$ is not injective.  In this case, by
  Margulis's theorem on normal subgroups (Theorem 8.1.2 of
  \cite{zimmer}), the kernel of $\bar\phi$ has finite index in
  $\bar\Gamma_0$.  Hence, $\bar\phi$ has finite image and so passing
  to an ergodic subcomponent, we can suppose without loss of
  generality that $\bar\phi=1$.  This implies that $f$ is
  $\bar\Gamma_0$-invariant and since $\bar\Gamma_0\actson X_0$ is
  ergodic, $f$ is almost constant.  Hence, in this case $f$ maps a
  conull set into a single $GL_n(\QQ)$-orbit, which is a
  contradiction.

  Next, suppose that $\bar\phi$ is injective.  In this case, we shall
  again make use of Margulis's results.  The next lemma will be used
  in tandem with Lemma \ref{lem_margulis}.

  \begin{lem}
    \label{lem_margulis2}
    If $\Gamma_0\leq SL_n(\ZZ)$ is a finite index subgroup and
    $\phi:\Gamma_0\rightarrow GL_n(\QQ)$ is a homomorphism, then there
    exists a finite index subgroup $\Lambda\leq\Gamma_0$ such that
    $\phi(\Lambda)\leq SL_n(\ZZ)$ is a subgroup of finite index.
  \end{lem}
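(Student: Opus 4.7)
The plan is to parallel the proof of Lemma~\ref{lem_margulis}: first pass to a subgroup on which $\phi$ lands in $SL_n(\QQ)$, then apply Margulis--Mostow superrigidity to realise $\phi$ (up to a further finite-index subgroup) as conjugation by an element $h \in GL_n(\QQ)$ possibly composed with the inverse-transpose, and finally cut down to the subgroup on which this operation maps into $SL_n(\ZZ)$.

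First I would eliminate the determinant. For $n \geq 3$ the group $SL_n(\ZZ)$ has Kazhdan's property (T), so its finite-index subgroup $\Gamma_0$ has finite abelianization; hence $\det \circ \phi : \Gamma_0 \to \QQ^*$ has finite image, which must lie in $\{\pm 1\}$ since those are the only finite subgroups of $\QQ^*$. Replacing $\Gamma_0$ by $\ker(\det \circ \phi)$ I may therefore assume $\phi(\Gamma_0) \subseteq SL_n(\QQ)$. Since the lemma is invoked in a context where the associated $\bar\phi$ is injective (coming from Corollary~\ref{cor_rigidity}), Margulis's normal subgroup theorem additionally lets me assume $\ker\phi$ is finite.

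Next I would run the Zariski-closure and Mostow--Margulis argument from the proof of Lemma~\ref{lem_margulis} essentially verbatim, now with $SL_n(\QQ) \subseteq SL_n(\RR)$ as target: the Zariski closure of $\phi(\Gamma_0)$ in $SL_n(\RR)$ is semisimple; only one simple factor can carry infinite image; superrigidity forces that factor to be $SL_n(\RR)$; and so $\phi$ extends to an automorphism $\Phi = \chi_h \circ (-T)^i$ of $SL_n(\RR)$ for some $h \in GL_n(\RR)$ and $i \in \{0,1\}$. Because $\phi(\Gamma_0)$ is then a lattice in $SL_n(\RR)$ contained in $SL_n(\QQ)$, arithmeticity implies it is commensurable with $SL_n(\ZZ)$; consequently $h$ commensurates $SL_n(\ZZ)$, and the proof of \cite[Proposition~6.2.2]{zimmer} lets us rescale to obtain $h \in GL_n(\QQ)$. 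The case of even $n$ is absorbed by passing through $PSL_n$ as in Lemma~\ref{lem_margulis}.

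Finally, letting $\sigma$ denote either the identity or the inverse-transpose according to $i$, I would take
\[ \Lambda := \Gamma_0 \cap h\,\sigma(SL_n(\ZZ))\,h^{-1}. \]
Since $h \in GL_n(\QQ)$ and $\sigma$ preserves $SL_n(\ZZ)$, the subgroup $h\,\sigma(SL_n(\ZZ))\,h^{-1}$ is commensurable with $SL_n(\ZZ)$, so $\Lambda$ has finite index in $\Gamma_0$. By construction $\phi(\Lambda) \subseteq SL_n(\ZZ)$, and $\Lambda$'s being of finite index in $SL_n(\ZZ)$ means $\phi(\Lambda) = h^{-1}\sigma(\Lambda)h$ has finite index in the commensurable group $h^{-1}\sigma(SL_n(\ZZ))h$, and hence in $SL_n(\ZZ)$ itself. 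The technical crux is the commensurator step, i.e.\ verifying that $h$ can be chosen in $GL_n(\QQ)$ rather than merely in $GL_n(\RR)$; this is the same point that arose at the end of the proof of Lemma~\ref{lem_margulis}, and the argument there applies without essential change because it only uses that $\phi(\Gamma_0)$ is a lattice of $SL_n(\RR)$ lying inside $SL_n(\QQ)$.
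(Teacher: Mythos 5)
Your proposal is correct and follows essentially the same route as the paper: the paper likewise uses property (T) (via the finite-index commutator subgroup $[\Gamma_0,\Gamma_0]$ rather than $\ker(\det\circ\phi)$) to push the image into $SL_n(\QQ)$, reruns the superrigidity argument of Lemma~\ref{lem_margulis} to extend $\phi$ to an automorphism of $SL_n(\RR)$, and then cites \cite[IX.4.14]{margulis} to get that the lattice $\phi(\Gamma_0)\leq SL_n(\QQ)$ is commensurable with $SL_n(\ZZ)$. One small slip at the end: in the inverse-transpose case your $\Lambda=\Gamma_0\cap h\,\sigma(SL_n(\ZZ))h^{-1}$ need not map into $SL_n(\ZZ)$ (one wants $\Gamma_0\cap\sigma(h)SL_n(\ZZ)\sigma(h)^{-1}$, or more simply $\Lambda=\phi^{-1}(SL_n(\ZZ))$, which the commensurability you have already established makes finite index with $\phi(\Lambda)$ of finite index in $SL_n(\ZZ)$).
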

  
  \begin{proof}
    Since $\Gamma_0$ is Kazhdan (see Theorem 1.5 of \cite{lubotzky}),
    we have that $\Gamma_0':=[\Gamma_0,\Gamma_0]$ is a finite index
    subgroup of $\Gamma_0$ (see Corollary 1.29 of \cite{lubotzky}).
    Now since $GL_n(\QQ)/SL_n(\QQ)\iso\QQ^\times$ is abelian, we have
    that
    \[\phi(\Gamma_0')\leq [GL_n(\QQ),GL_n(\QQ)]\leq SL_n(\QQ).
    \]
    (In fact, the latter $\leq$ is an equality.)  Hence, replacing
    $\Gamma_0$ by $\Gamma_0'$ if necessary, we may suppose without
    loss of generality that $\phi(\Gamma_0)\subset SL_n(\QQ)$.
    Repeating the proof of Lemma \ref{lem_margulis}, after slightly
    adjusting $\phi$ if necessary, we may suppose that it extends to
    an automorphism of $SL_n(\RR)$ (the adjustment is by $\epsilon$,
    in the notation of \ref{lem_margulis}).  It follows that
    $\phi(\Gamma_0)$ is again a lattice of $SL_n(\RR)$.  Since
    $\phi(\Gamma_0)\subset SL_n(\QQ)$, by \cite[IX.4.14]{margulis} we
    have that $\phi(\Gamma_0)$ is commensurable with $SL_n(\ZZ)$, and
    the lemma follows.
  \end{proof}
  
  Although Lemma \ref{lem_margulis2} has been stated so that it will
  be useful later on, for the present circumstances let us note that
  the same proof easily applies to the case of homomorphisms into
  $PGL_n(\QQ)$.  In other words, replacing $X_0$ with a smaller
  ergodic component, we may suppose without loss of generality that
  $\bar\phi(\bar\Gamma_0)$ is a subgroup of $PSL_n(\ZZ)$ of finite
  index.  Moreover, the proof of Lemma \ref{lem_margulis} shows that
  $\bar\phi$ lifts to a homomorphism $\phi:\Gamma_0\rightarrow
  SL_n(\ZZ)$.  Since $\phi$ is a lifting, we easily obtain that
  $f(\gamma x)=\phi(\gamma)f(x)$ for all $\gamma\in\Gamma_0$, which
  completes the proof of the claim.
  \end{claimproof}

  We now wish to maneuver into a situation where we can apply Theorem
  \ref{thm_affine}.

  \begin{claim}
    We may suppose that $f(X_0)\subset X_1$, where $X_1$ is an ergodic
    component for the action of $\phi(\Gamma_0)$ on $Gr_l\QQ_p^n$.
  \end{claim}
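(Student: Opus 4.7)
The plan is to exploit the ergodicity of $\Gamma_0\actson X_0$ together with the fact that $\phi(\Gamma_0)$, having finite index in $SL_n(\ZZ)$, partitions $Gr_l\QQ_p^n$ into only finitely many ergodic components.

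First, I would apply the finite-index ergodic decomposition described in Section~2 to write (a conull subset of) $Gr_l\QQ_p^n$ as a disjoint union $Y_1\sqcup\cdots\sqcup Y_N$ of $\phi(\Gamma_0)$-invariant Borel sets of positive measure, on each of which $\phi(\Gamma_0)$ acts ergodically. This is justified because $SL_n(\ZZ)\actson Gr_l\QQ_p^n$ is ergodic and, as noted at the end of the proof of the previous claim, $\phi(\Gamma_0)$ may be arranged to be a finite-index subgroup of $SL_n(\ZZ)$.

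Next, I would define a Borel function $g:X_0\to\{1,\ldots,N\}$ by setting $g(x)=j$ iff $f(x)\in Y_j$. The equivariance $f(\gamma x)=\phi(\gamma)f(x)$, together with the $\phi(\Gamma_0)$-invariance of each $Y_j$, immediately yields that $g$ is $\Gamma_0$-invariant. By the ergodicity of $\Gamma_0\actson X_0$, there is some $i$ such that $X_0':=g^{-1}(i)$ is conull in $X_0$. Setting $X_1:=Y_i$ and replacing $X_0$ by the $\Gamma_0$-invariant conull Borel subset $X_0'$ --- which does not affect the ergodic component structure up to null sets --- we achieve $f(X_0)\subset X_1$, as required.

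No serious obstacle should arise; the argument is a direct application of ergodicity once one observes that the ``which ergodic component'' function is $\Gamma_0$-invariant. The only delicate point is checking that shrinking $X_0$ to a $\Gamma_0$-invariant conull subset remains consistent with the framework of ergodic components (which are defined only up to null sets), but this is routine within the setup already established in Section~2.
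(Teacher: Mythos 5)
Your argument is correct and is essentially the paper's own proof: both decompose $Gr_l\QQ_p^n$ into the finitely many ergodic components of the finite-index subgroup $\phi(\Gamma_0)$, observe via equivariance that each preimage under $f$ is $\Gamma_0$-invariant, and invoke ergodicity of $\Gamma_0\actson X_0$ to conclude exactly one preimage is conull, then discard a null set. Your ``which component'' function $g$ is just a repackaging of the same invariance-of-preimages observation, so there is nothing substantively different to compare.
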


  \begin{claimproof}
    Let $Z_1,\ldots,Z_m$ be the ergodic components for the action of
    $\phi(\Gamma_0)$ on $Gr_l\QQ_p^n$.  Now, each $f^{-1}(Z_i)$ is
    $\Gamma_0$-invariant, and since $\Gamma_0\actson X_0$ is ergodic,
    exactly one of the $f^{-1}(Z_i)$ is conull.  Deleting a null
    subset of $X_0$, we may suppose that $f(X_0)\subset Z_i$, as
    desired.
  \end{claimproof}

  Finally, we may apply Theorem \ref{thm_affine} to conclude that
  $l=k$ or $l=n-k$, contradicting our initial hypothesis.  This
  completes the proof of Theorem \ref{thm_qiso_ergodic}.
\end{proof}

\section{Completion of local torsion-free abelian groups}

In this section, we recall some facts surrounding the Kurosh-Malcev
completion of a $p$-local torsion-free abelian group.  The completion
map will allow us to relate the space of $p$-local torsion-free
abelian groups of a fixed divisible rank with the $k$-Grassmann space
$Gr_k\QQ_p^n$.  Using this idea, Thomas essentially showed
\cite[Theorem 4.7]{torsionfree} that the quasi-isomorphism relation
$\oqiso_{n,p}^k$ on $R(n,p,k)$ is bireducible with the orbit
equivalence relation $E_{GL_n\QQ}^k$ induced by the action of
$GL_n(\QQ)$ or $Gr_k\QQ_p^n$.  (Together with Theorem
\ref{thm_glnq_separate}, this implies that $\oqiso_{n,p}^k$ is Borel
incomparable with $\oqiso_{n,p}^l$ for $l\neq k,n-k$.)  We shall use
the completion to establish the following more technical result.

\begin{lem}
  \label{lem_containments}
  There exists an equivalence relation $E_\oiso^k$ on the space
  $Gr_k\QQ_p^n$ satisfying:
  \begin{enumerate}
  \item $E_{SL_n\ZZ}^k\subset E_\oiso^k\subset E_{GL_n\QQ}^k$, and
  \item $E_{\oiso}^k$ is Borel bireducible with the
    isomorphism relation $\oiso_{n,p}^k$ on $R(n,p,k)$.
  \end{enumerate}
\end{lem}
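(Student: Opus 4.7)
The plan uses the Kurosh-Malcev $p$-adic completion to bridge $R(n,p,k)$ and $Gr_k\QQ_p^n$, refining Thomas's bireducibility $\oqiso_{n,p}^k\sim_B E_{GL_n\QQ}^k$ so as to capture isomorphism rather than only quasi-isomorphism. For $A\in R(n,p,k)$, form $\hat A:=A\otimes_\ZZ\ZZ_p\subset\QQ_p^n$. Standard facts give that $A\mapsto\hat A$ is a Borel injection, its maximal $\QQ_p$-divisible subspace $V(A)$ lies in $Gr_k\QQ_p^n$, and $A\iso B$ iff $g\hat A=\hat B$ for some $g\in GL_n(\QQ)$. In the reverse direction, for $V\in Gr_k\QQ_p^n$ define the canonical completion $M(V):=V+\ZZ_p^n$ and the associated group $A(V):=M(V)\cap\QQ^n$, which lies in $R(n,p,k)$ with $\widehat{A(V)}=M(V)$ and $V(A(V))=V$.

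Define the equivalence relation $E_\oiso^k$ on $Gr_k\QQ_p^n$ by
\[
V\mathrel{E_\oiso^k}W\iff A(V)\iso A(W)\iff\exists g\in GL_n(\QQ)\text{ with }gM(V)=M(W).
\]
The sandwich (a) is immediate: if $\gamma\in SL_n(\ZZ)$ satisfies $\gamma V=W$ then $\gamma\ZZ_p^n=\ZZ_p^n$ yields $\gamma M(V)=M(W)$, so $E_{SL_n\ZZ}^k\subset E_\oiso^k$; and if $gM(V)=M(W)$ for $g\in GL_n(\QQ)$ then $g$ must send the divisible part of $M(V)$ to the divisible part of $M(W)$, so $gV=W$ and $E_\oiso^k\subset E_{GL_n\QQ}^k$.

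For the Borel bireducibility (b), the map $V\mapsto A(V)$ is by definition a Borel reduction of $E_\oiso^k$ to $\oiso_{n,p}^k$. The converse reduction $\oiso_{n,p}^k\leq_B E_\oiso^k$ would be witnessed by a Borel map $\Psi:R(n,p,k)\to Gr_k\QQ_p^n$ sending each $A$ to some $V_A$ with $A\iso A(V_A)$, via a Borel selection.

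The main obstacle is this converse reduction. Because $GL_n(\QQ)$ is countable while each fiber of the forgetful map $\hat A\mapsto V(A)$ is uncountable, the naive candidate $\Psi(A)=V(A)$ (which witnesses only $\oqiso_{n,p}^k\leq_B E_{GL_n\QQ}^k$ in the manner of Thomas) cannot distinguish non-isomorphic $A$'s sharing the same divisible part. To overcome this, one may either (i) enlarge the canonical completions to a Borel family $\{V+L_V:V\in Gr_k\QQ_p^n\}$ of lattice-shifted subspaces in which every $\hat A$ is $GL_n(\QQ)$-equivalent to some $V+L_V$, using a Borel isomorphism between the parameter space of pairs $(V,L_V)$ and $Gr_k\QQ_p^n$, or (ii) apply a selector argument on the principal congruence components of \S2 to Borelly choose a representative in each iso-class of the form $A(V)$ after restricting to the free part of the action. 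The delicate point is ensuring that the refined $E_\oiso^k$ retains the sandwich property in (a), and this compatibility is what drives the particular choice of $M(V)=V+\ZZ_p^n$ (for which $SL_n(\ZZ)$-invariance is transparent) as the starting point.
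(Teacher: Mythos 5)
Your relation is in fact the same one the paper uses: since the canonical complementary subspace $V^c$ is chosen so that $V+\ZZ_p^n$ decomposes as $V\oplus L_0[V]$, your $M(V)=V+\ZZ_p^n$ is exactly the module the paper denotes $(V,L_0)$, and your proof of the sandwich property (a) (using $\gamma\ZZ_p^n=\ZZ_p^n$ for $\gamma\in SL_n(\ZZ)$, and matching of divisible parts for the other inclusion) is correct and even a little cleaner than the paper's. The direction $E_\oiso^k\leq_B\;\oiso_{n,p}^k$ via $V\mapsto A(V)$ is also fine, since the Kurosh--Malcev completion is a $GL_n(\QQ)$-equivariant bijection between $R(n,p,k)$ and $\mathcal M(n,p,k)$.

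However, the converse reduction $\oiso_{n,p}^k\leq_B E_\oiso^k$ --- the heart of part (b) --- is left as an acknowledged gap, and the strategies you sketch do not close it. What is needed is precisely the statement that $Y_0=\{M(V):V\in Gr_k\QQ_p^n\}$ is a \emph{complete} Borel section for the $GL_n(\QQ)$-action on $\mathcal M(n,p,k)$: every completed group $\hat A=(V,L)$ lies in the $GL_n(\QQ)$-orbit of some $(V',L_0)$. This follows because every $\ZZ_p$-lattice of $\QQ_p^{n-k}$ has a \emph{rational} basis ($\QQ$ is dense in $\QQ_p$; incidentally this also shows the fibers of $M\mapsto V_M$ are countable, not uncountable as you assert): writing $V$ as the column space of a matrix with identity block on the pivot rows and $p$-adic integral entries, one picks $h\in GL_{n-k}(\QQ)$ with $hL=L_0$ and a rational $j$ making $j+hv$ integral, and the block matrix with blocks $I_k,0,j,h$ carries $(V,L)$ to $(gV,L_0)$. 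Once this completeness is established, the standard fact that a countable Borel equivalence relation is Borel bireducible with its restriction to any complete Borel section (a Lusin--Novikov selection, applied to the countable group $GL_n(\QQ)$) yields the reduction $\oiso_{n,p}^k\leq_B E_\oiso^k$, and no ``refinement'' of the family $\{M(V)\}$ or restriction to free parts is required --- so your worry that a modified relation might lose property (a) is moot. As stated, though, your option (i) (re-parametrizing an enlarged family of modules by $Gr_k\QQ_p^n$ via an arbitrary Borel bijection) would indeed destroy the containment $E_{SL_n\ZZ}^k\subset E_\oiso^k$, and option (ii) never explains why every isomorphism class contains a group of the form $A(V)$, which is exactly the missing point; moreover, measure-theoretic restrictions are not available here, since (b) is a genuine everywhere bireducibility claim.
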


Using this, we can already prove:

\begin{thm}[Theorem A, case 1]
  \label{thm_Acase1}
  Let $n\geq4$ and $k,l<n$, and suppose that $l$ is neither $k$ nor
  $n-k$.  Then $\oiso_{n,p}^k$ is Borel incomparable with
  $\oiso_{n,p}^l$.
\end{thm}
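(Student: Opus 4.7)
The plan is to pull the problem back to the Grassmann setting via Lemma \ref{lem_containments} and then apply the ergodicity result of Theorem \ref{thm_qiso_ergodic}. By Lemma \ref{lem_containments}(b), $\oiso_{n,p}^k \sim_B E_\oiso^k$ and likewise $\oiso_{n,p}^l \sim_B E_\oiso^l$, so it suffices to show that $E_\oiso^k$ and $E_\oiso^l$ are Borel incomparable on the respective Grassmann spaces. Since the hypothesis $l \notin \{k, n-k\}$ is symmetric under swapping $k$ and $l$ (as $l = n-k$ iff $k = n-l$), I only need to establish one direction, say $E_\oiso^k \not\leq_B E_\oiso^l$.

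So I suppose, toward a contradiction, that $f : Gr_k\QQ_p^n \to Gr_l\QQ_p^n$ is a Borel reduction from $E_\oiso^k$ to $E_\oiso^l$. The key observation is that the sandwich $E_{SL_n\ZZ}^k \subset E_\oiso^k$ on the source side and $E_\oiso^l \subset E_{GL_n\QQ}^l$ on the target side, both furnished by Lemma \ref{lem_containments}(a), forces $f$ to be in particular a Borel homomorphism from $E_{SL_n\ZZ}^k$ to $E_{GL_n\QQ}^l$. Theorem \ref{thm_qiso_ergodic} then produces a Haar-conull set $A \subset Gr_k\QQ_p^n$ whose image lies in a single $GL_n(\QQ)$-orbit $\mathcal O \subset Gr_l\QQ_p^n$.

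To finish I count. Since $GL_n(\QQ)$ is countable, $\mathcal O$ is countable, so $f|_A$ takes only countably many values. For each $z \in f(A)$, the preimage $f^{-1}(\{z\})$ lies in a single $E_\oiso^k$-class because $f$ is a \emph{reduction} (not merely a homomorphism), and each $E_\oiso^k$-class is countable since $E_\oiso^k \subset E_{GL_n\QQ}^k$, which is a countable Borel equivalence relation. Hence $A$ is a countable union of countable sets, hence countable, contradicting the nonatomicity of the Haar measure on $Gr_k\QQ_p^n$.

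The analytic heavy lifting has already been done in Theorem \ref{thm_qiso_ergodic} (via the cocycle superrigidity argument of the previous section) and in Lemma \ref{lem_containments} (via the Kurosh--Malcev translation of the next section), so I do not anticipate any serious obstacle. The one point worth checking carefully is that the sandwich in Lemma \ref{lem_containments}(a) is used in exactly the right direction: the lower inclusion $E_{SL_n\ZZ}^k \subset E_\oiso^k$ is needed on the source side and the upper inclusion $E_\oiso^l \subset E_{GL_n\QQ}^l$ on the target side, so that any Borel reduction between the $\oiso$-relations automatically produces a Borel homomorphism from $E_{SL_n\ZZ}^k$ to $E_{GL_n\QQ}^l$ --- precisely the shape of map that Theorem \ref{thm_qiso_ergodic} can digest.
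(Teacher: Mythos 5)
Your proposal is correct and follows essentially the same route as the paper: reduce to the Grassmann relations via Lemma \ref{lem_containments}(b), use the sandwich of part (a) to turn a reduction between $E_\oiso^k$ and $E_\oiso^l$ into a (countable-to-one) homomorphism from $E_{SL_n\ZZ}^k$ to $E_{GL_n\QQ}^l$, and contradict Theorem \ref{thm_qiso_ergodic}. Your closing counting argument merely inlines the general implication \eqref{eqn_Fergodicity} (``$F$-ergodic $\Rightarrow$ no weak Borel reduction'') that the paper cites, so there is no substantive difference.
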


\begin{proof}
  By part (b) of Lemma \ref{lem_containments}, it suffices to prove
  that the relations $E_\oiso^k$ and $E_\oiso^l$ are Borel
  incomparable.  Now, if $f$ is a Borel reduction from $E_\oiso^k$ to
  $E_\oiso^l$, then using part (a) of Lemma \ref{lem_containments} we
  clearly have that $f$ is a weak Borel reduction from $E_{SL_n\ZZ}^k$
  to $E_{GL_n\QQ}^l$.  This contradicts Theorem
  \ref{thm_qiso_ergodic}.
\end{proof}

\subsection*{The completion map}

Following \cite{fuchs}, for $A\in R(n,p)$ we define the
\emph{completion} of $A$ to a $\ZZ_p$-submodule of $\QQ_p^n$ by
\[\Lambda(A):=A\otimes\ZZ_p.
\]
In other words, $\Lambda(A)$ is just the set of all $\ZZ_p$-linear
combinations of elements of $A$ where $A$ is considered as a subset of
$\QQ_p^n$.  The completion map takes values in the standard Borel
space $\mathcal M(n,p)$ of $\ZZ_p$-submodules of $\QQ_p^n$ with
$\ZZ_p$-rank exactly equal to $n$.  In fact, by 93.1 and 93.5 of
\cite{fuchs}, $\Lambda$ is a bijection between $R(n,p)$ and $\mathcal
M(n,p)$.

Now, if $M\in\mathcal M(n,p)$, then by 93.3 of \cite{fuchs} $M$ can be
decomposed as a direct sum
\begin{equation}
  \label{eqn_decompose}
  M=V_M\oplus L
\end{equation}
where $V_M$ is a vector subspace of $\QQ_p^n$ and $L$ is a free
$\ZZ_p$-submodule of $\QQ_p^n$.  The vector subspace $V_M$ is uniquely
determined by $M$.  There are many possible complementary submodules
$L$, but in any case we have $\rank L=n-\dim V$.

By exercise 93.1 of \cite{fuchs}, the dimension of $V_{\Lambda(A)}$ is
precisely the divisible rank of $A$.  Letting $\mathcal M(n,p,k)$
denote the subspace of $\mathcal M(n,p)$ consisting of just those $M$
with $\dim V_M=k$, it follows that $\Lambda$ is a bijection of
$R(n,p,k)$ with $\mathcal M(n,p,k)$.  Since $\Lambda$ is
$GL_n(\QQ)$-invariant, we have that $\oiso_{n,p}^k$ is Borel
equivalent (\emph{i.e.}, isomorphic via a Borel map) to the orbit
equivalence relation induced by the action of $GL_n(\QQ)$ on $\mathcal
M(n,p,k)$.

\subsection*{Decomposition of the space of completed groups}

We now investigate the fibers of the ``vector space part'' map from
$\mathcal M(n,p,k)$ to $Gr_k\QQ_p^n$ defined by $M\mapsto V_M$, where
$V_M$ is as in \eqref{eqn_decompose}.  So, fix $V\in Gr_k\QQ_p^n$ and
let $M\in\mathcal M(n,p,k)$ be an arbitrary module such that $V_M=V$.
If $W$ is any \emph{complementary} subspace of $V$, meaning that
$V\cap W=0$ and $V\oplus W=\QQ_p^n$, then $M$ can always be written
(uniquely) as $V\oplus L$ where $L\leq W$.  Since $L$ is free and
$\rank L=\dim W$, the set $\{M\in\mathcal M(n,p,k):V_M=V\}$ is in
bijection with the lattices of $W$.

\begin{defn}
  Let $K$ be a discrete valuation field and $R$ its ring of integers.
  Then a \emph{lattice} of $K^l$ is a free $R$-submodule of $K^l$ of
  rank $l$.  Equivalently, a lattice of $K^l$ is the $R$-span of $l$
  linearly independent elements of $K^l$.  We denote the set of
  lattices of $K^l$ by $\mathcal L(K^l)$.
\end{defn}

By the discussion in \cite[Section II.1.1]{serre}, since $\QQ$ is a
dense subfield of $\QQ_p$, the map $L\mapsto\ZZ_p\otimes L$ is a
bijection between the lattices of $\QQ^l$ (with respect to the
$p$-adic valuation on $\QQ^*$) and the lattices of $\QQ_p^l$.  Hence,
any lattice of $\QQ_p^l$ may be expressed as the $\ZZ_p$-span of $l$
linearly independent elements of $\QQ^l$.  In particular, there are
only countably many lattices of $\QQ_p^l$ and so the vector space part
map $M\mapsto V_M$ is countable-to-one.

Since any countable-to-one Borel function between standard Borel
spaces admits a Borel section, there clearly exists a Borel bijection
$f:Gr_k\QQ_p^n\times\mathcal L(\QQ_p^{n-k})\rightarrow\mathcal
M(n,p,k)$ satisfying $V_{f(V,L)}=V$ for all $V,L$.  It will be useful
to work with a particular such function.

\begin{defn}
  If $V\in Gr_k\QQ_p^n$, let $V^c$ be the unique complementary
  subspace of $V$ spanned by basis vectors
  $e_{j_1},\ldots,e_{j_{n-k}}$ with the following properties:
  \begin{enumerate}
  \item The $\ZZ_p$-span of
    $(V\cap\ZZ_p^n)\cup(\ZZ_pe_{j_1}\oplus\cdots\oplus\ZZ_pe_{j_{n-k}})$
    is all of $\ZZ_p^n$, and
  \item $\langle j_i\rangle$ is the lexicographically \emph{greatest}
    sequence satisfying (a).
  \end{enumerate}
  We call $V^c$ the \emph{canonical complementary subspace} of $V$.
\end{defn}

We now discuss how to find and identify such a sequence $\langle
j_i\rangle$; in particular we will show that $V^c$ exists.  The key is
that we can write $V$ as the column space of a $n\times k$ matrix $A$
satisfying:
\begin{itemize}
\item Each row of the $k\times k$ identity matrix appears as a row of
  $A$ (call these the \emph{pivot} rows), and
\item Every entry of $A$ is in $\ZZ_p$.
\end{itemize}
(To obtain such a matrix, begin with an arbitrary matrix whose column
space is $V$.  Rescale the first column so that all entries are
$p$-adic integers and at least one entry is $1$.  Use this $1$ to zero
out the other entries in its row.  Repeat for the second column, etc.)
It is easily seen that the sequence $j_i$ of indices of the
\emph{non-pivot} rows of $A$ satisfies (a).  Our requirement that
$\langle j_i\rangle$ is lex-greatest amounts to the more natural
assertion that the sequence of indices of the pivot rows of $A$ is
lex-least.

For example, we have already seen in \eqref{eqn_Z0} that any
$V\in(K_{p^t})V_0$ can be written as the column space of a matrix of
the form $\bracks{I_k}{v}$, where the entries of $v$ are in $\ZZ_p$.
It follows that $V$ has the canonical complementary subspace
$V^c=\QQ_pe_{k+1}\oplus\cdots\oplus\QQ_pe_n$.

Now, for a lattice $L\leq\QQ_p^{n-k}$, let $L[V]$ be the isomorphic
copy of $L$ inside $V^c$ induced by following the obvious map (that
is, the linear map defined by $e_i\mapsto e_{j_i}$).  We define the
\emph{adjoining} of $V$ and $L$ by
\[(V,L):=V\oplus L[V].
\]
It is clear from our construction that adjoining defines a Borel
bijection
\[(\cdot,\cdot):Gr_k\QQ_p^n\times\mathcal L(\QQ_p^{n-k})\rightarrow\mathcal M(n,p,k),
\]
which we shall use to parametrize the elements of $\mathcal
M(n,p,k)$.

\subsection*{Relations on $k$-Grassmann space}

Our approach to Lemma \ref{lem_containments} will be to investigate
the ``canonical'' copy of $Gr_k\QQ_p^n$ in $\mathcal M(n,p,k)$.
Letting $L_0$ denote the standard lattice $\ZZ_p^{n-k}$ of
$\QQ_p^{n-k}$, we put $Y_0:=\{(V,L_0):V\in Gr_k\QQ_p^n\}$.  We first
give the following characterization.

\begin{prop}
  \label{prop_invariant}
  $Y_0$ is precisely the orbit $(SL_n\ZZ_p)M_0$, where $M_0$ is the
  module
  \[M_0=(V_0,L_0)=(\QQ_pe_1\oplus\cdots\oplus\QQ_pe_k)\oplus(\ZZ_pe_{k+1}\oplus\cdots\oplus\ZZ_pe_n)
  \]
\end{prop}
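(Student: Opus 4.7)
The plan is to verify the equality by showing that both $(SL_n\ZZ_p)M_0$ and $Y_0$ coincide with the set $\mathcal{S} := \{V + \ZZ_p^n : V \in Gr_k\QQ_p^n\}$.

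For the orbit side, I first observe that $M_0 = V_0 + \ZZ_p^n$: in the defining expression for $M_0$, the summand $\ZZ_p e_i$ with $i \leq k$ is absorbed by $\QQ_p e_i$, while the summands with $i > k$ already appear in $\ZZ_p^n$.  Since every $g \in SL_n(\ZZ_p)$ stabilizes $\ZZ_p^n$, it follows that $gM_0 = gV_0 + \ZZ_p^n$.  Combined with the transitivity of $SL_n(\ZZ_p)$ on $Gr_k\QQ_p^n$ recalled earlier in this subsection, this gives $(SL_n\ZZ_p)M_0 = \mathcal{S}$.

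For $Y_0$, the claim reduces to showing that $(V, L_0) = V + \ZZ_p^n$ for every $V \in Gr_k\QQ_p^n$.  By construction $(V, L_0) = V + L_0[V]$ where $L_0[V] = \ZZ_p e_{j_1} \oplus \cdots \oplus \ZZ_p e_{j_{n-k}}$, with $j_1, \ldots, j_{n-k}$ the non-pivot row indices from the canonical matrix representation of $V$.  Since $L_0[V] \subset \ZZ_p^n$, the inclusion $(V, L_0) \subset V + \ZZ_p^n$ is immediate.  For the reverse inclusion I use the matrix $A$ with columns $c_1, \ldots, c_k$ spanning $V$, whose pivot rows form $I_k$ and all of whose entries lie in $\ZZ_p$: each non-pivot basis vector $e_{j_i}$ lies in $L_0[V]$ directly, while each pivot basis vector $e_{r_l}$ satisfies $e_{r_l} = c_l - \sum_i a_{j_i,l}\,e_{j_i}$, which lies in $V + L_0[V]$ because $c_l \in V$ and the coefficients $a_{j_i,l}$ lie in $\ZZ_p$.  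Hence $\ZZ_p^n \subset V + L_0[V]$, which yields $V + \ZZ_p^n \subset (V, L_0)$ and completes the argument.

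The only real subtlety is bookkeeping with the canonical complementary subspace $V^c$ and the accompanying pivot/non-pivot decomposition of $A$; once this setup is in place, the computation is simply the familiar observation that the columns of a row-echelon matrix over $\ZZ_p$, together with the non-pivot standard basis vectors, form a $\ZZ_p$-basis of $\ZZ_p^n$.
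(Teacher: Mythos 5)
Your proof is correct, but it takes a genuinely different route from the paper's. The paper argues orbit-wise: it shows $g(V_0,L_0)\in Y_0$ for every $g\in SL_n(\ZZ_p)$, first in the case $gV_0=V_0$ (where $g$ acts on the complement through its lower right-hand block $g^c\in GL_{n-k}(\ZZ_p)$, which stabilizes $L_0$), and then in general by composing with an explicit lower-triangular unipotent block matrix $g_1$ to reduce to that case, after which transitivity of $SL_n(\ZZ_p)$ on $Gr_k\QQ_p^n$ identifies $Y_0$ with the orbit of $M_0$. You instead establish the closed-form identity $(V,L_0)=V+\ZZ_p^n$ for every $V$, so that $Y_0=\{V+\ZZ_p^n:V\in Gr_k\QQ_p^n\}$, a set that is visibly a single $SL_n(\ZZ_p)$-orbit containing $M_0$ because $SL_n(\ZZ_p)$ stabilizes $\ZZ_p^n$ and acts transitively on the Grassmannian. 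This buys a cleaner, coordinate-free description of $Y_0$ with no case analysis (and it makes the subsequent remark about $SL_n(\ZZ_{(p)})$-invariance immediate), at the modest cost of needing that the matrix $A$ can be taken with its non-pivot rows exactly at the indices $j_1,\ldots,j_{n-k}$ defining $V^c$ --- which the discussion preceding the proposition supplies. In fact your reverse inclusion can be shortened further: clause (a) in the definition of the canonical complementary subspace says precisely that the $\ZZ_p$-span of $(V\cap\ZZ_p^n)\cup L_0[V]$ is all of $\ZZ_p^n$, so $\ZZ_p^n\subset V+L_0[V]$ follows at once; your pivot computation essentially re-derives that clause, which is fine but not necessary.
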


\begin{proof}
  We need only show that for any $g\in SL_n(\ZZ_p)$, we have
  $g(V_0,L_0)\in Y_0$.  Then, since $SL_n(\ZZ_p)$ acts transitively on
  $Gr_k\QQ_p^n$, it follows that $Y_0$ is precisely the orbit
  $(SL_n\ZZ_p)(V_0,L_0)$.

  Suppose first that $g(V_0,L_0)$ is of the form $(V_0,L)$, in other
  words suppose that $gV_0=V_0$.  In this case, $g$ acts on the
  quotient $\QQ_p^n/V_0$ (in the basis represented by
  $e_{k+1},\ldots,e_n$) via its $(n-k)\times(n-k)$ lower right-hand
  corner $g^c$.  It follows easily that $g(V_0,L_0)=(V_0,g^cL_0)$, and
  since the entries of $g^c$ lie in $\ZZ_p$, we clearly have
  $g^cL_0=L_0$.  Hence, $g(V_0,L_0)=(V_0,L_0)$ is an element of $Y_0$.

  Now suppose that $g(V_0,L_0)=(V,L)$ is arbitrary.  It suffices to
  show there exists $g_1\in SL_n(\ZZ_p)$ such that $g_1(V_0,L)=(V,L)$,
  for then, $g_1^{-1}g(V_0,L_0)=(V_0,L)$ and we are in the previous
  case.  Permuting the standard basis if necessary, we can suppose
  that $V=\col\bracks{I_k}{v}$, where the entries of $v$ are in
  $\ZZ_p$.  It follows easily that $g_1:=\sammatrix{I_k&0\\v&I_{n-k}}$
  satisfies our requirements.
\end{proof}

\begin{rem}
  While $Y_0$ is not invariant for the action of $GL_n(\QQ)$, the last
  proposition shows in particular that $Y_0$ is invariant for the
  action of the subgroup $SL_n(\ZZ_{(p)})$.  However, it is not
  difficult to see that the restriction of the orbit equivalence
  relation induced by the action of $GL_n(\QQ)$ on $\mathcal M(n,p,k)$
  to $Y_0$ is not induced by the action of any subgroup of
  $GL_n(\QQ)$.
\end{rem}

\begin{prop}
  \label{prop_complete_section}
  $Y_0$ is a complete Borel section for the orbit equivalence relation
  induced by the action of $GL_n(\QQ)$ on $\mathcal M(n,p,k)$.
\end{prop}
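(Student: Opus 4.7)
The plan is to prove, by combining transitivity of $GL_n(\QQ_p)$ on $\mathcal M(n,p,k)$ with density of $GL_n(\QQ)$ in $GL_n(\QQ_p)$, that every $GL_n(\QQ)$-orbit on $\mathcal M(n,p,k)$ meets $Y_0$. The first input is that $GL_n(\QQ_p)$ acts transitively on $\mathcal M(n,p,k)$: given $M \in \mathcal M(n,p,k)$, decompose $M = V_M \oplus L$ with $L$ a free $\ZZ_p$-module complementing $V_M$, and then map a $\QQ_p$-basis of $V_M$ concatenated with a $\ZZ_p$-basis of $L$ to the standard basis of $\QQ_p^n$; this produces an element $\tilde g \in GL_n(\QQ_p)$ with $\tilde g M = M_0$.

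The second ingredient is a slight strengthening of Proposition \ref{prop_invariant}, namely $GL_n(\ZZ_p) \cdot M_0 = Y_0$. Indeed, every element of $GL_n(\ZZ_p)$ factors as an element of $SL_n(\ZZ_p)$ times a diagonal matrix of the form $\mathrm{diag}(u,1,\ldots,1)$ for some $u \in \ZZ_p^\times$; such a diagonal matrix scales only the first coordinate and hence fixes both $V_0$ and $L_0[V_0]$, so $GL_n(\ZZ_p) M_0 = SL_n(\ZZ_p) M_0 = Y_0$.

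To close the argument I use that $GL_n(\ZZ_p)$ is an open subgroup of $GL_n(\QQ_p)$, making the left coset $GL_n(\ZZ_p) \tilde g$ open, and that $GL_n(\QQ)$ is dense in $GL_n(\QQ_p)$ (since $M_n(\QQ)$ is dense in $M_n(\QQ_p)$ and invertibility is an open condition). So one may choose $g \in GL_n(\QQ) \cap GL_n(\ZZ_p) \tilde g$, write $g = h \tilde g$ with $h \in GL_n(\ZZ_p)$, and conclude $gM = hM_0 \in GL_n(\ZZ_p) M_0 = Y_0$. The only real conceptual obstacle is recognizing that although the full stabilizer $\stab_{GL_n(\QQ_p)} M_0$ is not open in $GL_n(\QQ_p)$ (so naive density arguments on its cosets would fail), the $GL_n(\ZZ_p)$-orbit of $M_0$ already exhausts $Y_0$; this is what lets us replace the non-open stabilizer by the open subgroup $GL_n(\ZZ_p)$ and push the density argument through.
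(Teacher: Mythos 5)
Your argument is correct, and it takes a genuinely different route from the paper's. The paper proves completeness by an explicit computation: after permuting coordinates it writes $V=\col\bracks{I_k}{v}$ with $v$ over $\ZZ_p$, invokes the fact that the lattice $L$ has a rational basis to get $h\in GL_{n-k}(\QQ)$ with $hL=L_0$, picks a rational $j$ with $j+hv$ integral, and checks directly that $g=\left(\begin{smallmatrix} I_k & 0\\ j & h \end{smallmatrix}\right)$ sends $(V,L)$ to $(gV,L_0)$. You instead argue softly: transitivity of $GL_n(\QQ_p)$ on $\mathcal M(n,p,k)$, the identification $Y_0=GL_n(\ZZ_p)M_0$ (a mild strengthening of Proposition \ref{prop_invariant}, obtained from the factorization of any element of $GL_n(\ZZ_p)$ as an element of $SL_n(\ZZ_p)$ times $\diag(u,1,\ldots,1)$ with $u\in\ZZ_p^\times$, the diagonal factor fixing $M_0$), and then openness of $GL_n(\ZZ_p)$ in $GL_n(\QQ_p)$ together with density of $GL_n(\QQ)$ to replace the $p$-adic witness $\tilde g$ by a rational one in the coset $GL_n(\ZZ_p)\tilde g$; your remark that it is the openness of $GL_n(\ZZ_p)$, not of the (non-open) stabilizer of $M_0$, that drives the perturbation is exactly the right point. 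What each buys: your route avoids the explicit matrix and does not need the rationality of $p$-adic lattices (the Serre fact the paper uses to produce $h$), isolating the mechanism as ``an open subgroup whose orbit already fills $Y_0$''; the paper's route is elementary and constructive, and its corner/block computation is of the same kind as the manipulations reused later in Section 5 (e.g.\ in the proof of Theorem \ref{thm_Bpart1'}). Both ultimately rest on the density of $\QQ$ in $\QQ_p$. Two small points you leave implicit, both routine: a $\QQ_p$-basis of $V_M$ concatenated with a $\ZZ_p$-basis of $L$ really is a basis of $\QQ_p^n$ (directness of $V_M\oplus L$ plus $\rank M=n$), and the Borelness of $Y_0$ itself (clear, e.g., because $Y_0$ is a closed $SL_n(\ZZ_p)$-orbit by Proposition \ref{prop_invariant}) is likewise taken for granted by the paper.
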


Here, a Borel subset $A\subset X$ is said to be a \emph{complete Borel
  section} for the equivalence relation $E$ on $X$ iff $A$ meets every
$E$-class.  A countable Borel equivalence relation is bireducible with
its restriction to any complete Borel section.  Hence, using the
obvious bijection of $Y_0$ with $Gr_k\QQ_p^n$, we obtain that the
orbit equivalence relation induced by the action of $GL_n(\QQ)$ on
$\mathcal M(n,p,k)$ is bireducible with the following equivalence
relation on $Gr_k\QQ_p^n$:

\begin{defn}
  \label{defn_eoiso}
  For $V,V'\in Gr_k\QQ_p^n$, we define that $V\mathrel{E}_\oiso^k V'$
  iff there exists $g\in GL_n(\QQ)$ such that $(V,L_0)=g(V',L_0)$.
\end{defn}

\begin{proof}[Proof of Proposition \ref{prop_complete_section}]
  We must prove that for every $(V,L)\in\mathcal M(n,p,k)$, there
  exists $g\in GL_n(\QQ)$ and $V'$ such that $g(V,L)=(V',L_0)$.  (Of
  course, $V'$ will be $gV$.)  Permuting the standard basis if
  necessary, we may suppose that $V=\col\bracks{I_k}{v}$, where the
  entries of $v$ are in $\ZZ_p$.  Recall that $L$ has a rational basis
  over $\ZZ_p$, and so there exists $h\in GL_{n\ord-k}(\QQ)$ such that
  $hL=L_0$.  Now, choose a rational matrix $j$ so that the entries of
  $j+hv$ are in $\ZZ_p$, and let \(g=\sammatrix{I_k&0\\j&h}\).  Then
  both $V$ and $gV=\col\bracks{I_k}{j+hv}$ have the canonical
  complementary subspace $V_1=\QQ_pe_{k+1}\oplus\cdots\oplus\QQ_pe_n$.
  One now easily computes that $g(V,L)=(gV,hL)=(gV,L_0)$, as desired.
\end{proof}

\begin{proof}[Proof of Lemma \ref{lem_containments}]
  To see that $E_{SL_n\ZZ}^k\subset E_\oiso^k$, suppose that $g\in
  SL_n(\ZZ)$ and $gV=V'$.  Then by Proposition \ref{prop_invariant},
  we have $g(V,L_0)=(V',L_0)$ and so $V\mathrel{E}_{\oiso}^kV'$.  To
  see that $E_\oiso^k\subset E_{GL_n\QQ}^k$, notice that if $g\in
  GL_n(\QQ)$ and $g(V,L_0)=(V',L_0)$, then $gV=V'$.  We have already
  remarked that the bireducibility of $E_{\oiso}^k$ and
  $\oiso_{n,p}^k$ follows from Proposition
  \ref{prop_complete_section}.
\end{proof}

\section{The proofs of the main theorems}

\begin{thm}[Theorem B, part 1]
  \label{thm_Bpart1}
  If $n\geq3$ and $k\leq n-2$, then
  $\oiso_{n,p}^k\not\leq_B\oqiso_{n,p}^k$.
\end{thm}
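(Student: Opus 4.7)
The plan is to reduce the statement to a claim about Grassmann spaces. By Lemma~\ref{lem_containments}(b) together with Thomas's result \cite[Theorem 4.7]{torsionfree} that $\oqiso_{n,p}^k$ is Borel bireducible with $E_{GL_n\QQ}^k$, it suffices to show that $E_\oiso^k \not\leq_B E_{GL_n\QQ}^k$. Suppose for contradiction that $f\colon Gr_k\QQ_p^n \to Gr_k\QQ_p^n$ is a Borel reduction. By Lemma~\ref{lem_containments}(a), $f$ is automatically a Borel homomorphism from $E_{SL_n\ZZ}^k$ to $E_{GL_n\QQ}^k$.

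Next, I would apply the argument from the proof of Theorem~\ref{thm_qiso_ergodic} almost verbatim---using Lemma~\ref{lem_thomas_free}, Corollary~\ref{cor_rigidity}, and Lemma~\ref{lem_margulis2} to reduce to the setting in which $f$ is the map-part of a permutation-group homomorphism on an ergodic component, and then invoking Theorem~\ref{thm_affine}. Since here $l=k$, the theorem does not yield an immediate contradiction; instead, it concludes that $f(x)=hx$ almost everywhere for some $h \in GL_n(\QQ)$, or, in the knife-edge case $k=n/2$, that $f(x)=hx^\bot$ a.e. Combined with the reduction property of $f$ and the $GL_n(\QQ)$-invariance of $E_{GL_n\QQ}^k$ (in the $\bot$-case also using $(gV)^\bot = g^{-T}V^\bot$, so that complementation sends $GL_n(\QQ)$-orbits to $GL_n(\QQ)$-orbits), this forces $V \mathrel{E_\oiso^k} V' \iff V \mathrel{E_{GL_n\QQ}^k} V'$ on a conull set. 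Saturating under $E_{GL_n\QQ}^k$---which is permissible because $GL_n(\QQ)$ acts on $Gr_k\QQ_p^n$ by quasi-invariant Borel automorphisms---then yields a $GL_n(\QQ)$-invariant conull $X^{**}$ on which $E_\oiso^k$ and $E_{GL_n\QQ}^k$ agree as equivalence relations.

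The main obstacle is contradicting this agreement. I would show that for $k \leq n-2$, the set $\{V : [V]_{E_\oiso^k} \subsetneq [V]_{E_{GL_n\QQ}^k}\}$ is conull. First, for each non-scalar $g \in GL_n(\QQ)$ the fixed-point set $\{V : gV = V\}$ is a proper $p$-adic analytic subvariety of $Gr_k\QQ_p^n$, hence Haar-null; since $GL_n(\QQ) \setminus \QQ^* \cdot I$ is countable, it follows that almost every $V$ satisfies $\stab_{GL_n(\QQ)}(V) = \QQ^* \cdot I$. For such a $V$ and $V' = gV$, unwinding Definition~\ref{defn_eoiso} shows that $V \mathrel{E_\oiso^k} V'$ holds iff there is $\lambda \in \QQ^*$ with $\lambda g L_0[V] \equiv L_0[gV] \pmod{gV}$ as lattices in $\QQ_p^n/gV \cong \QQ_p^{n-k}$. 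Since $n-k \geq 2$, the $\QQ^*$-action on the lattices of $\QQ_p^{n-k}$ has multiple orbits (for example, $\ZZ_p^{n-k}$ and $\ZZ_p^{n-k-1} \oplus p\ZZ_p$ are not $\QQ^*$-proportional), and so a generic $g \in GL_n(\QQ)$ moves $L_0[V]$ to a lattice not $\QQ^*$-equivalent to $L_0[gV]$. This witnesses $V \mathrel{E_{GL_n\QQ}^k} gV$ without $V \mathrel{E_\oiso^k} gV$, contradicting the agreement on $X^{**}$.
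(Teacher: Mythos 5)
Your reduction to showing $E_\oiso^k\not\leq_BE_{GL_n\QQ}^k$, and the rigidity phase (Lemma \ref{lem_thomas_free}, Corollary \ref{cor_rigidity}, Lemma \ref{lem_margulis2}, Theorem \ref{thm_affine}) yielding $f(x)=hx$ a.e.\ with $h\in GL_n(\QQ)$, so that $E_\oiso^k$ and $E_{GL_n\QQ}^k$ agree on the ergodic component $X_0$, all match the paper. But there are two genuine problems after that. First, the saturation step is invalid: agreement of the two relations on $X_0$ (which, note, has positive measure but is \emph{not} conull) does not propagate to the $E_{GL_n\QQ}^k$-saturation of $X_0$. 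Quasi-invariance of the $GL_n(\QQ)$-action only tells you the saturated set is Borel and conull; if $x,y$ lie in the saturation but outside $X_0$, nothing relates their $E_\oiso^k$-classes to those of their translates inside $X_0$, since $E_\oiso^k\subsetneq E_{GL_n\QQ}^k$ is exactly what is at issue. Consequently your final step collapses: even granting that $[V]_{E_\oiso^k}\subsetneq[V]_{E_{GL_n\QQ}^k}$ for a.e.\ $V$, the witness $gV$ it produces need not lie in the set where the relations are known to agree, so no contradiction results. What is actually needed is a pair $x,gx$ \emph{both} inside the agreement set; the paper arranges this by first translating $X_0$ (by some $\gamma\in SL_n(\ZZ)$, which is harmless precisely because $E_{SL_n\ZZ}^k\subset E_\oiso^k$) onto the explicit principal congruence component $Z_0=(K_{p^t})V_0$ of equation \eqref{eqn_Z0}, and then choosing the specific $g=\diag(1,\ldots,1,p)$, which satisfies $gZ_0\subset Z_0$.

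Second, your "generic $g$" step is not proved. The observation that the $\QQ^*$-action on $\mathcal L(\QQ_p^{n-k})$ has several orbits does not by itself show that for a.e.\ $V$ some rational $g$ sends $L_0[V]$ to a lattice not $\QQ^*$-equivalent to $L_0[gV]$ modulo $gV$: the canonical complementary subspace varies with $V$ (it is \emph{not} a.e.\ equal to $\QQ_pe_{k+1}\oplus\cdots\oplus\QQ_pe_n$), so one must actually compute in the adjoining parametrization. Making this precise is exactly the paper's type invariant: on $Z_0$ the canonical complement is constant, $h$ must equal $ag$ by freeness, its lower corner $h^c$ has $\nu_p(\det h^c)\equiv1\pmod{n-k}$, and Proposition \ref{prop_type} (using $n-k\geq2$, i.e.\ $k\leq n-2$) shows $h^c$ cannot stabilize $L_0$. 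So your outline identifies the right ingredients (freeness, multiplicity of lattice classes), but it is missing both the device that keeps the witness pair inside the agreement set and the determinant-valuation computation that turns "multiple $\QQ^*$-orbits" into an actual obstruction.
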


By \cite[Theorem 4.3]{plocal}, the quasi-isomorphism relation
$\oqiso_{n,p}^k$ is bireducible with $E_{GL_n\QQ}^k$.  Using this
together with Lemma \ref{lem_containments}, we reduce Theorem
\ref{thm_Bpart1} to the following statement.

\begin{thm}
  \label{thm_Bpart1'}
  If $n\geq3$ and $k\leq n-2$, then $E_\oiso^k\not\leq_BE_{GL_n\QQ}^k$.
\end{thm}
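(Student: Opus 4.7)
The plan is to mimic the strategy of the proof of Theorem~\ref{thm_qiso_ergodic}. Suppose for contradiction that $f: Gr_k\QQ_p^n \to Gr_k\QQ_p^n$ is a Borel reduction from $E_\oiso^k$ to $E_{GL_n\QQ}^k$. By Lemma~\ref{lem_containments}(a), $E_{SL_n\ZZ}^k \subset E_\oiso^k$, so $f$ is in particular a countable-to-one Borel homomorphism from $E_{SL_n\ZZ}^k$ to $E_{GL_n\QQ}^k$. Running the rigidity machinery (Lemma~\ref{lem_thomas_free}, Corollary~\ref{cor_rigidity}, Lemmas~\ref{lem_margulis} and~\ref{lem_margulis2}) exactly as in the proof of Theorem~\ref{thm_qiso_ergodic}, we reach one of two alternatives: either $f$ sends a Haar-conull set into a single $GL_n(\QQ)$-orbit, or on some ergodic component $X_0$ for the action of a finite-index subgroup of $SL_n(\ZZ)$ on $Gr_k\QQ_p^n$, Theorem~\ref{thm_affine} (applied with $l = k$) yields $h \in GL_n(\QQ)$ with $f(V) = hV$, or (if $n = 2k$) $f(V) = hV^\bot$, for almost every $V \in X_0$. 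The first alternative is immediately ruled out: as $f$ is a reduction, such a conull set would be forced into a single (necessarily countable) $E_\oiso^k$-class, contradicting non-atomicity of the Haar measure.

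In the remaining case, for $V, V' \in X_0$ the reduction property together with the form of $f$ yields
\[V \mathrel{E}_\oiso^k V' \iff f(V) \mathrel{E}_{GL_n\QQ}^k f(V') \iff V \mathrel{E}_{GL_n\QQ}^k V',\]
where the second equivalence in the $hV^\bot$ sub-case uses that for $g \in GL_n(\QQ)$ with $gV = V'$, the transpose-inverse $(g^T)^{-1} \in GL_n(\QQ)$ sends $V^\bot$ to $V'^\bot$. Thus $E_\oiso^k$ and $E_{GL_n\QQ}^k$ coincide on a conull subset of $X_0$.

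The main obstacle --- and the heart of the proof --- is to refute this coincidence by exhibiting $V, V' \in X_0$ that are $E_{GL_n\QQ}^k$-equivalent but not $E_\oiso^k$-equivalent. Translating back via Lemma~\ref{lem_containments}(b), this amounts to showing that iso and quasi-iso genuinely differ on a positive-measure subset of $R(n,p,k)$, which is the measure-theoretic refinement of Theorem~4.4 of~\cite{plocal}. My plan is to choose $g \in GL_n(\QQ)$ with $|\det g|_p \neq 1$ and such that $gV_0 \neq V_0$ (for instance, a suitable product of $\mathrm{diag}(p, 1, \ldots, 1)$ with a basis-transposition), and argue that for generic $V$ the $\ZZ_p$-modules $(V, L_0)$ and $(gV, L_0)$ are not isomorphic. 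Any such isomorphism must have the form $h = gk$ with $k \in \stab_{GL_n(\QQ)} V$; for generic $V$ the stabilizer reduces to $\QQ^*I$, so $k$ acts on the $(n-k)$-dimensional quotient $\QQ_p^n/V$ as a single scalar $\lambda$, and the condition $h\ZZ_p^n \equiv \ZZ_p^n \pmod{gV}$ becomes an equation on $p$-adic volumes whose only solution requires $v_p(\lambda) = -1/(n-k)$, a non-integer for $n - k \geq 2$. This is precisely where the hypothesis $k \leq n - 2$ enters. To finally arrange that both points lie in $X_0$, strong approximation (surjectivity of $SL_n(\ZZ) \to SL_n(\ZZ_p/p^t\ZZ_p)$) produces $\gamma \in SL_n(\ZZ)$ with $V' := \gamma gV \in X_0$; since $\gamma$ preserves $\ZZ_p^n$, the iso class of $(gV, L_0)$ equals that of $(V', L_0)$, so $V \not\mathrel{E}_\oiso^k V'$ while $V \mathrel{E}_{GL_n\QQ}^k V'$ via $\gamma g$, delivering the contradiction.
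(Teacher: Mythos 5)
Your overall architecture is the same as the paper's: run the superrigidity machinery and Theorem~\ref{thm_affine} to replace $f$ by an affine map on an ergodic component $X_0$, observe that this forces $E_\oiso^k$ and $E_{GL_n\QQ}^k$ to coincide on a conull subset of $X_0$, and then refute the coincidence by a determinant-valuation obstruction modulo $n-k$ --- which is exactly the paper's lattice \emph{type} invariant (Proposition~\ref{prop_type}) and is indeed where $k\leq n-2$ enters. Your reformulation of the isomorphism condition as $h\ZZ_p^n\equiv\ZZ_p^n\pmod{gV}$ is legitimate, since property (a) in the definition of the canonical complementary subspace gives $(V,L_0)=V+\ZZ_p^n$ for every $V$, and your reduction to $h=\lambda g$ via triviality of the projective stabilizer is the same use of Lemma~\ref{lem_thomas_free} that the paper makes.

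The gap is in the endgame computation. With $h=\lambda g$, the volume equation reads $(n-k)\nu_p(\lambda)+d(V,g)=0$, where $d(V,g)$ is the valuation of the covolume shift of the image of $\ZZ_p^n$ in $\QQ_p^n/gV$; your conclusion $\nu_p(\lambda)=-1/(n-k)$ presupposes $d(V,g)=1$ (or at least $d\not\equiv 0 \bmod{\,n-k}$), and this is \emph{not} true for generic $V$ with your choice of $g$. The quantity $d(V,g)$ depends on how the $p$-scaling direction of $g$ sits relative to $V$ (equivalently, on the canonical complement of $V$): for instance with $g=\diag(p,1,\ldots,1)$ and $k=1$, on the positive-measure set of lines $V=\QQ_p(1,v_2,\ldots,v_n)$ with all $v_i\in p\ZZ_p$ one gets $d(V,g)=0$ and no contradiction. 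Moreover the computation has to be carried out on the particular ergodic component $X_0$ handed to you by Corollary~\ref{cor_rigidity} (intersected with the conull set where the relations coincide and the stabilizer is trivial), and your proposal never pins down where $X_0$ sits. The paper resolves precisely this point: using Proposition~\ref{prop_congruence} and the fact that $x\mathrel{E}_\oiso^k\gamma x$ for all $\gamma\in SL_n(\ZZ)$, it translates $X_0$ to the explicit principal congruence component $Z_0=(K_{p^t})V_0$, on which every point has canonical complement $V_1=\QQ_pe_{k+1}\oplus\cdots\oplus\QQ_pe_n$, and then takes $g=\diag(1,\ldots,1,p)$, which satisfies $gZ_0\subset Z_0$ (so no strong-approximation correction is needed) and acts on $V_1$ with determinant of valuation exactly $1$, so that Proposition~\ref{prop_type} applies uniformly. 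Your $\gamma$-correction via strong approximation is a workable substitute for $gZ_0\subset Z_0$, but it does not remove the need to verify $d(V,g)\not\equiv 0\pmod{n-k}$ on a positive-measure subset of $X_0$; some normalization of $X_0$ of this kind (or an equally explicit analysis of the pivot structure on $X_0$) is required before your final step goes through.
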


It is worth remarking that this result gives new examples of countable
Borel equivalence relations $E\subset F$ such that $E\not\leq_BF$ (see
\cite{adams}).  Before the proof, we introduce a key invariant on the
space $\mathcal L(\QQ_p^l)$ of lattices of $\QQ_p^l$.

\begin{defn}
  For a lattice $L\in\mathcal L(\QQ_p^l)$, let $A$ be any $l\times l$
  matrix over $\QQ_p$ whose columns form a $\ZZ_p$-basis for $L$.  The
  \emph{type} of $L$, denoted $\type(L)$, is the reduction modulo $l$
  of $\nu_p(\det A)$.  (Here, $\nu_p$ denotes the $p$-adic valuation
  on $\QQ_p^*$.)
\end{defn}

It is easily checked that the type is independent of the choice of the
matrix $A$.  Moreover:

\begin{prop}
  \label{prop_type}
  If $s\in GL_l(\QQ_p)$ then $\type(sL)\equiv\nu_p(\det s)+\type(L)$,
  modulo $l$.
\end{prop}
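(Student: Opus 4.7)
The plan is to unwind the definition directly: pick a matrix $A$ whose columns form a $\ZZ_p$-basis for $L$, so that $\type(L)$ is the class of $\nu_p(\det A)$ modulo $l$. Since $s \in GL_l(\QQ_p)$ is a $\QQ_p$-linear automorphism, the columns of $sA$ form a $\ZZ_p$-basis for $sL$; thus $\type(sL)$ may be computed from $sA$.

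Then I would invoke multiplicativity of the determinant, $\det(sA) = \det(s)\det(A)$, together with additivity of $\nu_p$ on $\QQ_p^*$, to obtain
\[
\nu_p(\det(sA)) = \nu_p(\det s) + \nu_p(\det A).
\]
Reducing both sides modulo $l$ yields the claimed congruence $\type(sL) \equiv \nu_p(\det s) + \type(L) \pmod{l}$.

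The only thing that deserves a sentence of justification is the well-definedness remark preceding the proposition, namely that $\type$ is independent of the choice of basis-matrix $A$: two such matrices differ by right multiplication by an element of $GL_l(\ZZ_p)$, whose determinant has $p$-adic valuation zero, so $\nu_p(\det A)$ is well defined as an integer (and $a~fortiori$ modulo $l$). Given this, the computation above is immediate, so there is essentially no obstacle; the argument is a one-line determinant calculation.
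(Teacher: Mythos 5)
Your argument is correct and is precisely the (omitted, routine) computation the paper has in mind: the columns of $sA$ are a $\ZZ_p$-basis for $sL$, so $\type(sL)\equiv\nu_p(\det(sA))=\nu_p(\det s)+\nu_p(\det A)\equiv\nu_p(\det s)+\type(L)\pmod{l}$. Your well-definedness remark (basis matrices differ by right multiplication by an element of $GL_l(\ZZ_p)$, whose determinant is a unit) is also exactly what the paper's "easily checked" refers to.
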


In particular, the type of a lattice $L$ depends only on its
\emph{class} $\Lambda=\{aL:a\in\QQ_p\}$.

\begin{rem}
  In the case when $l=2$, there is a natural graph structure on the
  set of lattice classes.  Join $\Lambda$ and $\Lambda'$ by an edge
  iff there are $L\in \Lambda$ and $L'\in\Lambda'$ such that $L'$ is a
  maximal proper sublattice of $L$ (or vice versa).  The resulting
  graph is the ($p+1$)-regular tree and the types correspond to the
  colors in a $2$-coloring of the tree.  See the cover of the most
  recent printing of \cite{serre} for a picture in the case that
  $l=p=2$.
\end{rem}

\begin{proof}[Proof of Theorem \ref{thm_Bpart1'}]
  Suppose that $f:Gr_k\QQ_p^n\rightarrow Gr_k\QQ_p^n$ is a Borel
  reduction from $E_\oiso^k$ to $E_{GL_n\QQ}^k$.  Then clearly $f$ is
  a weak Borel reduction from $E_{SL_n\ZZ}^k$ to $E_{GL_n\QQ}^k$.
  Repeating arguments from the proof of Theorem
  \ref{thm_qiso_ergodic}, we may suppose there is an ergodic component
  $\Gamma_0\actson X_0$ for $SL_n(\ZZ)\actson Gr_k\QQ_p^n$ and a
  homomorphism $\phi:\Gamma_0\rightarrow GL_n(\QQ)$ such that
  $(\phi,f):\Gamma_0\actson X_0\longrightarrow GL_n(\QQ)\actson
  Gr_k\QQ_p^n$ is a homomorphism of permutation groups.

  By Lemma \ref{lem_margulis2}, we may replace $\Gamma_0\actson X_0$
  with an ergodic subcomponent to suppose that $\phi(\Gamma_0)\subset
  SL_n(\ZZ)$ is a subgroup of finite index.  Shortly, we shall argue
  that we can suppose that $f(X_0)$ is an ergodic component for the
  action of $\phi(\Gamma_0)$.  However, since our argument is
  sensitive to timing, we must first perform a simplification which
  will make the computations at the end of the proof easier.
  
  By Proposition \ref{prop_congruence}, we may replace $X_0$ with an
  ergodic subcomponent to suppose that $\Gamma_0\actson X_0$ is a
  principle congruence component.  Recall that this means $\Gamma_0$
  is some principle congruence subgroup $\Gamma_{p^t}$ and that $X_0$
  is equal, modulo a null set, to a $K_{p^t}$-orbit.

  \begin{claim}
    We may suppose that the domain $X_0$ of $f$ is equal, modulo a
    null set, to the \emph{particular} ergodic component
    $Z_0=(K_{p^t})V_0$.
  \end{claim}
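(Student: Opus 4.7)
The plan is to exploit the fact that $K_{p^t}$ is normal in $SL_n(\ZZ_p)$, so that $SL_n(\ZZ_p)$ acts transitively on the set of $K_{p^t}$-orbits (since it acts transitively on $Gr_k\QQ_p^n$). First I would pick some $g\in SL_n(\ZZ_p)$ with $gZ_0=X_0$ (mod null). Since $K_{p^t}$ is open in $SL_n(\ZZ_p)$ and $SL_n(\ZZ)$ is dense in $SL_n(\ZZ_p)$, the open coset $gK_{p^t}$ meets $SL_n(\ZZ)$; choose $\sigma\in SL_n(\ZZ)\cap gK_{p^t}$.  Then $\sigma Z_0=gZ_0=X_0$ modulo a null set.

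Next I would transport the data $(\phi,f)$ from $X_0$ back to $Z_0$ via $\sigma$.  Define $\tilde f:Z_0\to Gr_k\QQ_p^n$ by $\tilde f(y)=f(\sigma y)$ and $\tilde\phi:\Gamma_0\to GL_n(\QQ)$ by $\tilde\phi(\gamma)=\phi(\sigma\gamma\sigma^{-1})$.  Because $\Gamma_0=\Gamma_{p^t}$ is normal in $SL_n(\ZZ)$, conjugation by $\sigma$ sends $\Gamma_0$ to itself, so $\tilde\phi$ is well-defined and has the same image $\phi(\Gamma_0)\leq SL_n(\ZZ)$, which is still of finite index.  A short computation shows that $(\tilde\phi,\tilde f)$ is again a permutation group homomorphism $\Gamma_0\actson Z_0\longrightarrow GL_n(\QQ)\actson Gr_k\QQ_p^n$: for $\gamma\in\Gamma_0$ and $y\in Z_0$,
\[\tilde f(\gamma y)=f(\sigma\gamma y)=f((\sigma\gamma\sigma^{-1})\sigma y)=\phi(\sigma\gamma\sigma^{-1})f(\sigma y)=\tilde\phi(\gamma)\tilde f(y).\]

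Finally I would check that $\tilde f$ inherits from $f$ all the properties used in the remainder of the argument.  Since $\sigma\in SL_n(\ZZ)$, the map $y\mapsto\sigma y$ preserves $E_{SL_n\ZZ}^k$, and hence also $E_\oiso^k$ (by Lemma \ref{lem_containments}(a) together with the $GL_n(\QQ)$-invariance of $E_\oiso^k$ on $\mathcal M(n,p,k)$); therefore $\tilde f$ is still a Borel reduction from $E_\oiso^k\!\restriction\! Z_0$ to $E_{GL_n\QQ}^k$.  Renaming $(\tilde\phi,\tilde f,Z_0)$ as $(\phi,f,X_0)$ yields the claim.

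The only subtlety is to make sure $\sigma$ can be chosen in $SL_n(\ZZ)$ rather than just in $SL_n(\ZZ_p)$; this is what keeps the conjugated homomorphism $\tilde\phi$ inside $GL_n(\QQ)$ and maintains the finite-index subgroup $\phi(\Gamma_0)\leq SL_n(\ZZ)$ secured earlier via Lemma \ref{lem_margulis2}.  The density-and-openness observation above handles exactly this point, so the reduction to the concrete component $Z_0=(K_{p^t})V_0$ costs nothing.
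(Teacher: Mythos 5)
Your proposal is correct and follows essentially the same route as the paper: choose $\sigma\in SL_n(\ZZ)$ carrying $Z_0$ onto $X_0$ modulo a null set, replace $f$ by $f(\sigma\,\cdot)$ and $\phi$ by $\phi(\sigma\,\cdot\,\sigma^{-1})$, and use $E_{SL_n\ZZ}^k\subset E_\oiso^k$ to see the reduction property is preserved. The only difference is that you rederive the transitivity of $SL_n(\ZZ)$ on the $K_{p^t}$-orbits from density and openness, whereas the paper simply invokes this fact from its earlier discussion of ergodic components of homogeneous spaces.
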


  Recall that the ergodic component $Z_0$ was described in equation
  \eqref{eqn_Z0}.
  
  \begin{claimproof}
    Recall that $SL_n(\ZZ)$ acts transitively on the $K_{p^t}$-orbits,
    so there exists $\gamma\in SL_n(\ZZ)$ such that $\gamma Z_0=X_0$,
    modulo a null set.  Consider the map $f'(x):=f(\gamma x)$.  By
    Lemma \ref{lem_containments}, we always have
    $x\mathrel{E}_\oiso^k\gamma x$, and so $f'$ a Borel reduction from
    $E_\oiso^k$ to $E_{GL_n\QQ}^k$.  Moreover, it is easily checked
    that $(\phi',f')$ is a homomorphism of permutation groups, where
    $\phi'(g)=\phi(\gamma g\gamma^{-1})$.  Replacing $(\phi,f)$ with
    $(\phi',f')$ establishes the claim.
  \end{claimproof}
  
  \begin{claim}
    We may suppose that $f(x)=x$ for all $x\in X_0$.
  \end{claim}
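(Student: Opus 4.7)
The plan is to invoke Theorem~\ref{thm_affine} on the permutation group homomorphism $(\phi,f)$ and then pre-compose $f$ with an appropriate element of $GL_n(\QQ)$ to force $f$ to be the identity on $X_0$. To apply \ref{thm_affine}, I first arrange that $f(X_0)\subset X_1$ for a single ergodic component $X_1$ of the action of $\phi(\Gamma_0)$ on $Gr_k\QQ_p^n$: since $\phi(\Gamma_0)$ has finite index in $SL_n(\ZZ)$, there are only finitely many such components, and the $f$-preimages partition $X_0$ into $\Gamma_0$-invariant sets; by ergodicity of $\Gamma_0\actson X_0$ exactly one preimage is conull, and I delete the null complement.

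Theorem~\ref{thm_affine} now applies with $l=k$ and yields some $h\in GL_n(\QQ)$ satisfying either $f(x)=hx$ a.e.\ (case~(a)), or $k=n/2$ and $f(x)=hx^\bot$ a.e.\ (case~(b)). In case~(a) I set $\tilde f(x):=h^{-1}f(x)$ and $\tilde\phi(\gamma):=h^{-1}\phi(\gamma)h$. A direct check shows that $\tilde f$ is still a Borel reduction from $E_\oiso^k$ to $E_{GL_n\QQ}^k$ (the target relation being $GL_n(\QQ)$-invariant), that $(\tilde\phi,\tilde f)$ remains a permutation group homomorphism, and that $\tilde f(x)=x$ on the conull subset of $X_0$ where $f(x)=hx$. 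In case~(b) I instead set $\tilde f(x):=(h^{-1}f(x))^\bot$: since $k=n/2$, the orthogonal complement is a Borel self-bijection of $Gr_k\QQ_p^n$ sending $E_{GL_n\QQ}^k$-classes to $E_{GL_n\QQ}^k$-classes (via $g\mapsto g^{-T}$), and an analogous computation with the new homomorphism $\tilde\phi(\gamma):=(h^{-1}\phi(\gamma)h)^{-T}$ again gives $\tilde f(x)=x$ a.e. After discarding the null set on which $\tilde f(x)\neq x$, and noting that $X_0$ is only shrunk by a null subset of the particular ergodic component $Z_0$ established in the previous claim, the claim is proved.

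The only subtle point—not really an obstacle—is that after the substitution, $\tilde\phi(\Gamma_0)$ need not lie inside $SL_n(\ZZ)$; however, since $\tilde\phi(\Gamma_0)\subset GL_n(\QQ)$ is the only property used in the concluding step of the main proof (which will derive a contradiction from the identity $\gamma x=\tilde\phi(\gamma)x$ on $X_0$ via the $\type$ invariant on the lattice coordinate of $(V,L_0)$), this causes no difficulty.
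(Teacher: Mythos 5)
Your proof is correct and follows essentially the same route as the paper: pass to a single ergodic component of $\phi(\Gamma_0)$ by ergodicity, apply Theorem~\ref{thm_affine}, and then use the fact that composing with $h^{-1}\in GL_n(\QQ)$ (and, when $k=n/2$, with the orthogonal complement) preserves being a reduction to $E_{GL_n\QQ}^k$, so that $f$ may be replaced by the identity on a conull subset of $Z_0$. Your explicit treatment of the $k=n/2$ case and the conjugated $\tilde\phi$ are harmless refinements of what the paper states tersely, and your final caveat is indeed immaterial for an even simpler reason: the homomorphism $\phi$ is never used again in the concluding type-invariant computation, which relies only on the identity map being a reduction on $X_0=Z_0$ modulo null.
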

  
  \begin{claimproof}
    Observe that since the $\gamma$ from the last argument satisfies
    $\gamma\in SL_n(\ZZ)$, we have retained that
    $\phi(\Gamma_0)\subset SL_n(\ZZ)$.  Now, by the ergodicity of
    $\Gamma_0\actson X_0$, we may delete a null subset of $X_0$ to
    suppose that $f(X_0)$ is contained in an ergodic component for the
    action of $\phi(\Gamma_0)$.  By Theorem \ref{thm_affine}, we may
    suppose that there exists $h\in GL_n(\QQ)$ such that $f(x)=hx$ for
    all $x\in X_0$.  But now, since $h\in GL_n(\QQ)$, it follows that
    the identity map on $X_0$ is \emph{also} a Borel reduction from
    $E_\oiso^k$ to $E_{GL_n\QQ}^k$.
  \end{claimproof}
  
  Before proceeding to the final contradiction, we give a brief
  outline.  If indeed the identity function on $X_0$ is a Borel
  reduction from $E_\oiso^k$ to $E_{GL_n\QQ}^k$, then whenever
  $x,gx\in X_0$ and $g\in GL_n(\QQ)$, we will have
  $x\mathrel{E}_\oiso^kgx$.  If we additionally suppose that $x,gx\in
  Z_0$, then as we have observed, $x$ and $gx$ each have the canonical
  complementary subspace $V_1=\QQ_pe_{k+1}\oplus\cdots\oplus\QQ_pe_n$.
  We shall choose the matrix $g$ so that it acts ``nontrivially'' on
  $V_1$ and this will contradict that $x\mathrel{E}_\oiso^kgx$.
  
  Turning to the details, let $g=\diag(1,\ldots,1,p)$, where
  $\diag(d_1,\ldots,d_n)$ denotes the diagonal matrix with
  $a_{ii}=d_i$.  Using equation \eqref{eqn_Z0} one easily checks that
  $gZ_0\subset Z_0$, and since $X_0=Z_0$ modulo a null set, we have
  that $gX_0$ is almost contained in $X_0$.  Together with Lemma
  \ref{lem_thomas_free}, this implies that we can choose $x\in
  Fr(PGL_n(\QQ)\actson Gr_k\QQ_p^n)$ in such a way that $x,gx\in
  X_0\cap Z_0$.  Then $x(E_\oiso^k)gx$, and so Definition
  \ref{defn_eoiso} gives $h\in GL_n(\QQ)$ such that
  \begin{equation}
    \label{eqn_h1}
    h(x,L_0)=(gx,L_0).
  \end{equation}
  Now, $hx=gx$ and since we have chosen $x$ so that it is not fixed by
  any element of $PGL_n(\QQ)\backslash\{1\}$, there exists
  $a\in\QQ_p^*$ such that $h=ag$.  Since $x,gx\in Z_0$, each has the
  canonical complementary subspace
  $V_1=\QQ_pe_{k+1}\oplus\cdots\oplus\QQ_pe_n$.  Since $h$ is
  diagonal, it clearly acts on $V_1$ via its $(n-k)\times(n-k)$ lower
  right-hand corner $h^c$.  It follows that $h(x,L_0)=(hx,h^cL_0)$,
  which (together with \eqref{eqn_h1}) implies that $h^c$ stabilizes
  $L_0$.  But it is readily seen that $\nu_p(\det
  h^c)\equiv1\mod(n-k)$, so Proposition \ref{prop_type} implies that
  $h^c$ does \emph{not} stabilize $L_0$, a contradiction.
\end{proof}

We next attack the hardest case of Theorem A.

\begin{thm}[Theorem A, case 2]
  \label{thm_Acase2}
  Suppose that $n\geq4$ and $2\leq k<n/2$.  Then $\oiso_{n,p}^k$ is
  Borel incomparable with $\oiso_{n,p}^{n\ord-k}$.
\end{thm}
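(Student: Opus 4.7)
The plan is to adapt the strategy of Theorem \ref{thm_Bpart1'} to handle the orthogonal-complement map delivered by Theorem \ref{thm_affine}(b). By Lemma \ref{lem_containments}, it is enough to show that $E_\oiso^k$ and $E_\oiso^{n-k}$ are Borel incomparable, and the two directions will follow from symmetric arguments (with the roles of $k$ and $n-k$ interchanged); I focus on showing $E_\oiso^k\not\leq_B E_\oiso^{n-k}$.

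Suppose toward a contradiction that $f:Gr_k\QQ_p^n\to Gr_{n-k}\QQ_p^n$ is a Borel reduction from $E_\oiso^k$ to $E_\oiso^{n-k}$. By Lemma \ref{lem_containments}, $f$ is also a weak Borel reduction from $E_{SL_n\ZZ}^k$ to $E_{GL_n\QQ}^{n-k}$. Applying Lemma \ref{lem_thomas_free}, Corollary \ref{cor_rigidity}, Lemma \ref{lem_margulis2}, and Proposition \ref{prop_congruence} (repeating the reductions in the proofs of Theorems \ref{thm_qiso_ergodic} and \ref{thm_Bpart1'}), I may assume: $f$ is defined on a principle congruence component $Z_0=(K_{p^t})V_0$, there is a homomorphism $\phi:\Gamma_{p^t}\to SL_n(\ZZ)$ with finite-index image, and $(\phi,f)$ is a permutation group homomorphism. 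Since the hypothesis $k<n/2$ forces $l=n-k\neq k$, only case (b) of Theorem \ref{thm_affine} can apply, yielding $h\in GL_n(\QQ)$ with $f(x)=hx^\bot$ for almost every $x\in Z_0$.

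For the contradiction, take $g:=\diag(1,\ldots,1,p)\in GL_n(\QQ)$ exactly as in Theorem \ref{thm_Bpart1'}; then $gZ_0\subset Z_0$ by \eqref{eqn_Z0}. Using Lemma \ref{lem_thomas_free}, choose a generic $V\in Z_0$ satisfying $gV\in Z_0$, $V$ in the free part of $PGL_n(\QQ)\actson Gr_k\QQ_p^n$, and $hV^\bot$ in the free part of $PGL_n(\QQ)\actson Gr_{n-k}\QQ_p^n$. The Theorem \ref{thm_Bpart1'} argument, transferred verbatim to the domain side, shows that $V$ is not $E_\oiso^k$-related to $gV$: freeness forces any witnessing $g'\in GL_n(\QQ)$ to have the form $g'=ag$ for some $a\in\QQ^*$, and then $\nu_p(\det(g'|_{V_1}))\equiv 1\pmod{n-k}$ rules out preservation of the rank-$(n-k)$ lattice $L_0^{(n-k)}[V]\subset V_1$. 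The contradiction will therefore follow once it is shown that on the target side, $hV^\bot$ \emph{is} $E_\oiso^{n-k}$-related to $h(gV)^\bot=hg^{-T}V^\bot$; the natural candidate witness is $h':=hg^{-T}h^{-1}\in GL_n(\QQ)$, which evidently sends $hV^\bot$ to $hg^{-T}V^\bot$.

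The main obstacle is verifying that $h'$ (possibly rescaled by some $a\in\QQ^*$) preserves the rank-$k$ lattice $L_0^{(k)}[hV^\bot]$. In the baseline case $h=I$, the verification is immediate: both $V^\bot$ and $g^{-1}V^\bot$ have canonical complement $V_0=\langle e_1,\ldots,e_k\rangle$ for generic $V\in Z_0$, and $g^{-T}=\diag(1,\ldots,1,1/p)$ acts trivially on $V_0$ and thus on $\ZZ_pe_1+\cdots+\ZZ_pe_k$. For general $h$, I would exploit the constraint $h^2\in\QQ^* I$ (which follows from the $(\phi,f)$-equivariance, since the relation $\phi(\gamma)\cdot f(x)=f(\gamma x)$ combined with the Zariski density of $\Gamma_{p^t}$ in $SL_n(\QQ_p)$ forces $h^2$ into the center of $GL_n(\QQ_p)$), reducing $h$ modulo scalars to an involution of $PGL_n(\QQ)$ of one of a finite number of normal forms, and verifying the lattice condition in each. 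The essential asymmetry underlying the proof is that the $p$-scaling in $g$ affects the coordinate $e_n\in V_1$, which on the domain lies in the canonical complement of $V$ (where the lattice $L_0^{(n-k)}[V]$ resides, producing the mod-$(n-k)$ obstruction), but on the target lies inside the divisible subspace $hV^\bot$ rather than inside its canonical complement (where $L_0^{(k)}[hV^\bot]$ resides, so no analogous obstruction arises).
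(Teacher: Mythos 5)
Your overall architecture is reasonable and the first half matches the paper: the chain Lemma \ref{lem_thomas_free} / Corollary \ref{cor_rigidity} / Lemma \ref{lem_margulis2} / Proposition \ref{prop_congruence} / Theorem \ref{thm_affine}(b), and the domain-side computation showing $V$ is not $E_\oiso^k$-related to $gV$ for $g=\diag(1,\ldots,1,p)$, are all sound (and your ``baseline $h=I$'' target computation is essentially the paper's computation, mirrored). The gap is in the step you yourself flag as the main obstacle. The claim that equivariance plus Zariski density forces $h^2\in\QQ^*I$ is false: for \emph{every} $h\in GL_n(\QQ)$ the pair $\phi=\chi_{h^{-1}}\circ(-T)$, $f(x)=hx^\bot$ satisfies $f(\gamma x)=\phi(\gamma)f(x)$ identically, so the equivariance relation imposes no condition on $h$; and the earlier normalization $\im(\phi)\subset SL_n(\ZZ)$ imposes none either, since conjugation by any rational matrix carries $SL_n(\ZZ)$ to a commensurable group (one only shrinks $\Gamma_0$). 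So $h$ is essentially an arbitrary element of $GL_n(\QQ)$, and the proposed reduction to a finite list of involutive normal forms collapses. Nor can you normalize $h$ away by replacing $f$ with $h^{-1}f$: unlike in Theorem \ref{thm_Bpart1'}, where the target relation $E_{GL_n\QQ}^k$ is a $GL_n(\QQ)$-orbit relation, the relation $E_\oiso^{n-k}$ is \emph{not} induced by a group action on $Gr_{n-k}\QQ_p^n$ (see the Remark after Proposition \ref{prop_invariant}), so $h^{-1}f$ need not be a reduction to $E_\oiso^{n-k}$. Consequently your key assertion --- that $f(V)=hV^\bot$ and $f(gV)=hg^{-T}V^\bot$ are $E_\oiso^{n-k}$-related via $hg^{-T}h^{-1}$ --- is only verified for scalar $h$: for general $h$ the canonical complement of $hV^\bot$ is not $V_0$, the lattice $L_0[hV^\bot]$ is moved in an uncontrolled way, and the endgame does not close.

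What the paper does at exactly this point is the ingredient your proposal is missing. It replaces the target by the module space $\mathcal M^*(n,p,n-k)$, on which the relation $(\oiso_{n,p}^{n-k})^*$ \emph{is} the $GL_n(\QQ)$-orbit relation, so that after Theorem \ref{thm_affine} one may harmlessly replace $f$ by $hf$ and assume $\bar f(x)=x^\bot$; the price is that the lattice component of $f(x)$ is unknown. That unknown is then controlled by the $\type$ invariant: the map $x\mapsto(SL_n\ZZ_p)f(x)$ is $\Gamma_0$-invariant (because $\im(\phi)\subset SL_n(\ZZ)$), so by ergodicity $f(X_0)$ lies in a single $SL_n(\ZZ_p)$-orbit and, by Proposition \ref{prop_type_preserve}, $\type(f(x))$ is almost everywhere constant. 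The contradiction is then obtained with $g=\diag(1/p,1,\ldots,1)$, which acts trivially on $V_1$ so that $x$ and $gx$ \emph{are} $E_\oiso^k$-related (only the homomorphism direction is used, which is why the paper even rules out weak reductions); freeness forces the witness to be $ag^{-T}$, whose upper-left corner alters the type mod $k$, contradicting type-constancy. If you want to salvage your version, you need some substitute for this type-constancy/ergodicity step (or an argument pinning down the lattice part of $f$), since the algebraic constraint on $h$ you invoke does not exist.
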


This is a consequence of the following slightly stronger result.

\begin{thm}
  \label{thm_Acase2'}
  Suppose that $n\geq3$, and let $2\leq k\leq n-1$ and $k\neq n/2$.
  Then $E_\oiso^k\not\leq_B^w\oiso_{n,p}^{n\ord-k}$.
\end{thm}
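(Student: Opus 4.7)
The plan is to imitate the proof of Theorem~\ref{thm_Bpart1'}, with its closing type-$(n-k)$ lattice argument replaced by a type-$k$ obstruction adapted to case~(b) of Theorem~\ref{thm_affine}. First, suppose for contradiction that $f : Gr_k\QQ_p^n \to Gr_{n-k}\QQ_p^n$ is a weak Borel reduction from $E_\oiso^k$ to $E_\oiso^{n-k}$ (using the bireducibility supplied by Lemma~\ref{lem_containments}). Since $E_\oiso^{n-k}\subset E_{GL_n\QQ}^{n-k}$, the map $f$ is in particular a Borel homomorphism from $E_{SL_n\ZZ}^k$ to $E_{GL_n\QQ}^{n-k}$, so I would run the opening reductions of the proof of Theorem~\ref{thm_Bpart1'} verbatim: apply Corollary~\ref{cor_rigidity}, Lemma~\ref{lem_margulis2}, and Proposition~\ref{prop_congruence} to reduce to an ergodic principal congruence component $X_0 = Z_0 = (K_{p^t})V_0$ with $\Gamma_0 = \Gamma_{p^t}$, together with a permutation group homomorphism $(\phi,f')$ satisfying $\phi(\Gamma_0)\leq SL_n(\ZZ)$ of finite index and $f(x)\mathrel{E}_{GL_n\QQ}^{n-k}f'(x)$ for $x\in X_0$. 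The point of departure from \ref{thm_Bpart1'} is that here Theorem~\ref{thm_affine} lands in case~(b) (since $l = n-k \neq k$, using $k \neq n/2$), giving $f'(x) = hx^\bot$ for some $h\in GL_n(\QQ)$ with $\phi(\gamma) = h\gamma^{-T}h^{-1}$ modulo the center.

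Now I would choose $g = \mathrm{diag}(p,1,\ldots,1)\in GL_n(\QQ)$ and restrict to the positive-measure subset $X_0'\subset X_0$ on which $gx\in X_0$ (those $x = \col\bracks{I_k}{v}$ whose first column of $v$ lies in $p^{t+1}\ZZ_p$). Because $g$ acts as the identity on the canonical complement $V_1 = \QQ_p e_{k+1}\oplus\cdots\oplus\QQ_p e_n$, one has $g(x,L_0)=(gx,L_0)$, whence $x\mathrel{E}_\oiso^k gx$ is witnessed by $g$ itself. The weak homomorphism property therefore forces $f(x)\mathrel{E}_\oiso^{n-k}f(gx)$ almost everywhere on $X_0'$. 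Writing $f(x) = g_x\cdot hx^\bot$ for a Borel cocycle $g_x\in GL_n(\QQ)$ (well-defined modulo the center by Lemma~\ref{lem_thomas_free}), and using freeness of $PGL_n(\QQ)\actson Gr_{n-k}\QQ_p^n$, any implementer $g^* \in GL_n(\QQ)$ of $f(x)\mathrel{E}_\oiso^{n-k}f(gx)$ must take the form $g^* = c\, g_{gx}h\, g^{-T}h^{-1}g_x^{-1}$ for some $c\in\QQ^*$. A direct determinant computation in bases compatible with the compositions on each side (the crucial $-1$ coming from $\nu_p(\det g^{-T}) = -1$) yields
\[
\nu_p(\det \bar g^*) = k\nu_p(c) + (m(gx)-m(x)) - 1,
\]
where $\bar g^*:\QQ_p^n/f(x)\to \QQ_p^n/f(gx)$ is the induced quotient map and $m(y) := \nu_p(\det g_y)\bmod n$. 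The lattice condition $\bar g^*(\ZZ_p^k) = \ZZ_p^k$ therefore entails, at minimum, the necessary congruence $m(gx)-m(x)\equiv 1\pmod k$.

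To close the argument I would apply Ioana's theorem (Theorem~\ref{thm_ioana}) a second time, now to the integer-valued cocycle $\nu_p\circ\det\alpha : \Gamma_0\times X_0\to\ZZ$ associated with $f$: since $\ZZ$ is a countable abelian group and $\Gamma_0$ is Kazhdan (for $n\geq 3$) with finite abelianization, any homomorphism $\Gamma_0\to\ZZ$ vanishes, so this cocycle is cohomologous to zero, i.e., $\nu_p(\det\alpha(\gamma,x)) = c(\gamma x)-c(x)$ for some Borel $c:X_0\to\ZZ$ (concretely $c(x) = \nu_p(\det g_x)$). Subtracting $c$ from $m$ produces a $\Gamma_0$-invariant $\ZZ/n\ZZ$-valued Borel function, which by ergodicity of $\Gamma_0\actson X_0$ is constant almost everywhere. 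Feeding this back, the congruence reduces to $c(gx)-c(x)\equiv 1\pmod k$ almost everywhere on $X_0'$; a further dynamical argument, analyzing the partial $g$-action $g:X_0'\to X_0$ together with the $\Gamma_0$-action, shows that this cannot hold, yielding $0\equiv 1\pmod k$ and contradicting $k\geq 2$.

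The main obstacle is precisely this last dynamical step: the coboundary $c$ produced by Ioana's theorem is only $\Gamma_0$-equivariant, so showing that $c(gx)-c(x)\not\equiv 1\pmod k$ almost everywhere requires either invoking cocycle rigidity for the joint $\langle\Gamma_0,g\rangle$-action, or directly exploiting the non-conservative character of $g:X_0'\to X_0$ --- noting that $g^{-1}:X_0\to X_0'$ is a full self-map --- together with ergodicity to rule out the mod-$k$ shift. A secondary technical point is ensuring that the formula for $\nu_p(\det\bar g^*)$ is genuinely well-defined despite the $x$-dependent canonical complement of $f(x)$; this requires, on a Borel subset of full measure, a coherent choice of natural bases in the block decomposition of $g^*$.
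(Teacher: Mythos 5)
There is a genuine gap, and it sits exactly where you flag it yourself: the final ``dynamical step.'' Your plan projects the reduction down to the Grassmannian and works with $E_\oiso^{n-k}$ on $Gr_{n-k}\QQ_p^n$, so every point carries the \emph{same} adjoined lattice $L_0$; the valuation obstruction then has to be reconstructed from an implementing-element cocycle $g_x$ with $f(x)=g_xhx^\bot$, and the contradiction hinges on showing that the quantity $m(x)=\nu_p(\det g_x)$ (suitably corrected by a transfer function) cannot shift by $1$ modulo $k$ under the partial transformation $g$. Nothing in the $\Gamma_0$-cocycle machinery controls increments along $g$, since $g\notin\Gamma_0$ and $g$ does not even preserve the measure on $X_0$; Ioana's theorem and Kazhdanness of $\Gamma_0$ give you control of $\nu_p\circ\det$ along $\Gamma_0$-orbits only, and your proposed fixes (rigidity for the joint $\langle\Gamma_0,g\rangle$-action, or a ``non-conservativity'' argument) are not carried out. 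There are also two secondary problems: $g_x$ is only determined modulo scalars, so $\nu_p(\det g_x)$ is only defined modulo $n$, and reducing a mod-$n$ ambiguous quantity to a congruence mod $k$ is not well defined unless $k\mid n$; and after replacing $f$ by $f'=bf$ via Corollary \ref{cor_rigidity} you only retain a homomorphism into $E_{GL_n\QQ}^{n-k}$, so reconnecting with the finer relation $E_\oiso^{n-k}$ on the adjusted map needs an argument you don't supply.

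The paper avoids all of this by \emph{not} projecting to the Grassmannian: it keeps the reduction valued in the module space $\mathcal M^*(n,p,n-k)$, where $(\oiso_{n,p}^{n-k})^*$ is literally the orbit equivalence relation of $GL_n(\QQ)$ and Lemma \ref{lem_iso_free} (the variant of Lemma \ref{lem_thomas_free} adapted to $\mathcal M^*$) provides the freeness needed for Corollary \ref{cor_rigidity}. The invariant you are trying to manufacture is then already present as the type of the lattice part of $f(x)$, and its a.e.\ constancy is immediate: since $\phi(\Gamma_0)\subset SL_n(\ZZ)\subset SL_n(\ZZ_p)$, the map $x\mapsto (SL_n\ZZ_p)f(x)$ is $\Gamma_0$-invariant, so by ergodicity $f(X_0)$ lies in a single $SL_n(\ZZ_p)$-orbit, and $SL_n(\ZZ_p)$ preserves types (Proposition \ref{prop_type_preserve}). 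The endgame you describe (diagonal $g$ acting trivially on the canonical complement, freeness forcing the witness to be $ag^{-T}$, and $\nu_p(\det h^c)\equiv 1\pmod k$ with $k\geq 2$) then contradicts this constancy directly, with no second application of Ioana's theorem and no analysis of the $g$-dynamics. So your outline reproduces the shape of the paper's contradiction but is missing the mechanism that makes the relevant valuation invariant constant, which is the heart of the proof.
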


Recall from Section 4 that $\oiso_{n,p}^l$ is Borel bireducible with
the orbit equivalence relation induced by the action of $GL_n(\QQ)$ on
$\mathcal M(n,p,l)$.  In view of the freeness aspects of the proof of
Theorem \ref{thm_qiso_ergodic}, one might expect that it would be
necessary to work with the action of $PGL_n(\QQ)$.  However,
$PGL_n(\QQ)$ does not act on $\mathcal M(n,p,l)$!  Instead, we must
work with the action of $PGL_n(\QQ)$ on the space $\mathcal
M^*(n,p,l)$ of equivalence classes $[M]=\{aM:a\in\QQ_p^*\}$ for
$M\in\mathcal M(n,p,l)$.  (This is indeed a standard Borel space,
since the equivalence relation on $\mathcal M(n,p,k)$ with equivalence
classes $[M]$ is smooth.)  Let $(\oiso_{n,p}^l)^*$ denote the orbit
equivalence relation induced by the action of $GL_n(\QQ)$ on $\mathcal
M^*(n,p,l)$.

\begin{prop}
  The equivalence relation $(\oiso_{n,p}^l)^*$ is Borel bireducible
  with $\oiso_{n,p}^l$.
\end{prop}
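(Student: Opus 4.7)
My plan is to show that the $\QQ_p^*$-scaling action on $\mathcal M(n,p,l)$ is already absorbed by the action of $GL_n(\QQ)$ (modulo the trivial action of $\ZZ_p^*$), so that passing from $\mathcal M(n,p,l)$ to its quotient $\mathcal M^*(n,p,l)$ does not change the Borel complexity of the induced orbit relation.  Granting this, both the quotient map and any Borel selector will serve as Borel reductions in the appropriate directions.

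The key observation is that for any $M \in \mathcal M(n,p,l)$ and any $a \in \QQ_p^*$, the module $aM$ lies in the $GL_n(\QQ)$-orbit of $M$.  To see this, I would write $a = p^{\nu_p(a)} u$ with $u \in \ZZ_p^*$, and use the decomposition \eqref{eqn_decompose} to express $M = V_M \oplus L$ with $V_M$ a $\QQ_p$-subspace and $L$ a free $\ZZ_p$-module.  Since both $V_M$ and $L$ are invariant under multiplication by any element of $\ZZ_p^*$, we have $uM = M$, hence $aM = p^{\nu_p(a)} M = (p^{\nu_p(a)} I_n) M$; and since $p^{\nu_p(a)} I_n \in GL_n(\QQ)$, this lies in the $GL_n(\QQ)$-orbit of $M$.

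Granting this, I would next check that the quotient map $q: \mathcal M(n,p,l) \to \mathcal M^*(n,p,l)$ sending $M \mapsto [M]$ is a Borel reduction from $\oiso_{n,p}^l$ to $(\oiso_{n,p}^l)^*$.  The forward direction is immediate from $GL_n(\QQ)$-equivariance of $q$.  For the backward direction, if $g[M] = [M']$ for some $g \in GL_n(\QQ)$ then $gM = aM'$ for some $a \in \QQ_p^*$; by the observation, $aM' = p^{\nu_p(a)} M'$, so $(p^{-\nu_p(a)} g) M = M'$ with $p^{-\nu_p(a)} g \in GL_n(\QQ)$, giving $M \mathrel{\oiso_{n,p}^l} M'$.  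For the reduction in the other direction, I would invoke the smoothness of the $\QQ_p^*$-equivalence (noted in the statement of the proposition) to choose a Borel selector $s: \mathcal M^*(n,p,l) \to \mathcal M(n,p,l)$ with $q \circ s = \operatorname{id}$; the same calculation shows that $s$ is a Borel reduction from $(\oiso_{n,p}^l)^*$ to $\oiso_{n,p}^l$, since given $g[M] = [M']$ one has $g\, s([M]) = a\, s([M'])$ for some $a \in \QQ_p^*$, and by the observation this equals $(p^{\nu_p(a)} I_n)\, s([M'])$.

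The entire argument rests on the key observation about $aM$, which is an elementary consequence of \eqref{eqn_decompose} together with the fact that $\ZZ_p^*$ acts trivially on any $\ZZ_p$-module; I do not anticipate any real obstacle beyond verifying this.
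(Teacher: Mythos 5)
Your argument is correct and takes essentially the same route as the paper: the quotient map $M\mapsto[M]$ reduces $\oiso_{n,p}^l$ (via the completion) to $(\oiso_{n,p}^l)^*$ and a Borel section of it reduces back, with your key observation that $aM$ lies in the $GL_n(\QQ)$-orbit of $M$ (because units of $\ZZ_p$ fix $M$, so $aM=p^{\nu_p(a)}M$) being precisely the fact the paper leaves as ``clearly.''  The only cosmetic difference is that the paper gets the Borel section from the quotient map being countable-to-one (Lusin--Novikov) rather than from smoothness alone; your own observation, which shows each class is just $\{p^kM:k\in\ZZ\}$, supplies that countability in any case.
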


\begin{proof}
  Since the completion map $\Lambda$ witnesses that $\oiso_{n,p}^l$ is
  Borel bireducible with the orbit equivalence relation $E$ induced by
  the action of $GL_n(\QQ)$ on $\mathcal M(n,p,l)$, it is enough to
  check that $(\oiso_{n,p}^l)^*$ is Borel bireducible with $E$.
  Clearly, the map $M\mapsto[M]$ is a Borel reduction from $E$ to
  $(\oiso_{n,p}^l)^*$.  Since this map is countable-to-one, it admits
  a Borel section $\sigma$ which is evidently a Borel reduction from
  $(\oiso_{n,p}^l)^*$ to $E$.
\end{proof}

The advantage of working with the space $\mathcal M^*(n,p,l)$ is that
we can use the following variant of Lemma \ref{lem_thomas_free}.

\begin{lem}
  \label{lem_iso_free}
  Suppose that $f:Gr_k\QQ_p^n\rightarrow\mathcal M^*(n,p,l)$ is a weak
  Borel reduction from $E_{SL_n\ZZ}^k$ to $(\oiso_{n,p}^l)^*$.  Then
  there exists a conull subset $X\subset Gr_k\QQ_p^n$ such that
  \[f(X)\subset Fr(PGL_n(\QQ)\actson\mathcal M^*(n,p,l)).
  \]
\end{lem}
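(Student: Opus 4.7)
My plan is to follow the strategy of Lemma \ref{lem_thomas_free}, adapted to the $PGL_n(\QQ)$-space $\mathcal M^*(n,p,l)$, and then to exclude a degenerate alternative by invoking the countable-to-one nature of a weak Borel reduction.

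First I would consider the Borel set $\mathcal N \subseteq \mathcal M^*(n,p,l)$ consisting of those $[M]$ whose $PGL_n(\QQ)$-stabilizer is nontrivial. Since $\mathcal N$ is $PGL_n(\QQ)$-invariant and $f$ is a Borel homomorphism from $E_{SL_n\ZZ}^k$ to $(\oiso_{n,p}^l)^*$, the preimage $f^{-1}(\mathcal N)$ is $E_{SL_n\ZZ}^k$-invariant; and by ergodicity of $SL_n(\ZZ)\actson Gr_k\QQ_p^n$ it is null or conull. If it is null, then $X = Gr_k\QQ_p^n \setminus f^{-1}(\mathcal N)$ is the required conull set and we are done. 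Supposing toward a contradiction that $f^{-1}(\mathcal N)$ is conull, I would decompose $\mathcal N = \bigcup_{g \neq 1} F_g$ as a countable union of fixed-point sets $F_g = \{[M] : g[M] = [M]\}$ (countable because $PGL_n(\QQ)$ is countable), and pick some $g \neq 1$ for which $f^{-1}(F_g)$ is non-null. Following the analysis of Thomas in \cite[Lemma 5.1]{plocal}, this should then force $f$ to map a conull subset of $Gr_k\QQ_p^n$ into a single $PGL_n(\QQ)$-orbit $O$ of $\mathcal M^*(n,p,l)$.

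But $PGL_n(\QQ)$ is countable, so $O$ is countable, and because $f$ is countable-to-one (as a weak Borel reduction) $f^{-1}(O)$ is a countable union of countable sets, hence countable. This contradicts the nonatomicity of the Haar measure on $Gr_k\QQ_p^n$, completing the proof. The main obstacle is the step where one passes from a non-null $f^{-1}(F_g)$ to the conclusion that $f$ maps a conull set into a single orbit. In Thomas's original argument with codomain $Gr_l\QQ_p^n$, this relies on the comparatively small Borel structure of $F_g$ together with a saturation argument exploiting the ergodicity of $SL_n(\ZZ)$. The author's remark following Lemma \ref{lem_thomas_free} notes that Thomas's argument adapts to arbitrary Grassmann dimensions; essentially the same analysis should transport to $\mathcal M^*(n,p,l)$, since the structural features it needs—$PGL_n(\QQ)$-invariance and Borelness of $\mathcal N$, countability of $PGL_n(\QQ)$, and a tractable description of each $F_g$—all remain available here.
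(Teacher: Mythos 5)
There is a genuine gap at the step you yourself flag as the main obstacle. Everything up to that point is fine (the set $\mathcal N$ of points with nontrivial $PGL_n(\QQ)$-stabilizer is Borel and invariant, $f^{-1}(\mathcal N)$ is $E_{SL_n\ZZ}^k$-invariant, ergodicity gives null or conull, and the final countability contradiction is exactly right). But the pivotal implication --- that a non-null $f^{-1}(F_g)$ forces $f$ to map a conull set into a single $PGL_n(\QQ)$-orbit of $\mathcal M^*(n,p,l)$ --- is precisely the nontrivial content of Thomas's dichotomy, and you do not prove it; you only assert that the analysis ``should transport.'' That is not automatic: Thomas's argument in \cite[Lemma 5.1]{plocal} analyzes the fixed-point sets of elements of $PGL_n(\QQ)$ acting on a \emph{Grassmann} space, exploiting their algebraic/geometric structure, whereas points of $\mathcal M^*(n,p,l)$ carry additional lattice data, so the fixed sets $F_g$ and the orbit structure inside them are genuinely different objects. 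The remark you appeal to in the paper only says that Thomas's argument extends from $k=n-1$ to arbitrary Grassmann dimensions $k,l$; it says nothing about replacing the codomain by the module space $\mathcal M^*(n,p,l)$, so citing it here does not close the gap.

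The paper closes exactly this gap without re-proving any dichotomy, by projecting back to the Grassmannian: since $[M]=[M']$ implies $V_M=V_{M'}$ (notation of \eqref{eqn_decompose}), the map $\bar f(x):=V_{f(x)}$ is a well-defined weak Borel reduction from $E_{SL_n\ZZ}^k$ to $E_{GL_n\QQ}^l$, and Lemma \ref{lem_thomas_free} applies to $\bar f$ \emph{as stated}. The single-orbit horn is ruled out just as in your final paragraph (a $GL_n(\QQ)$-orbit in $Gr_l\QQ_p^n$ is countable, its preimage under $[M]\mapsto V_M$ is countable, and $f$ is countable-to-one, contradicting nonatomicity), so $\bar f$ maps a conull $X$ into $Fr(PGL_n(\QQ)\actson Gr_l\QQ_p^n)$. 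Freeness then pulls back because $\stab_{PGL_n(\QQ)}[M]\subset\stab_{PGL_n(\QQ)}V_M$: if $gf(x)=f(x)$ for some $1\neq g$, then $gV_{f(x)}=V_{f(x)}$, contradicting $\bar f(x)\in Fr$. If you want to salvage your direct approach, you would essentially be forced to make this same observation --- that $F_g$ in $\mathcal M^*(n,p,l)$ sits over the fixed set of $g$ in $Gr_l\QQ_p^n$ --- at which point composing with $[M]\mapsto V_M$ from the start is the cleaner route.
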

  
\begin{proof}
  If $[M]=[M']$, it is clear that $V_M=V_{M'}$ (see the notation of
  \eqref{eqn_decompose}).  Hence the map $[M]\mapsto V_M$ is
  well-defined, and we may consider the function $\bar
  f:Gr_k\QQ_p^n\rightarrow Gr_l\QQ_p^n$ given by $\bar f(x)=V_{f(x)}$.
  Clearly, $\bar f$ is a weak Borel reduction from $E_{SL_n\ZZ}^k$ to
  $E_{GL_n\QQ}^l$.  We claim that there exists a conull subset
  $X\subset Gr_k\QQ_p^n$ such that
  \[\bar f(X)\subset Fr(PGL_n(\QQ)\actson Gr_l\QQ_p^n).
  \]
  If this is not the case, then Theorem \ref{lem_thomas_free} implies
  that there exists a conull subset $X'\subset Gr_l\QQ_p^n$ such that
  $\bar f(X')$ is contained in a single $GL_n(\QQ)$-orbit of
  $Gr_l\QQ_p^n$.  Since $f$ is countable-to-one, it follows that
  $f(X')$ is contained in a countable set, contradicting that the Haar
  measure is nonatomic.

  Now, for $x\in X$, we have that $\bar f(x)\in Fr(PGL_n(\QQ)\actson
  Gr_l\QQ_p^n)$.  This means by definition that $1\neq g\in
  PGL_n(\QQ)$ implies $gV_{f(x)}\neq V_{f(x)}$.  Clearly,
  $gV_{f(x)}=V_{gf(x)}$, and so we have $V_{gf(x)}\neq V_{f(x)}$.  It
  follows that $gf(x)\neq f(x)$, which means that $f(x)\in
  Fr(PGL_n(\QQ)\actson\mathcal M^*(n,p,l))$, as desired.
\end{proof}

We extend the notion of type to $\mathcal M^*(n,p,l)$ by letting
$\type[M]=\type(L)$, where $M=(V,L)$.  This is well-defined, as
$aM=(V,aL)$ and we have already observed that the type of $L$ depends
only on its class.  The following fact is the last we shall need in
the proof of Theorem \ref{thm_Acase2'}.

\begin{prop}
  \label{prop_type_preserve}
  The group $SL_n(\ZZ_p)$ acts in a type-preserving fashion on
  $\mathcal M^*(n,p,l)$.
\end{prop}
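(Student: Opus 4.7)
My plan is to give a basis-free reformulation of $\type$ that makes the $SL_n(\ZZ_p)$-invariance transparent. For any $M\in\mathcal M(n,p,l)$, the quotient $\QQ_p^n/V_M$ is an $(n-l)$-dimensional $\QQ_p$-space containing two natural full-rank $\ZZ_p$-lattices: the reference lattice $N_M:=(\ZZ_p^n+V_M)/V_M$, which is just the image of the standard lattice, and $\bar M:=M/V_M$, which is free of rank $n-l$ by the decomposition \eqref{eqn_decompose}. I would then define an intrinsic type $\tau(M):=\nu_p(\det S)\pmod{n-l}$, where $S$ is any $(n-l)\times(n-l)$ matrix expressing a $\ZZ_p$-basis of $\bar M$ in terms of a $\ZZ_p$-basis of $N_M$; different choices of bases alter $S$ only by left or right multiplication by elements of $GL_{n-l}(\ZZ_p)$, so $\nu_p(\det S)$ is well-defined as an integer.

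The next step is to verify that $\tau(M)$ agrees with the definitional $\type[M]=\type(L)$ for $M=(V,L)$. Property (a) of the canonical complement $V^c=\QQ_pe_{j_1}\oplus\cdots\oplus\QQ_pe_{j_{n-l}}$ tells us that $\ZZ_p^n=(V\cap\ZZ_p^n)+\ZZ_pe_{j_1}+\cdots+\ZZ_pe_{j_{n-l}}$; reducing modulo $V$ yields $N_M=\ZZ_p\bar e_{j_1}\oplus\cdots\oplus\ZZ_p\bar e_{j_{n-l}}$. In this basis of $N_M$, the map $e_i\mapsto\bar e_{j_i}$ identifies $L\subset\QQ_p^{n-l}$ with $\bar M\subset\QQ_p^n/V_M$, so $\tau(M)\equiv\nu_p(\det A)=\type(L)\pmod{n-l}$, where $A$ has columns a $\ZZ_p$-basis of $L$ in the standard basis.

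The main conclusion is then immediate: for $g\in SL_n(\ZZ_p)$, the induced isomorphism $\bar g:\QQ_p^n/V_M\to\QQ_p^n/V_{gM}$ (well-defined since $V_{gM}=gV_M$) tautologically carries $\bar M$ to $\overline{gM}$, and since $g\in GL_n(\ZZ_p)$ stabilizes $\ZZ_p^n$, it also carries $N_M$ onto $N_{gM}=(\ZZ_p^n+gV_M)/gV_M$. Hence the same change-of-basis matrix witnesses the relative position in both settings, giving $\tau(gM)=\tau(M)$ and therefore $\type[gM]=\type[M]$. That $\type$ descends to $\mathcal M^*(n,p,l)$ is also clear from this formulation: rescaling $M\mapsto aM$ multiplies $S$ by $aI$, shifting $\nu_p(\det S)$ by $(n-l)\nu_p(a)\equiv0\pmod{n-l}$. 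The only nontrivial piece is matching the intrinsic and definitional types in the second step; the remaining manipulations are purely formal, and one sees clearly that the argument uses exactly the hypothesis $g\ZZ_p^n=\ZZ_p^n$, which is what confines the statement to $SL_n(\ZZ_p)$ rather than a larger subgroup of $GL_n(\QQ_p)$.
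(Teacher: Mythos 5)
Your proof is correct, but it takes a genuinely different route from the paper. The paper works in coordinates: it first treats the special case $V=V'=V_0$, where (as in Proposition \ref{prop_invariant}) an element $g$ fixing $V_0$ acts on the lattice part through its lower right-hand $(n-l)\times(n-l)$ corner $g^c\in GL_{n-l}(\ZZ_p)$, so Proposition \ref{prop_type} gives $\type(g^cL)=\type(L)$; the general case is then ``translated'' to this one using transitivity, a step the paper only sketches. You instead replace $\type$ by an intrinsic invariant: the valuation of the determinant of the change-of-basis matrix between the reference lattice $(\ZZ_p^n+V_M)/V_M$ and $M/V_M$ inside $\QQ_p^n/V_M$, taken modulo $n-l$. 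Your verification that this agrees with the definitional $\type[M]=\type(L)$ (via property (a) of the canonical complementary subspace, which identifies the image of $\ZZ_p^n$ in the quotient with the span of the $\bar e_{j_i}$) is the one nontrivial step, and it is carried out correctly. After that, invariance is immediate because $g\ZZ_p^n=\ZZ_p^n$, with no case reduction or block computation needed; as a bonus your argument shows invariance under all of $GL_n(\ZZ_p)$, not just $SL_n(\ZZ_p)$, and re-derives the well-definedness of $\type$ on classes $[M]$. The trade-off is that the paper's proof is a short computation leaning on Propositions \ref{prop_invariant} and \ref{prop_type} already in hand, while yours front-loads a basis-free reformulation that makes the equivariance transparent and eliminates the unspelled translation step.
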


\begin{proof}
  We must show that whenever $g\in SL_n(\ZZ_p)$ and $g(V,L)=(V',L')$,
  we have that $\type(L)=\type(L')$.  First suppose that $V=V'=V_0$,
  where $V_0=\QQ_pe_1\oplus\cdots\oplus\QQ_pe_l$.  Then in particular,
  $g$ fixes $V_0$.  Letting $g^c$ denote the $(n-l)\times(n-l)$ lower
  right-hand corner of $g$, we can argue as in the proof of
  Proposition \ref{prop_invariant} that $g(V_0,L)=(V_0,g^cL)$ and so
  $L'=g^cL$.  But $g^c\in GL_{n-l}(\ZZ_p)$, and so Proposition
  \ref{prop_type} implies that $\type(L')=\type(L)$.

  Also as in the proof of Proposition \ref{prop_invariant}, this
  special case can be translated to establish the result in the
  general case.
\end{proof}

\begin{proof}[Proof of Theorem \ref{thm_Acase2'}]
  Suppose that $f:Gr_k\QQ_p^n\rightarrow\mathcal M^*(n,p,n\ord-k)$ is
  a weak Borel reduction from $E_\oiso^k$ to
  $(\oiso_{n,p}^{n\ord-k})^*$.  Then clearly $f$ is a weak Borel
  reduction from $E_{SL_n\ZZ}^k$ to $(\oiso_{n,p}^{n\ord-k})^*$.
  Applying the arguments of Theorem \ref{thm_qiso_ergodic} (and
  substituting Lemma \ref{lem_iso_free} instead of Lemma
  \ref{lem_thomas_free} to define a cocycle), we may suppose that
  there exists an ergodic component $\Gamma_0\actson X_0$ for
  $SL_n(\ZZ)\actson Gr_k\QQ_p^n$, and a homomorphism
  $\phi:\Gamma_0\rightarrow GL_n(\QQ)$ such that
  $(\phi,f):\Gamma_0\actson X_0\longrightarrow
  GL_n(\QQ)\actson\mathcal M^*(n,p,n\ord-k)$ is a homomorphism of
  permutation groups.

  Since the map $[M]\rightarrow V_M$ is $GL_n(\QQ_p)$-preserving, the
  composition $\bar f:x\mapsto V_{f(x)}$ makes
  \[(\phi,\bar f):\Gamma_0\actson Z_0\longrightarrow GL_n(\QQ)\actson
  Gr_{n-k}\QQ_p^n
  \]
  into a homomorphism of permutation groups.  By Lemma
  \ref{lem_margulis2}, we may replace $\Gamma_0\actson X_0$ with an
  ergodic subcomponent to suppose that $\im(\phi)\subset SL_n(\ZZ)$.
  As in the proof of Theorem~\ref{thm_Bpart1'}, we make the following
  simplifications.

  \begin{claim}
    We may suppose that $\Gamma_0=\Gamma_{p^t}$ is a principle
    congruence subgroup and that $X_0$ is equal, modulo a null set, to
    the particular component $Z_0=(K_{p^t})V_0$.\hfill$\dashv$
  \end{claim}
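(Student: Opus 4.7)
The plan is to mimic exactly the corresponding simplification carried out in the proof of Theorem~\ref{thm_Bpart1'}; the target space $\mathcal M^*(n,p,n-k)$ does not interact with the argument, so the same two-step reduction applies almost verbatim.

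First I would invoke Proposition~\ref{prop_congruence}: every ergodic component for a finite-index subgroup of $SL_n(\ZZ)$ acting on $Gr_k\QQ_p^n$ contains a principle congruence component. Passing to such a subcomponent, I may suppose that $\Gamma_0=\Gamma_{p^t}$ for some $t$ and that $X_0$ equals a single $K_{p^t}$-orbit modulo null. This replacement preserves the homomorphism-of-permutation-groups structure of $(\phi,f)$ under restriction, and it retains the inclusion $\im(\phi)\subset SL_n(\ZZ)$ established earlier.

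Next, as in the analogous step of Theorem~\ref{thm_Bpart1'}, $SL_n(\ZZ)$ acts transitively on the set of $K_{p^t}$-orbits on $Gr_k\QQ_p^n$ (by density of $SL_n(\ZZ)$ in $SL_n(\ZZ_p)$ together with the transitive action of $SL_n(\ZZ_p)$ on $Gr_k\QQ_p^n$), so I can choose $\gamma\in SL_n(\ZZ)$ with $\gamma Z_0=X_0$ modulo null. I then translate the whole setup by $\gamma$, defining
\[
f'(x):=f(\gamma x),\qquad \phi'(g):=\phi(\gamma g\gamma^{-1}),
\]
where normality of $\Gamma_{p^t}$ in $SL_n(\ZZ)$ ensures $\phi'$ is defined on all of $\Gamma_{p^t}$ with image still inside $SL_n(\ZZ)$. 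A direct calculation gives $f'(gx)=\phi'(g)f'(x)$, so $(\phi',f')$ is a homomorphism of permutation groups $\Gamma_{p^t}\actson Z_0\longrightarrow GL_n(\QQ)\actson\mathcal M^*(n,p,n-k)$. Moreover, since $\gamma\in SL_n(\ZZ)$, Lemma~\ref{lem_containments} gives $x\mathrel{E}_\oiso^k\gamma x$, so precomposing by the $E_\oiso^k$-preserving bijection $x\mapsto\gamma x$ keeps $f'$ a weak Borel reduction from the restriction of $E_\oiso^k$ to $Z_0$ into $(\oiso_{n,p}^{n-k})^*$. Replacing $(\phi,f)$ with $(\phi',f')$ then establishes the claim.

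No real obstacle is present beyond the bookkeeping; the essential point is that choosing $\gamma$ from $SL_n(\ZZ)$ rather than from $SL_n(\ZZ_p)$ is what simultaneously makes $\phi'$ well-defined with image in $SL_n(\ZZ)$ (via normality of $\Gamma_{p^t}$) and places $x$ and $\gamma x$ in the same $E_\oiso^k$-class (via Lemma~\ref{lem_containments}), so that the weak-reduction property survives the translation.
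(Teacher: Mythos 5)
Your argument is correct and is essentially the paper's own: the claim is marked with $\dashv$ precisely because it is the same two-step simplification already carried out in the proof of Theorem~\ref{thm_Bpart1'}, namely passing to a principal congruence component via Proposition~\ref{prop_congruence} and then translating by some $\gamma\in SL_n(\ZZ)$ (using transitivity of $SL_n(\ZZ)$ on the $K_{p^t}$-orbits), with $f'(x)=f(\gamma x)$, $\phi'(g)=\phi(\gamma g\gamma^{-1})$, and Lemma~\ref{lem_containments} guaranteeing $x\mathrel{E}_\oiso^k\gamma x$ so that the reduction property survives. Your added remarks on normality of $\Gamma_{p^t}$ and on the irrelevance of the target space $\mathcal M^*(n,p,n-k)$ are accurate bookkeeping, not a departure from the paper's route.
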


  \begin{claim}
    We may suppose that there exists $h\in GL_n(\QQ)$ such that $\bar
    f(x)=x^\bot$ for all $x\in X_0$.
  \end{claim}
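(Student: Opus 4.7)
The plan is to follow the corresponding step in the proof of Theorem~\ref{thm_Bpart1'}: first apply Theorem~\ref{thm_affine} to the permutation group homomorphism $(\phi,\bar f)$ to produce $h\in GL_n(\QQ)$ with $\bar f(x)=hx^\bot$ almost everywhere, and then absorb $h$ by replacing $f$ with the $GL_n(\QQ)$-translate $h^{-1}\cdot f$.

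Before invoking Theorem~\ref{thm_affine}, two side conditions are needed. First, $\bar f(X_0)$ must be contained in a single ergodic component $X_1$ for the action of $\phi(\Gamma_0)$ on $Gr_{n-k}\QQ_p^n$; this follows in the usual way, as the preimages under $\bar f$ of the ergodic components are $\Gamma_0$-invariant, so by ergodicity of $\Gamma_0\actson X_0$ exactly one is conull and the rest can be discarded. Second, $\phi$ must be injective. If it is not, then Margulis's normal subgroup theorem gives $\ker\phi$ of finite index in $\Gamma_0$, so on passing to an ergodic subcomponent one may suppose $\phi=1$; but then $\bar f$ is $\Gamma_0$-invariant and hence almost constant, forcing $f$ to map a conull set into a countable union of points of $\mathcal M^*(n,p,n-k)$, contradicting Lemma~\ref{lem_iso_free} together with the nonatomicity of the Haar measure on $Gr_k\QQ_p^n$.

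With these in hand, Theorem~\ref{thm_affine} applies to $(\phi,\bar f)\colon\Gamma_0\actson X_0\longrightarrow\phi(\Gamma_0)\actson X_1$. The hypothesis $k\neq n/2$ gives $n-k\neq k$, so cases (a) and (c) of that theorem are impossible and we land in case (b): there exists $h\in GL_n(\QQ)$ such that $\bar f(x)=h x^\bot$ for almost every $x\in X_0$.

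To absorb $h$, I would replace $f$ by $f'(x):=h^{-1}\cdot f(x)$, where $h^{-1}\in GL_n(\QQ)$ acts on $\mathcal M^*(n,p,n-k)$ in the natural way. Since $f'(x)$ and $f(x)$ lie in the same $GL_n(\QQ)$-orbit of $\mathcal M^*(n,p,n-k)$, $f'$ is still a weak Borel reduction from $E_\oiso^k$ to $(\oiso_{n,p}^{n-k})^*$, and composing with the vector-space-part map yields $\overline{f'}(x)=h^{-1}(hx^\bot)=x^\bot$ as required. As in the corresponding step of Theorem~\ref{thm_Bpart1'}, the explicit homomorphism $\phi$ is discarded by this absorption; what is retained for the anticipated final type-theoretic contradiction (in the pattern of Proposition~\ref{prop_type_preserve} together with the description~\eqref{eqn_Z0} of $Z_0$) is precisely the structure $X_0=Z_0$ from the previous claim combined with $\overline{f'}(x)=x^\bot$, both of which are preserved because the replacement $f\mapsto h^{-1}\cdot f$ affects only the codomain. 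The main obstacle I anticipate is the injectivity step above — verifying that Lemma~\ref{lem_iso_free} really does combine with nonatomicity to rule out the almost-constant case — but this should be a routine adaptation of the corresponding argument in the proof of Theorem~\ref{thm_qiso_ergodic}.
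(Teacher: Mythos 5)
Your proposal takes the same route as the paper: invoke Theorem~\ref{thm_affine} (case (b) being forced because $k\neq n/2$ gives $n-k\neq k$) and then absorb $h$ by translating $f$ inside its $GL_n(\QQ)$-orbit; your preliminary checks (injectivity of $\phi$ via Margulis, and $\bar f(X_0)$ landing in a single ergodic component) are exactly the ``as with last time'' material imported from Theorem~\ref{thm_qiso_ergodic}. The one point to correct is your closing remark that the homomorphism $\phi$ is discarded by the absorption: the paper does not discard it but replaces it by $-T$, using that the proof of Theorem~\ref{thm_affine} gives $\phi=\chi_h\circ(-T)$, and it immediately records that $(\phi,f)$ remains a permutation group homomorphism with $\im(\phi)\subset SL_n(\ZZ)$ --- this retained property (not just $X_0=Z_0$ and $\bar f(x)=x^\bot$) is what makes the next claim work, since the $\Gamma_0$-invariance of $x\mapsto(SL_n\ZZ_p)f(x)$, and hence the constancy of $\type(f(x))$, uses $f(\gamma x)=\phi(\gamma)f(x)$ with $\phi(\gamma)\in SL_n(\ZZ_p)$. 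Your replacement $f\mapsto h^{-1}\cdot f$ does in fact preserve this, because the adjusted homomorphism is $\gamma\mapsto h^{-1}\phi(\gamma)h=\gamma^{-T}$, but that identification must be stated (it uses the relation between $\phi$ and $h$ coming from the proof of Theorem~\ref{thm_affine}, or an a.e.\ freeness argument), rather than asserting that only the two items you list need to survive the modification.
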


  \begin{claimproof}
    As with last time, the hypotheses of Theorem~\ref{thm_affine} are
    satisfied, and we may suppose that there exists $h\in GL_n(\QQ)$
    such that $\phi(\gamma)=\chi_h\circ(-T)$ and $\bar
    f(x)=h^{-1}x^\bot$.  Hence, replacing $f$ by $hf$ and $\phi$ with
    $-T$, we obtain the desired result.
  \end{claimproof}

  Observe that in the last argument, we have retained the property
  that $(\phi,f)$ is a homomorphism of permutation groups such that
  $\im(\phi)\subset SL_n(\ZZ)$.  For the next claim, recall that we have
  defined that $\type([M])$ is the type of any $L$ such that
  $(V,L)\in[M]$.

  \begin{claim}
    We can suppose that there is a fixed $0\leq t<k$ such that
    $\type(f(x))=t$ for all $x\in X_0$.
  \end{claim}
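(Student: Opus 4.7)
The plan is to observe that the composition $\type\circ f:X_0\to\ZZ/k\ZZ$ takes only finitely many values (namely those in $\ZZ/k\ZZ$, since the lattice part of an element of $\mathcal M^*(n,p,n\ord-k)$ lives in a $k$-dimensional complement), and then to show that this function is $\Gamma_0$-invariant, so that ergodicity of $\Gamma_0\actson X_0$ forces it to be almost-everywhere constant.

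First I would verify invariance. Because $(\phi,f)$ is a permutation group homomorphism, for every $\gamma\in\Gamma_0$ and $x\in X_0$ we have $f(\gamma x)=\phi(\gamma)f(x)$. By the previous claim we have retained the property $\phi(\Gamma_0)\subset SL_n(\ZZ)\subset SL_n(\ZZ_p)$, and so Proposition~\ref{prop_type_preserve} applies to give $\type(\phi(\gamma)f(x))=\type(f(x))$. Hence $\type(f(\gamma x))=\type(f(x))$, i.e., $x\mapsto\type(f(x))$ is a $\Gamma_0$-invariant Borel function on $X_0$ taking values in the finite set $\{0,1,\ldots,k-1\}$.

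Now since $\Gamma_0\actson X_0$ is ergodic, any such $\Gamma_0$-invariant Borel function into a standard Borel space is almost-everywhere constant; in particular there exists $0\leq t<k$ such that $\type(f(x))=t$ for all $x$ in some conull subset $X_0'\subset X_0$. Deleting the null set $X_0\setminus X_0'$ and renaming $X_0':=X_0$ (which remains an ergodic component up to a null set, and on which $(\phi,f)$ is still a permutation group homomorphism), we obtain the desired simplification.

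The only thing to check is that this deletion is compatible with all the structure we have previously accumulated, but this is routine: removing a $\Gamma_0$-invariant null set from $X_0$ (and we may replace $X_0'$ by its $\Gamma_0$-saturation, which is still conull) does not affect the ergodicity of $\Gamma_0\actson X_0$, the conclusion $\bar f(x)=x^\bot$, the identification of $X_0$ with $Z_0=(K_{p^t})V_0$ modulo a null set, or the containment $\im(\phi)\subset SL_n(\ZZ)$. There is no real obstacle here; the substance of the argument is simply Proposition~\ref{prop_type_preserve} combined with ergodicity.
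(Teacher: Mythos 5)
Your proof is correct, and it takes a mildly but genuinely different route from the paper's. The paper applies ergodicity to the map $x\mapsto(SL_n\ZZ_p)f(x)$, viewed as a $\Gamma_0$-invariant Borel map into the standard Borel space $F(\mathcal M^*(n,p,n-k))$ of closed subsets, concluding first that $f(X_0)$ lies in a \emph{single} $SL_n(\ZZ_p)$-orbit, and only then invokes Proposition \ref{prop_type_preserve} to get constancy of the type. You instead apply Proposition \ref{prop_type_preserve} first, to see that $x\mapsto\type(f(x))$ is itself $\Gamma_0$-invariant (using $f(\gamma x)=\phi(\gamma)f(x)$ and $\phi(\Gamma_0)\subset SL_n(\ZZ)\subset SL_n(\ZZ_p)$), and then apply ergodicity directly to this $\ZZ/k\ZZ$-valued function. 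Your version is more economical: it avoids the hyperspace of closed subsets (and the attendant checks that orbits are closed and that the orbit map is Borel), needing only the routine Borelness of $\type\circ f$, which is harmless since the lattice part ranges over a countable set. What the paper's formulation buys is the stronger intermediate fact that $f(X_0)$ is contained in one $SL_n(\ZZ_p)$-orbit; but inspecting the rest of the argument, only the constancy of $\type(f(x))$ is used in the final contradiction, so your weaker conclusion suffices. Your closing remarks about deleting a null set are fine (indeed the set $\{x\in X_0:\type(f(x))=t\}$ is already $\Gamma_0$-invariant, so no saturation is even needed), and the range $0\leq t<k$ is right because the free part of a module in $\mathcal M^*(n,p,n-k)$ has rank $k$.
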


  \begin{claimproof}
    Let $F(\mathcal M^*(n,p,n\ord-k))$ denote the standard Borel space
    of closed subsets of $\mathcal M^*(n,p,n\ord-k)$ (where $\mathcal
    M(n,p,n\ord-k)$ is considered with the quotient topology induced
    by the map $M\mapsto[M]$).  Since $\phi(\Gamma_0)\subset
    SL_n(\ZZ)$, we clearly have that the map $X_0\rightarrow
    F(\mathcal M^*(n,p,n\ord-k))$ given by
    \[x\mapsto(SL_n\ZZ_p)f(x)
    \]
    is $\Gamma_0$-invariant.  By ergodicity of $\Gamma_0\actson X_0$,
    we may suppose that $f(X_0)$ is contained in a fixed
    $SL_n(\ZZ_p)$-orbit.  Hence, the claim follows from Proposition
    \ref{prop_type_preserve}.
  \end{claimproof}
  
  The proof now concludes in a manner similar to that of Theorem
  \ref{thm_Bpart1'}.  Roughly speaking, we shall choose generic
  $x,gx\in X_0$ so as to guarantee that $f(gx)=g^{-T}f(x)$.  We shall
  also select $g\in GL_n(\QQ)$ so that $g^{-T}$ fails to preserve
  lattice types on $f(X_0)$, contradicting the last claim.

  More specifically, let $g=\diag(1/p,1,\ldots,1)$, where
  $\diag(d_1,\ldots,d_n)$ denotes the diagonal matrix with
  $a_{ii}=d_i$.  Arguing as in the proof of Theorem \ref{thm_Bpart1},
  we can choose $x\in X_0$ such that $gx\in X_0$ and $x^\bot\in
  Fr(PGL_n(\QQ)\actson Gr_{n\ord-k}\QQ_p^n)$.  Now, both $x$ and $gx$
  have the canonical complementary subspace
  $V_1=\QQ_pe_{k+1}\oplus\cdots\oplus\QQ_pe_n$.  Since $g$ acts on
  $V_1$ by the identity map, we clearly have $g(x,L_0)=(gx,L_0)$, and
  so $g$ witnesses that $x\mathrel{E}_\oiso^kgx$.  It follows that
  \mbox{$f(x)(\iso_{n,p}^{n\ord-k})^*f(gx)$}, and hence there exists
  $h\in GL_n(\QQ)$ such that $hf(x)=f(gx)$.  Then $h\bar f(x)=\bar
  f(gx)$, so now
  \[hx^\bot=h\bar f(x)=\bar f(gx)=(gx)^\bot=g^{-T}x^\bot.
  \]
  Since we have chosen $x^\bot$ so that it is not fixed by any element
  of $PGL_n(\QQ)\backslash\{1\}$, there exists $a\in\QQ_p^*$ such that
  $h=ag^{-T}=\diag(ap,a,\ldots,a)$ (see Figure \ref{fig_witness}
  below).

  \begin{figure}[h]
  \begin{displaymath}
  \def\g#1{\save[].[d]!C="g#1"*[F]\frm{}\restore}
  \xymatrix @C=2cm @R=1.3cm {
    \g1 x\quad\ar@<-1ex>@{|->}[d]^g &
    \g2 f(x)\quad\ar@<-1ex>@{|->}[d]^h &
    \g3 x^\bot\quad\ar@<-1ex>@{|->}[d]^{g^{-T}}\\
    gx\quad \ar@{}[r]_</.3cm/{X_0} &
    f(gx)\quad \ar@{}[r]_</.48cm/{f(X_0)} &
    (gx)^\bot\quad \ar@<0cm>@{}[l]_</-2.05cm/{(X_0)^\bot}
    \ar "g1";"g2"^f
    \ar "g2";"g3"^{[M]\mapsto V_M}
  }
  \end{displaymath}
  \caption{We have guaranteed that $f(gx)=hf(x)$, where
    $h=ag^{-T}$.\label{fig_witness}}
  \end{figure}

  Finally, $x^\bot,(gx)^\bot$ each have the canonical complementary
  subspace $V_0=\QQ_pe_1\oplus\cdots\oplus\QQ_pe_k$.  Letting $h^c$
  denote the \emph{upper left}-hand corner of $h$, we have
  $\nu_p(h^c)\equiv 1\mod k$, and so by Proposition \ref{prop_type},
  $h^c$ acts in a type-\emph{altering} fashion on $\mathcal L(V_0)$.
  But we have arranged for $\type(f(x))=\type(f(gx))$, a
  contradiction.
\end{proof}

The second part of Theorem B follows immediately.

\begin{thm}[Theorem B, part 2]
  If $n\geq3$ and $k\leq n-2$, then
  $E_{GL_n\QQ}^k\;\not\leq_B^w\;E_\oiso^k$.
\end{thm}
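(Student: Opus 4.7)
The plan is to derive this immediately from Theorem~\ref{thm_Acase2'} by chaining the hypothesized reduction with the orthogonal-complement duality between $Gr_k\QQ_p^n$ and $Gr_{n-k}\QQ_p^n$ together with the trivial weak reduction $E_\oiso^{n-k}\leq_B^w E_{GL_n\QQ}^{n-k}$.  I would argue by contradiction: assume $f:Gr_k\QQ_p^n\to Gr_k\QQ_p^n$ is a weak Borel reduction from $E_{GL_n\QQ}^k$ to $E_\oiso^k$, and manufacture from it a weak Borel reduction $E_\oiso^{n-k}\leq_B^w E_\oiso^k$, which is forbidden by (the swapped form of) Theorem~\ref{thm_Acase2'}.

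The key geometric observation is that the orthogonal complement map $\Phi:Gr_{n-k}\QQ_p^n\to Gr_k\QQ_p^n$, $W\mapsto W^\bot$, is a Borel bijection satisfying $\Phi(gW)=g^{-T}\Phi(W)$ for every $g\in GL_n(\QQ)$.  Since $g\mapsto g^{-T}$ is a bijection of $GL_n(\QQ)$, $\Phi$ sends $GL_n(\QQ)$-orbits to $GL_n(\QQ)$-orbits, and is therefore a Borel isomorphism of equivalence relations $E_{GL_n\QQ}^{n-k}\cong E_{GL_n\QQ}^k$.  Meanwhile, Lemma~\ref{lem_containments}(a) (applied with $n-k$ in place of $k$) gives the inclusion $E_\oiso^{n-k}\subset E_{GL_n\QQ}^{n-k}$, so the identity map on $Gr_{n-k}\QQ_p^n$ is trivially a countable-to-one Borel homomorphism---that is, a weak Borel reduction---from $E_\oiso^{n-k}$ to $E_{GL_n\QQ}^{n-k}$.

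Composing these three maps (identity, then $\Phi$, then $f$) shows that $W\mapsto f(W^\bot)$ is a weak Borel reduction from $E_\oiso^{n-k}$ to $E_\oiso^k$.  But by Theorem~\ref{thm_Acase2'} applied with $n-k$ in place of $k$ (whose hypothesis $2\leq n-k\leq n-1$ and $n-k\neq n/2$ is equivalent to $1\leq k\leq n-2$ and $k\neq n/2$), we have $E_\oiso^{n-k}\not\leq_B^w\oiso_{n,p}^k$; and by Lemma~\ref{lem_containments}(b), $\oiso_{n,p}^k$ is Borel bireducible with $E_\oiso^k$, so equivalently $E_\oiso^{n-k}\not\leq_B^w E_\oiso^k$.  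This is the desired contradiction.

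I do not expect any real obstacle, as the proof is a routine chaining of already established results.  The only delicate point is the edge case $k=n/2$ (possible only for even $n\geq 4$), where the duality collapses: since $E_\oiso^{n-k}=E_\oiso^k$, the chain above produces only the tautology $E_\oiso^k\leq_B^w E_\oiso^k$ and yields no contradiction.  Handling this case requires a direct adaptation of the proof of Theorem~\ref{thm_Acase2'}, rerunning the cocycle-superrigidity and lattice-type argument with the type-altering diagonal matrix $g\in GL_n(\QQ)$ chosen so that its relevant corner acts on lattices of $\QQ_p^k$ rather than of $\QQ_p^{n-k}$.
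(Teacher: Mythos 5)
Your proof is correct and is essentially the paper's own argument: dualize via the orthogonal complement map, use the containment $E_\oiso^{n-k}\subset E_{GL_n\QQ}^{n-k}$ from Lemma \ref{lem_containments}(a) to turn the hypothesized weak reduction into a weak reduction from $E_\oiso^{n-k}$ to $E_\oiso^k$, and contradict Theorem \ref{thm_Acase2'} (transported through the bireducibility in Lemma \ref{lem_containments}(b)). Your caveat about $k=n/2$ is well taken and in fact applies verbatim to the paper's two-line proof, since Theorem \ref{thm_Acase2'} assumes $k\neq n/2$ and the chain then yields only the tautology $E_\oiso^k\leq_B^w E_\oiso^k$; the repair is as you suggest---rerun the superrigidity-plus-lattice-type argument with target $\mathcal M^*(n,p,k)$, noting that Theorem \ref{thm_affine}(c) now allows two alternatives, so one must handle both the branch $\bar f(x)=hx$ (using $g=\diag(1,\ldots,1,p)$ and the lower right-hand corner, as in Theorem \ref{thm_Bpart1'}) and the branch $\bar f(x)=hx^\bot$ (using $g=\diag(1/p,1,\ldots,1)$ and the upper left-hand corner, as in Theorem \ref{thm_Acase2'}).
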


\begin{proof}
  The orthogonal complement map witnesses that $E_{GL_n\QQ}^k$ is
  Borel bireducible with $E_{GL_n\QQ}^{n\ord-k}$.  Hence, if
  $E_{GL_n\QQ}^k\leq_B^wE_\oiso^k$ then there exists a weak Borel
  reduction $f$ from $E_{GL_n\QQ}^{n-k}$ to $E_\oiso^k$.  Clearly, $f$
  is also a weak Borel reduction from $E_\oiso^{n-k}$ to $E_\oiso^k$,
  contradicting Theorem \ref{thm_Acase2'}.
\end{proof}

The keen-eyed reader will have noticed that Theorem A is as yet
incomplete.

\begin{thm}[Theorem A, case 3]
  \label{thm_Acase3}
  If $n\geq3$ then $E_\oiso^1\not\leq_BE_\oiso^{n\ord-1}$.
\end{thm}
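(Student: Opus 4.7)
The plan is to observe that this case reduces quickly to Theorem~\ref{thm_Bpart1'} applied with $k=1$, by exploiting the special behavior at the complementary rank $k=n-1$. The key point is that when $k=n-1$, quasi-isomorphism and isomorphism on $R(n,p,n-1)$ coincide by \cite[Theorem 4.4]{plocal}, so the equivalence relation $E_\oiso^{n-1}$ collapses down to a genuine group-action orbit relation, which can then be transported to $Gr_1\QQ_p^n$.

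More precisely, first I would chain together the following Borel bireducibilities:
\[ E_\oiso^{n-1} \;\sim_B\; \oiso_{n,p}^{n-1} \;=\; \oqiso_{n,p}^{n-1} \;\sim_B\; E_{GL_n\QQ}^{n-1} \;\sim_B\; E_{GL_n\QQ}^{1}. \]
The first equivalence is Lemma~\ref{lem_containments}(b); the middle equality is the remark made immediately after the statement of Theorem~B, citing \cite[Theorem 4.4]{plocal}; the third equivalence is the bireducibility of $\oqiso_{n,p}^{n-1}$ with $E_{GL_n\QQ}^{n-1}$ recalled at the beginning of Section~4 (and again in the reduction of Theorem~\ref{thm_Bpart1} to Theorem~\ref{thm_Bpart1'}); and the last is witnessed by the orthogonal complement map $V\mapsto V^\bot$, which is a Borel bijection $Gr_{n-1}\QQ_p^n\to Gr_1\QQ_p^n$ intertwining the $GL_n(\QQ)$-action through $g\mapsto g^{-T}$.

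With this chain in hand, the theorem follows at once. Indeed, any Borel reduction from $E_\oiso^1$ to $E_\oiso^{n-1}$ composes with the above chain to produce a Borel reduction from $E_\oiso^1$ to $E_{GL_n\QQ}^1$, and since $n\geq 3$ forces $1\leq n-2$, this directly contradicts Theorem~\ref{thm_Bpart1'} applied with $k=1$. There is no serious obstacle in the present case: all the substantive work (the superrigidity plus the type-based contradiction using $g=\diag(1,\ldots,1,p)$) was already carried out in the proof of Theorem~\ref{thm_Bpart1'}, and the role of the present argument is simply to cross the ``$\oiso=\oqiso$'' bridge available at $k=n-1$ and then pass through the orthogonal complement bijection.
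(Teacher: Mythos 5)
Your proposal is correct and follows essentially the same route as the paper: use \cite[Theorem 4.4]{plocal} to identify isomorphism with quasi-isomorphism at divisible rank $n-1$, pass through the bireducibilities with $E_{GL_n\QQ}^{n-1}$ and (via the orthogonal complement map) with $E_{GL_n\QQ}^1$, and then invoke the $k=1$ case of the non-reducibility $E_\oiso^k\not\leq_B E_{GL_n\QQ}^k$ (Theorem~\ref{thm_Bpart1'}), which applies since $n\geq3$ gives $1\leq n-2$. The only cosmetic difference is that you cite Theorem~\ref{thm_Bpart1'} directly while the paper cites Theorem B, part 1, but these amount to the same thing.
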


\begin{proof}
  By Theorem 4.4 of \cite{plocal}, for groups $A,B\in R(n,p,n\ord-1)$
  we have that $A$ is quasi-isomorphic to $B$ iff $A$ is isomorphic to
  $B$.  In particular, $\oiso_{n,p}^{n\ord-1}$ is Borel bireducible
  with $\oqiso_{n,p}^{n\ord-1}$, and it follows that
  $E_{\oiso}^{n\ord-1}$ is Borel bireducible with
  $E_{GL_n\QQ}^{n\ord-1}$.  Again using the orthogonal complement map,
  $E_{GL_n\QQ}^{n\ord-1}$ is Borel bireducible with $E_{GL_n\QQ}^1$,
  and so we have established that the right-hand side
  $E_\oiso^{n\ord-1}$ is Borel bireducible with $E_{GL_n\QQ}^1$.
  Hence, the result follows from Theorem B, part 1.
\end{proof}

\bibliographystyle{alphanum}
\begin{singlespace}
  \bibliography{cber}

\begin{thebibliography}{Tho3}

\bibitem[Ada]{adams}
Scot Adams.
\newblock Containment does not imply {B}orel reducibility.
\newblock {\em DIMACS series on discrete mathematics and theoretical computer
  science}, 58, 2002.

\bibitem[AK]{adamskechris}
Scot Adams and Alexander Kechris.
\newblock Linear algebraic groups and countable {B}orel equivalence relations.
\newblock {\em Journal of the American mathematical society}, 13(4):909--943,
  2000.

\bibitem[FM]{feldmanmoore}
Jacob Feldman and Calvin~C. Moore.
\newblock Ergodic equivalence relations, cohomology, and von {N}eumann algebras
  {I}.
\newblock {\em Transactions of the {A}merican mathematical society},
  234(2):289--324, 1977.

\bibitem[FS]{friedman}
Harvey Friedman and Lee Stanley.
\newblock A borel reducibility theory for classes of countable structures.
\newblock {\em Journal of symbolic logic}, 54(3):894--914, 1989.

\bibitem[Fuc]{fuchs}
Laszlo Fuchs.
\newblock {\em Infinite abelian groups}.
\newblock Academic press, 1973.

\bibitem[Hjo]{hjorth}
Greg Hjorth.
\newblock Around nonclassifiability for countable torsion free abelian groups.
\newblock In {\em Abelian groups and modules: {I}nternational conference in
  {D}ublin}, pages 269--292. Birkh\"auser, 1998.

\bibitem[HK]{hjorthkechris}
Greg Hjorth and Alexander~S. Kechris.
\newblock Borel equivalence relations and classifications of countable models.
\newblock {\em Annals of pure and applied logic}, 82(3):221--272, 1996.

\bibitem[HT]{hjorththomas}
Greg Hjorth and Simon Thomas.
\newblock The classification problem for $p$-local torsion-free abelian groups
  of rank two.
\newblock {\em Journal of mathematical logic}, 6(2):233--251, 2006.

\bibitem[Ioa]{ioana}
Adrian Ioana.
\newblock {\em Some rigidity results in the orbit equivalence theory of
  non-amenable groups}.
\newblock PhD thesis, UCLA, 2007.

\bibitem[Lub]{lubotzky}
Alex Lubotzky.
\newblock {\em On property $(\tau)$}.
\newblock Unpublished manuscript, 2003.

\bibitem[Mar]{margulis}
Gregori~A. Margulis.
\newblock {\em Discrete subgroups of semisimple {L}ie groups}.
\newblock Number~3 in Ergebnisse der mathematik und ihrer grenzgebiete.
  Springer, 1991.

\bibitem[Ser]{serre}
Jean-Pierre Serre.
\newblock {\em Trees}.
\newblock Springer, 1980.

\bibitem[Tho1]{plocal}
Simon Thomas.
\newblock The classification problem for $p$-local torsion-free abelian groups
  of finite rank.
\newblock {\em Unpublished preprint}, 2002.

\bibitem[Tho2]{torsionfree}
Simon Thomas.
\newblock The classification problem for torsion-free abelian groups of finite
  rank.
\newblock {\em Journal of the {A}merican mathematical society}, 16(1):233--258,
  2003.

\bibitem[Tho3]{super}
Simon Thomas.
\newblock Superrigidity and countable {B}orel equivalence relations.
\newblock {\em Annals of pure and applied logic}, 120(1-3):237--262, 2003.

\bibitem[Zim]{zimmer}
Robert~J. Zimmer.
\newblock {\em Ergodic theory and semisimple groups}.
\newblock Monographs in mathematics. Birkh\"auser, 1984.

\end{thebibliography}
\end{singlespace}

\end{document}